\newcommand\setItemnumber[1]{\setcounter{enum\romannumeral\@enumdepth}{\numexpr#1-1\relax}}
\newcommand{\R}{\mathbb{R}}
\newcommand{\C}{\mathbb{C}}
\newcommand{\e}{\varepsilon}
\newcommand{\beq}{\begin{equation}}
\newcommand{\eeq}{\end{equation}}
\newcommand{\bdef}{\begin{definition}}
\newcommand{\eedef}{\end{definition}}
\newtheorem{theorem}{Theorem}[section]
\newtheorem{lemma}[theorem]{Lemma}
\newtheorem{proposition}[theorem]{Proposition}
\newtheorem{corollary}[theorem]{Corollary}
\newtheorem{definition}[theorem]{Definition}
\newtheorem{claim}[theorem]{Claim}
\theoremstyle{definition}
\newtheorem{remark}[theorem]{Remark}
\newtheorem*{rep@theorem}{\rep@title}
\newcommand{\newreptheorem}[2]{%
\newenvironment{rep#1}[1]{%
 \def\rep@title{#2 \ref{##1}}%
 \begin{rep@theorem}}%
 {\end{rep@theorem}}}
\def\M{{\mathcal M}}
\newcommand{\defeq}{\vcentcolon=}
\long\def\frame#1#2#3#4{\hbox{\vbox{\hrule height#1pt
 \hbox{\vrule width#1pt\kern #2pt
 \vbox{\kern #2pt
 \vbox{\hsize #3\noindent #4}
\kern#2pt}
 \kern#2pt\vrule width #1pt}
 \hrule height0pt depth#1pt}}}
\def\M{{\mathcal M}}
\newcommand\blfootnote[1]{%
	\begingroup
	\renewcommand\thefootnote{}\footnote{#1}%
	\addtocounter{footnote}{-1}%
	\endgroup
}
\colorlet{partnumbercolour}{blue}
\xpatchcmd{\@part}{\partname\nobreakspace\thepart}{\textcolor{partnumbercolour}{\thepart}}{}{}
\title{A Santal\'o inequality for the $L^p$-polar body}
\author{Vlassis Mastrantonis}
\begin{document}
\let\i\undefined
\newcommand{\i}{\sqrt{-1}}
\def\LpK{K^{\circ,p}}
\def\LpS{S^{\circ,p}}
\def\Cov{\mathrm{Cov}}
\numberwithin{equation}{section}

\maketitle

\begin{center}
\textit{Dedicated to Bo Berndtsson on the occasion of his 71\textsuperscript{st} birthday}
\end{center}

\begin{abstract}
In recent work with Berndtsson and Rubinstein, a notion of $L^p$-polarity was introduced, with classical polarity recovered in the limit $p\to\infty$, and $L^1$-polarity closely related to Bergman kernels of tube domains. A Santal\'o inequality for the $L^p$-polar was proved for symmetric convex bodies. The aim of this article is to remove the symmetry assumption. Thus, an $L^p$-Santal\'o inequality holds for any convex body after translation by the $L^p$-Santal\'o point. As a corollary, this yields an optimal upper bound on Bergman kernels of tube domains. The proof is by Steiner symmetrization, but unlike the symmetric case, a careful translation of the body is required before each symmetrization. 
\end{abstract}

\blfootnote{The author is grateful to Y.A. Rubinstein for his support and guidance, and to an anonymous referee for useful comments. 
Research supported through NSF grants DMS-1906370, 2204347, a Hauptman summer research award and a graduate school summer research fellowship at the University of Maryland.}

\section{Introduction}
In this article, we prove a Santal\'o inequality for the $L^p$-polar body, recently introduced by Berndtsson, Rubinstein, and the author \cite{BMR}. For $p\in (0,\infty),$ the \textit{$L^p$-support function} \cite[Remark 36]{MR} \cite[(1.8)]{BMR},
\begin{equation}\label{hpKdef}
    h_{p,K}(y)\defeq \log\left( \int_{K}e^{p\langle x,y\rangle}\frac{\dif x}{|K|}\right)^{\frac1p}
\end{equation}
of a convex body $K$,
generalizes the classical support function 
\begin{equation}\label{hKDef}
    h_{K}(y)\defeq \sup_{x\in K}\langle x,y\rangle,
\end{equation}
which is obtained by \eqref{hpKdef} for $p=\infty$.
Indeed, $h_{p,K}$ increases in $p$ \cite[Lemma 2.2 (v)]{BMR} and converges pointwise to \eqref{hKDef}, thus $h_{\infty, K}\defeq \lim_{p\to\infty}h_{p,K}= h_K$ \cite[Corollary 2.7]{BMR}. In contrast to $h_K$, for finite $p$, $h_{p,K}$ is not homogeneous. Nonetheless, it is always convex \cite[Lemma 2.1]{BMR} thus, by a theorem of Ball \cite[Theorem 5]{ball2} (also Klartag in the non-even case \cite[Theorem 2.2]{klartag}), it induces a near norm (that is, non-negative and sub-additive, but only positively 1-homogeneous: $\|\lambda y\|= \lambda\|y\|$ for $\lambda\geq 0$)
\begin{equation}\label{KcircpNorm}
    \|y\|_{K^{\circ,p}}\defeq \left( \frac{1}{(n-1)!}\int_0^\infty r^{n-1} e^{-h_{p,K}(ry)}\dif r\right)^{-\frac1n}. 
\end{equation}
The \textit{$L^p$-polar} is the induced convex body \cite[Definition 1.1]{BMR}
\begin{equation*}
    K^{\circ,p}\defeq \{y\in \R^n: \|y\|_{K^{\circ,p}}\leq 1\}. 
\end{equation*}
By the homogeneity of $h_K$, $\|y\|_{K^{\circ,\infty}}= h_K(y)$, thus \begin{equation*}
    K^{\circ, \infty}= K^{\circ}\defeq \{y\in \R^n: \langle x,y\rangle\leq 1, \text{ for all } x\in K\}, 
\end{equation*}
coincides with the polar of $K$.

The \textit{$L^p$-Mahler volume} is defined via \cite[(1.9), Theorem 1.2]{BMR}
\begin{equation}\label{MpEq}
    \M_p(K)\defeq |K|\int_{\R^n} e^{-h_{p,K}(y)}\dif y= n! |K| |K^{\circ,p}|.
\end{equation}
Denote by 
\begin{equation*}
    B_2^n\defeq \{x\in \R^n: x_1^2+ \ldots+ x_n^2\leq 1\}.
\end{equation*}
Santal\'o's inequality \cite[(3.12)]{santalo} was generalized to the $L^p$-polar for symmetric convex bodies. 
\begin{theorem}{\upshape\cite[Theorem 1.6]{BMR}}
\label{SantaloSym}
    Let $p\in (0,\infty]$. For a symmetric convex body $K\subset \R^n$, $\M_p(K)\leq \M_p(B_2^n)$.
\end{theorem}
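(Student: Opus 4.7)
My plan is to prove this by Steiner symmetrization. For a symmetric convex body $K$ and a unit vector $u$, let $S_u K$ denote the Steiner symmetrization of $K$ across $u^\perp$. Standard facts give $|S_u K| = |K|$ and, since $K = -K$, also $S_u K = -(S_u K)$. The problem then reduces to a key lemma that Steiner symmetrization does not decrease the $L^p$-polar volume,
\[
|K^{\circ,p}| \le |(S_u K)^{\circ,p}|.
\]
Granted this, iterating along a suitable sequence of directions produces a sequence of bodies converging in Hausdorff metric to the Euclidean ball $rB_2^n$ with $|rB_2^n| = |K|$; continuity of $\M_p$ in the Hausdorff metric (immediate from \eqref{MpEq} and continuity of $h_{p,K}$ in $K$) together with its scale invariance then give $\M_p(K) \le \M_p(rB_2^n) = \M_p(B_2^n)$.

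For the key lemma I would work in the fibration along $u = e_n$. Writing $x = (x', x_n) \in \R^{n-1}\times\R$ and letting the fiber of $K$ over $x'$ be $[a(x'), b(x')]$ with midpoint $m(x') = (a+b)/2$ and width $w(x') = b-a$, Steiner symmetrization replaces each fiber by $[-w/2,w/2]$. The factorization $\int_a^b e^{pty_n}\,dt = e^{py_n m}\cdot\frac{2\sinh(py_n w/2)}{py_n}$ and Fubini yield
\[
|K|\,e^{p h_{p,K}(y)} = \int_{\R^{n-1}} e^{p\langle x', y'\rangle + py_n m(x')}\,\psi(x', y_n)\,dx', \qquad \psi(x', y_n) \defeq \frac{2\sinh(py_n w(x')/2)}{py_n} \ge 0,
\]
and the analogous identity for $S_u K$ has $m \equiv 0$. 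Since $K=-K$ forces $m$ to be odd and $w$ to be even in $x'$, substituting $x'\mapsto -x'$ and averaging replaces the $x'$-integrand by $\cosh(p\langle x', y'\rangle + py_n m(x'))\,\psi$ for $K$ and $\cosh(p\langle x', y'\rangle)\,\psi$ for $S_u K$. Using $|K^{\circ,p}| = \frac{1}{n!}\int_{\R^n} e^{-h_{p,K}(y)}\,dy$ from \eqref{MpEq}, the key lemma reduces to the integral inequality
\[
\int_{\R^n}\!\left(\int_{\R^{n-1}}\cosh(p\langle x', y'\rangle + py_n m(x'))\,\psi\,dx'\right)^{-1/p}\!dy \;\le\; \int_{\R^n}\!\left(\int_{\R^{n-1}}\cosh(p\langle x', y'\rangle)\,\psi\,dx'\right)^{-1/p}\!dy.
\]

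The main obstacle is this integral inequality. The identity $\cosh(A+B) + \cosh(A-B) = 2\cosh A\cosh B \ge 2\cosh A$, applied with $A = p\langle x', y'\rangle$ and $B = py_n m(x')$ and combined with pairing $y_n \leftrightarrow -y_n$ on the left-hand side, produces an averaged comparison of the two inner integrals; however, inserting this into the convex map $t\mapsto t^{-1/p}$ naively points in the wrong direction. The correct resolution likely requires a Pr\'ekopa--Leindler argument along $y_n$-fibers, exploiting the log-concavity of $y\mapsto e^{-h_{p,K}(y)}$ coming from convexity of $h_{p,K}$ \cite[Lemma 2.1]{BMR}; alternatively, one could bypass the slice analysis altogether by invoking the functional Santal\'o inequality of Artstein-Avidan--Klartag--Milman for even log-concave functions, applied to $e^{-h_{p,K}}$ and combined with Ball's identity for $|K^{\circ,p}|$.
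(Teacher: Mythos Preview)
Your overall architecture---Steiner symmetrization increases $|K^{\circ,p}|$ for symmetric $K$, then iterate and pass to the limit---is exactly the paper's strategy (see the Remark after the proof of Theorem~\ref{LpSantaloThm}). The gap is the key lemma itself: you reduce it to an integral inequality and then stop, correctly observing that the naive Jensen step goes the wrong way. Neither of your suggested fixes is carried out, and the functional Santal\'o route is not obviously available, since the Legendre dual of $h_{p,K}$ has no simple description in terms of $K$.

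The paper proves the key lemma by an entirely different mechanism. Rather than comparing $\int e^{-h_{p,K}}$ and $\int e^{-h_{p,\sigma_u K}}$ directly, it establishes a \emph{pointwise} inequality between the support functions (Lemma~\ref{technicalLemma}):
\[
h_{p,\sigma_{e_n}K}\bigl(\gamma((1-\tau)\xi+\tau\xi',r)\bigr)\le \tfrac{(1-\tau)\beta}{\tau\alpha+(1-\tau)\beta}\,h_{p,K}(\alpha(\xi,t))+\tfrac{\tau\alpha}{\tau\alpha+(1-\tau)\beta}\,h_{p,K}(\beta(\xi',-s))
\]
whenever $\frac2r=\frac1t+\frac1s$ and $\frac1\gamma=\frac{1-\tau}{\alpha}+\frac{\tau}{\beta}$. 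This is obtained from your same $\sinh$ factorization by applying H\"older's inequality in the $\eta$-variable together with the log-convexity of $t\mapsto \sinh(tx)/t$; the term $e^{p\gamma((1-\tau)t-\tau s)\frac{f+g}{2}}$ carrying the midpoint vanishes because $(1-\tau)t=\tau s$. The passage from this support-function inequality to a norm inequality on $K^{\circ,p}$, and thence to a slice inclusion $\frac{s}{t+s}K^{\circ,p}(t)+\frac{t}{t+s}K^{\circ,p}(-s)\subset(\sigma_{e_n}K)^{\circ,p}(r)$, requires Ball's Brunn--Minkowski inequality for harmonic means (Theorem~\ref{BallIneq}) applied with $q=n$. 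Brunn--Minkowski on the slices and a second application of Ball's theorem with $q=1$ then give $|(\sigma_u K)^{\circ,p}|\ge 4\lambda(1-\lambda)|K^{\circ,p}|$, which for symmetric $K$ (so $\lambda=\tfrac12$) is exactly your key lemma. The missing idea in your attempt is this use of Ball's theorem to mediate between $h_{p,K}$ and $\|\cdot\|_{K^{\circ,p}}$; without it the non-homogeneity of $h_{p,K}$ blocks any direct route.
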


The main result of the present paper is to remove the symmetry assumption:
\begin{theorem}
\label{LpSantaloThm}
Let $p\in (0,\infty]$. For a convex body $K\subset \R^n$, 
\begin{equation}\label{LpSantaloIneq}
    \inf_{x\in \R^n}\M_p(K-x)\leq \M_p(B_2^n).
\end{equation}
\end{theorem}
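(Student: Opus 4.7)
\noindent The plan is to argue by iterated Steiner symmetrization with a carefully chosen translation at each step, converging to a Euclidean ball (the case $p = \infty$ is the classical Blaschke--Santal\'o inequality, so one focuses on $p < \infty$). First I would establish the existence of an $L^p$-Santal\'o point. Since $h_{p,K-x}(y) = h_{p,K}(y) - \langle x, y\rangle$, the function $\Psi_K(x) := \M_p(K-x) = |K|\int e^{-h_{p,K}(y) + \langle x, y\rangle}\, dy$ is log-convex in $x$ by H\"older. Using the Laplace asymptotic $h_{p,K}(ry)/r \to h_K(y)$ as $r \to \infty$, $\Psi_K$ is finite exactly on $\operatorname{int}(K)$ and tends to infinity on $\partial K$, so it attains a unique minimum at an interior point $s_p(K)$, characterized by the barycenter condition $\int_{\R^n} y\, e^{-h_{p,K-s_p(K)}(y)}\, dy = 0$. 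The map $K \mapsto s_p(K)$ is equivariant under translations and orthogonal transformations, and continuous in Hausdorff distance.

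The heart of the proof is the symmetrization lemma: for every convex body $L$ and every hyperplane $H$ through the origin, there exists $\tau \in H^\perp$ with $\M_p(L - \tau) \leq \M_p(S_H L)$, where $S_H L$ is the Steiner symmetrization of $L$ across $H$. This is where the ``careful translation'' of the abstract enters. Granting the lemma and applying it to $L - v$ for each $v \in H$, together with the identity $S_H(L - v) = S_H L - v$, gives
$$
\inf_{w \in \R^n} \M_p(L - w) \;\leq\; \inf_{v \in H} \M_p(S_H L - v) \;=\; \inf_{w \in \R^n} \M_p(S_H L - w),
$$
the last equality because $S_H L$ is symmetric about $H$, forcing $s_p(S_H L) \in H$. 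Thus $L \mapsto \inf_w \M_p(L - w)$ is nondecreasing under Steiner symmetrization. To prove the lemma, take coordinates with $H = e_n^\perp$ and describe $L$ over its projection $\pi_H(L)$ by its $e_n$-fiber $[\phi(x'), \psi(x')]$, with midpoint $m(x') = (\phi + \psi)/2$ and width $\ell(x') = \psi - \phi$. Fubini yields
$$
\int_L e^{p\langle x, y\rangle}\, dx \;=\; \int_{\pi_H L} e^{p\langle x', y'\rangle + p m(x') y_n}\, \frac{2\sinh(p\ell(x') y_n/2)}{p y_n}\, dx',
$$
so that $S_H L$ corresponds to $m \equiv 0$, while translating $L$ by $t e_n$ replaces $m$ by $m - t$ uniformly. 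The lemma thereby reduces to finding a single constant $t$ for which the resulting integrand, raised to $-1/p$ and integrated in $y$, is dominated by the $m \equiv 0$ counterpart $\M_p(S_H L)$.

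The main obstacle is exactly this last reduction: the natural candidate for $t$ would be a weighted average of $m(x')$, but the weights depend on $y$, so $t$ cannot be pulled out of the $y$-integral. I would attempt to produce the required constant by applying Pr\'ekopa--Leindler to an auxiliary log-concave function on $\pi_H(L) \times \R$, upgrading the pointwise (in $y$) Jensen bound to one with a $y$-independent $t$, paralleling Meyer--Pajor's classical argument at $p = \infty$; an alternative would be to derive the lemma from the functional Santal\'o inequality of Fradelizi--Meyer after representing $\M_p(K - s_p(K))$ as a Santal\'o product of a log-concave function against its Legendre-type dual. With the lemma in hand, iterating along a sequence of hyperplanes that drives $K$ to the Euclidean ball $r B_2^n$ of volume $|K|$ in the Hausdorff metric, and invoking Hausdorff continuity of $K \mapsto \inf_w \M_p(K - w)$ (from continuity of $|K|$, uniform convergence of $h_{p,K}$ on compacta, and uniform integrable tail bounds on $e^{-h_{p,K}}$), yields $\inf_w \M_p(K - w) \leq \M_p(r B_2^n) = \M_p(B_2^n)$, the final equality by the scale-invariance of $\M_p$.
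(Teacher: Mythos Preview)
Your overall architecture is correct and matches the paper: iterate Steiner symmetrization, translating carefully at each step so that $\inf_w \M_p(\cdot - w)$ is nondecreasing, and you have correctly identified that $s_p(S_H L)\in H$ forces the infimum over $\R^n$ to equal the infimum over $H$. Your Fubini computation with the $\sinh$ factor is also exactly what the paper uses. However, there is a genuine gap precisely where you flag it, and your proposed fixes (Pr\'ekopa--Leindler on an auxiliary function, or functional Santal\'o) are not the mechanism that actually works.

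The paper's resolution is different in kind from what you sketch. The translation $t$ in the $e_n$-direction is \emph{not} chosen to make a pointwise-in-$y$ bound hold; instead it is chosen so that the hyperplane $e_n^\perp$ cuts the $L^p$-\emph{polar} $(L-te_n)^{\circ,p}$ into two pieces of equal volume. That such a $t$ exists is a separate intermediate-value argument (one shows that as $t$ ranges over the interval where $0\in\mathrm{int}(L-te_n)$, the ratio of the two half-volumes of the polar runs continuously from $0$ to $\infty$, which in turn requires a nontrivial finiteness claim for the half-volume on the ``correct'' side even when $0$ reaches $\partial L$). Once $e_n^\perp$ $\tfrac12$-separates the polar, the inequality $|(\sigma_{e_n}L)^{\circ,p}|\geq |(L-te_n)^{\circ,p}|$ is obtained not by comparing the $y$-integrals directly but by passing through the \emph{near-norm} description of $K^{\circ,p}$: one proves a harmonic-mean inequality for $h_{p,K}$ along rays (via H\"older and the log-convexity of $r\mapsto \sinh(rx)/r$), feeds it into Ball's Brunn--Minkowski inequality for harmonic means to get a norm inequality, deduces a slice inclusion $\tfrac{s}{t+s}K^{\circ,p}(t)+\tfrac{t}{t+s}K^{\circ,p}(-s)\subset(\sigma_{e_n}K)^{\circ,p}(r)$ for $\tfrac{2}{r}=\tfrac1t+\tfrac1s$, and then applies Ball's theorem once more to the $(n-1)$-dimensional slice volumes. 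Your ``find a single $t$ that beats $m\equiv 0$ in the $y$-integral'' formulation bypasses the polar body entirely, and it is not clear any averaging argument can produce such a $t$; the paper's route through $K^{\circ,p}$ and Ball's theorem is what makes the constant $4\lambda(1-\lambda)$ appear and equal $1$ exactly at $\lambda=\tfrac12$.

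A minor point on the endgame: rather than symmetrizing to convergence and invoking Hausdorff continuity, the paper performs exactly $n$ symmetrizations along $e_1,\dots,e_n$ (each preserving the previous coordinate symmetries), arriving at an origin-symmetric body after finitely many steps, and then quotes the already-established symmetric case. This avoids the continuity argument altogether.
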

 
Theorem \ref{LpSantaloThm} is a strengthening of Theorem \ref{SantaloSym} because the infimum on the left-hand side of \eqref{LpSantaloIneq} is attained at the origin for symmetric convex bodies. 
As we show in our previous work,
the infimum in \eqref{LpSantaloIneq} is always a minimum attained at a unique point that lies in the interior of $K$. 
\begin{definition}{\upshape\cite[Proposition 1.5]{BMR}}\label{LpSantaloPointDef}
    Let $p\in (0,\infty]$, and $K\subset \R^n$ a convex body. Denote by $s_p(K)$ the unique point in $\mathrm{int}\,K$ for which 
    \begin{equation}\label{tMpDef}
    \M_p(K-s_p(K))= \inf_{x\in \R^n} \M_p(K-x),
    \end{equation}
    called the $L^p$-Santal\'o point of $K$.
\end{definition}
The $L^p$-Santal\'o point is uniquely characterized by the vanishing of the barycenter of $h_{p,K}$.
Denote the \textit{barycenter} of a convex body $K$ by
\begin{equation*}
    b(K)\defeq \int_K x \frac{\dif x}{|K|}. 
\end{equation*}
For a convex function $\phi: \R^n\to \R\cup\{\infty\}$, denote by 
\begin{equation*}
    V(\phi)\defeq \int_{\R^n} e^{-\phi(x)}\dif x, \quad \text{ and } \quad b(\phi)\defeq \int_{\R^n}x e^{-\phi(x)}\frac{\dif x}{V(\phi)},
\end{equation*}
its volume and barycenter respectively. For the convex indicator function of $K$, $\bm{1}_K^\infty$ being $0$ for $x\in K$ and $\infty$ otherwise, $V(\bm{1}_K^\infty)= |K|$ and $b(\bm{1}_K^\infty)=b(K)$. 
\begin{lemma}{\upshape\cite[Proposition 1.5]{BMR}}
\label{LpSPbarycenter}
    Let $p\in(0,\infty]$, and $K\subset \R^n$ a convex body. For $x\in \R^n$, $x= s_p(K)$ if and only if $b(h_{p,K-x})=0$.
\end{lemma}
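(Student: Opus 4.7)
The plan is to rewrite $\M_p(K-x)$ as $|K|$ times a smooth strictly convex function $\Phi(x)$, compute $\nabla\Phi$, and read off the critical-point condition. The starting point is the shift identity
\[
    h_{p, K-x}(y) = h_{p,K}(y) - \langle x,y\rangle, \qquad x,y\in \R^n,\; p\in(0,\infty],
\]
which for finite $p$ follows from \eqref{hpKdef} by substituting $z = w - x$ inside the integral, and for $p=\infty$ is the classical translation rule for support functions. Combined with $|K-x|=|K|$ and \eqref{MpEq},
\[
    \M_p(K-x) = |K|\int_{\R^n} e^{-h_{p,K-x}(y)}\,dy = |K|\int_{\R^n} e^{\langle x,y\rangle - h_{p,K}(y)}\,dy =: |K|\,\Phi(x),
\]
so the lemma reduces to characterizing the critical points of $\Phi$.

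Differentiating under the integral sign would give
\[
    \nabla\Phi(x) = \int_{\R^n} y\, e^{\langle x,y\rangle - h_{p,K}(y)}\,dy = \int_{\R^n} y\, e^{-h_{p,K-x}(y)}\,dy = V(h_{p,K-x})\, b(h_{p,K-x}),
\]
so vanishing of $\nabla\Phi(x)$ is equivalent to vanishing of $b(h_{p,K-x})$, since $V(h_{p,K-x})>0$. Now $\Phi$ is the Laplace transform of the finite, everywhere-positive measure $e^{-h_{p,K}(y)}\,dy$; by Hölder, $\log\Phi$ is strictly convex on the interior of its domain of finiteness, so $\Phi$ admits at most one critical point, and any such critical point is automatically its unique global minimum. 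Definition \ref{LpSantaloPointDef}, quoted from \cite{BMR}, ensures that this minimum does exist and is precisely $s_p(K)\in\mathrm{int}\,K$. Combining, $x=s_p(K)$ if and only if $\nabla\Phi(x)=0$ if and only if $b(h_{p,K-x})=0$, as claimed.

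The only real technical point is justifying the interchange of derivative and integral in a neighborhood of $s_p(K)$. This requires a uniform-in-$x$ integrable majorant for $|y|\,e^{\langle x,y\rangle - h_{p,K}(y)}$, which in turn reduces to the linear growth of $h_{p,K}(y)$ in $|y|$: by the monotonicity $p\mapsto h_{p,K}$ one has $h_{p,K}\geq h_{1,K}$, and since $h_{\infty,K}=h_K$ grows strictly linearly in $|y|$ once $0\in\mathrm{int}\,K$, such bounds are available uniformly when $x$ stays in a small neighborhood of $s_p(K)$. Granted this, everything reduces to the shift formula and to the strict log-convexity of the Laplace transform, neither of which should present a real obstacle.
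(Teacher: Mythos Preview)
The paper does not prove this lemma; it is simply quoted from \cite[Proposition 1.5]{BMR}, so there is no in-paper argument to compare against. Your approach---rewrite $\M_p(K-x)=|K|\Phi(x)$ via the shift identity, differentiate the Laplace-type integral, and invoke strict log-convexity---is the standard one and is essentially what appears in \cite{BMR}.

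One genuine slip in your integrability paragraph: you write ``by the monotonicity $p\mapsto h_{p,K}$ one has $h_{p,K}\geq h_{1,K}$,'' but monotonicity in $p$ gives this only for $p\ge 1$; for $p\in(0,1)$ the inequality reverses. Moreover, bounding $h_{p,K}$ from below by $h_{1,K}$ and then appealing to linear growth of $h_{\infty,K}=h_K$ does not connect: $h_K$ is an \emph{upper} bound for every $h_{p,K}$, not a lower one. The linear lower bound you actually need is supplied by Lemma~\ref{hKhpKApprox} (applied after translating so the barycenter is at the origin), which yields $h_{p,K}(y)\ge \lambda\, h_K(y)+\tfrac{n}{p}\log(1-\lambda)$ for any $\lambda\in(0,1)$, hence linear growth of $h_{p,K-x}$ whenever $x\in\mathrm{int}\,K$. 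With that fix, the dominated-convergence step is justified uniformly on compact subsets of $\mathrm{int}\,K$, and the remainder of your argument is correct.
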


For $p=\infty$, Theorem \ref{LpSantaloThm} is due to Santal\'o \cite[(3.12)]{santalo} and, in fact, $s_\infty(K)$ is the Santal\'o point of $K$. The maximzers were uniquely characterized as ellipsoids (affine images of $B_2^n$) by Petty \cite[Theorem 3.2]{petty}. 
In future work we hope to characterize the equality case for finite $p$.
A proof using Steiner symmetrization was later given by Saint-Raymond for symmetric convex bodies \cite{saint-raymond}. Meyer--Pajor adapted the proof to any convex body \cite{meyer-pajor}. Our proof is mainly inspired by their work.

It is worthwhile to make a comparison between our result and the Santal\'o inequality for the $L^p$-centroid bodies due to Lutwak--Zhang \cite{LutwakZhang}. The $L^p$-centroid body of a convex body is the symmetric convex body $\Gamma_pK$ with support function 
\begin{equation*}
    h_{\Gamma_pK}(y)\defeq \left( \int_{K}|\langle x,y\rangle|^p \frac{dx}{|K|} \right)^{\frac1p}. 
\end{equation*}
Lutwak--Zhang proved that, among convex bodies, the product $|K||(\Gamma_pK)^\circ|$ is maximized by ellipsoids centered at the origin \cite[Theorem B]{LutwakZhang}. 
Their proof follows a similar line of reasoning, namely that Steiner symmetrization increases the volume product \cite[Lemma 3.2]{LutwakZhang}.
Nonetheless, we are not aware of a direct relation between the $L^p$-polar $K^{\circ,p}$ and the $L^p$-centroid $\Gamma_pK$, despite the fundamental differences \cite[Remark 1.7]{BMR}. An important hurdle in the analysis of the $L^p$-polar is that 
we do not know of a useful description of its support function; the $L^p$-support function, even though the defining function, is seemingly unrelated to its support function. To surpass this hurdle, we rely on a theorem of Ball (Theorem \ref{BallIneq}) connecting the $L^p$-support to the near-norm of the $L^p$-polar. 

Finally, Theorem \ref{LpSantaloThm} can be used to obtain an upper bound on the Bergman kernels of tube domains evaluated on the diagonal. The connection between $\M$ and Bergman kernels goes back to Nazarov \cite{nazarov}, B\l{}ocki \cite{blocki2}, and Berndtsson \cite{berndtsson2} who used Bergman kernels to obtain lower bounds on $\M$. 
For a convex body $K\subset \R^n$, let
\begin{equation*}
    T_K\defeq \R^n+ \i \,\mathrm{int}\, K, 
\end{equation*}
and
\begin{equation*}
    A^2(T_K)\defeq \{f: T_K\to \C: \text{ holomorphic with } \int_{T_K}|f(z)|^2\dif\lambda(z)<\infty\},
\end{equation*}
be the Bergman space of $T_K$, 
where $\lambda$ denotes the Lebesgue measure. 
As observed in our previous work,
$\M_1$ and the Bergman kernel of $A^2(T_K)$ evaluated at the diagonal are related by the following \cite[(42)]{MR}, 
\begin{equation*}
    \M_1(K-x)= (4\pi)^n |K|^2 \sup_{\substack{f\in A^2(T_{K})\\ f\not\equiv 0}} \frac{|f(\i x)|^2}{\|f\|^2_{L^2(T_K)}}, \quad \text{ for all } x\in \mathrm{int}\, K. 
\end{equation*}
Therefore, by Theorem \ref{LpSantaloThm} for $p=1$, we obtain the following estimate. 
\begin{corollary}
Let $K\subset \R^n$ be a convex body. For $f\in A^2(T_K)$,
\begin{equation*}
    |f(\i s_1(K))|^2\leq \frac{\M_1(B_2^n)}{(4\pi)^n |K|^2} \int_{T_K}|f(z)|^2\dif\lambda(z).
\end{equation*}
\end{corollary}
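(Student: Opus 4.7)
The plan is a direct combination of Theorem \ref{LpSantaloThm} at the distinguished value $p=1$ with the Bergman-kernel identity
$$\M_1(K-x) = (4\pi)^n |K|^2 \sup_{\substack{f \in A^2(T_K) \\ f \not\equiv 0}} \frac{|f(\i x)|^2}{\|f\|^2_{L^2(T_K)}}$$
recalled just before the statement, which holds for every $x \in \mathrm{int}\, K$. The reason $p=1$ is the relevant value is precisely that this is where the $L^p$-Mahler volume becomes proportional to the on-diagonal Bergman kernel of the tube domain $T_K$; no such identity is available for other $p$.

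First I would specialize $x = s_1(K)$ in the identity above. By Definition \ref{LpSantaloPointDef}, the $L^1$-Santal\'o point lies in $\mathrm{int}\, K$, so the identity applies at this point. The left-hand side becomes $\M_1(K - s_1(K))$, which by Theorem \ref{LpSantaloThm} is bounded above by $\M_1(B_2^n)$. Dividing through by the positive factor $(4\pi)^n |K|^2$ yields
$$\sup_{\substack{f \in A^2(T_K) \\ f \not\equiv 0}} \frac{|f(\i s_1(K))|^2}{\|f\|^2_{L^2(T_K)}} \leq \frac{\M_1(B_2^n)}{(4\pi)^n |K|^2}.$$

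The stated inequality is then obtained by clearing the supremum for an arbitrary nonzero $f \in A^2(T_K)$; the case $f \equiv 0$ is trivial. There is no substantive obstacle in this deduction: the entire analytic content of the corollary is transported from Theorem \ref{LpSantaloThm} through the Bergman-kernel identity, and the only thing to verify in the passage is that the insertion of $x = s_1(K)$ is legitimate, which is guaranteed by the interior location of the $L^1$-Santal\'o point.
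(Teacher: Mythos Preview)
Your proposal is correct and follows exactly the argument the paper gives: specialize the Bergman-kernel identity at $x=s_1(K)\in\mathrm{int}\,K$, invoke Theorem \ref{LpSantaloThm} with $p=1$ to bound $\M_1(K-s_1(K))\leq\M_1(B_2^n)$, and rearrange. There is nothing to add or correct.
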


\bigskip
\noindent
\textbf{Organization}
In \S \ref{proofOutline}, an outline of the proof of Theorem \ref{LpSantaloThm} is given, stating without proof a proposition and three lemmas necessary for its proof. In \S \ref{steinerSection}, a few facts about Steiner symmetrization are recalled. \S \ref{pSantaloSection} is dedicated to demonstrating that the $L^p$-Santal\'o point (Definition \ref{LpSantaloPointDef}) of a convex body symmetric with respect to hyperplane lies on the hyperplane (Lemma \ref{LpSymSP}). In \S \ref{separatingSection}, starting with a convex body $K$ that contains the origin in the interior, it is shown that it can be translated in any direction $u$ so that $u^\perp$ $\lambda$-separates (Definition \ref{separatingDef}) the $L^p$-polar (Lemma \ref{SeparatingLemma}).
A necessary generalization of a theorem of Ball is proved in \S \ref{BallSection}. 
In \S \ref{SteinerMonotonicitySection}, it is shown that starting with a convex body $K$ and a direction $u$ so that $u^\perp$ $1/2$-separates $K^{\circ,p}$, then Steiner symmetrization increases the $L^p$-Mahler volume (Lemma \ref{VolumeLemma}). An essential step in the proof of Lemma \ref{VolumeLemma}, an inequality between the $L^p$-support functions of $K$ and its Steiner symmetrization (Lemma \ref{technicalLemma}), is detailed in \S \ref{SupportIneqSection}. Finally, Theorem \ref{LpSantaloThm} is proved in \S \ref{FinishingSection}.

\section{Inequality}
\label{inequalitySection}
\subsection{Outline of the proof of Theorem \ref{LpSantaloThm}}\label{proofOutline}

The proof of Theorem \ref{LpSantaloThm} is by Steiner symmetrization. For a unit vector $u\in\partial B_2^n$, denote by
\begin{equation*}
    u^\perp\defeq \{x\in\R^n: \langle x,u\rangle=0\},
\end{equation*}
the hyperplane through the origin perpendicular to $u$.
Let, also, 
\begin{equation*}
    \pi_{u^\perp}: \R^n\ni x\mapsto x-\langle x,u\rangle u\in u^\perp, 
\end{equation*}
the projection onto $u^\perp$. 
\begin{definition}
For a convex body $K\subset \R^n$ and $u\in\partial B_2^n$, the Steiner symmetral of $K$ in the $u$ direction is given by 
\begin{equation*}
    \sigma_uK\defeq \{x+tu\in \R^n: x\in \pi_{u^\perp}(K)\text{ and } |t|\leq \frac12 |K\cap (x+\R u)|\}. 
\end{equation*}
\end{definition}
We know that Steiner symmetrization always increases the $L^p$-Mahler volume of a symmetric convex body \cite[Proposition 5.1]{BMR}. This is not true without the symmetry assumption. Indeed, recall the following. 
\begin{lemma}{\upshape\cite[Lemma 4.2]{BMR}}
\label{finitenessLemma}
    Let $p\in (0,\infty]$. For a convex body $K\subset \R^n$, $\M_p(K)$ is finite if and only if $0\in\mathrm{int}\,K$. 
\end{lemma}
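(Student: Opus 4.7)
I start from the two expressions $\M_p(K)=|K|\int_{\R^n}e^{-h_{p,K}(y)}\,\dif y=n!|K||\LpK|$ in \eqref{MpEq}; since $|K|$ is always a finite positive number, $\M_p(K)<\infty$ is equivalent to $|\LpK|<\infty$, which in turn is equivalent to the integrability of $e^{-h_{p,K}}$. The only facts about $h_{p,K}$ that I will use are its convexity \cite[Lemma 2.1]{BMR}, the vanishing $h_{p,K}(0)=0$ (visible from the defining integral \eqref{hpKdef}), and the pointwise bound $h_{p,K}\leq h_K$ \cite[Lemma 2.2(v)]{BMR}.

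\textbf{Sufficiency.} Assuming $0\in\mathrm{int}\,K$, pick $r>0$ with $rB_2^n\subset K$, and bound $\int_K e^{p\langle x,y\rangle}\,\dif x$ from below by the same integral over $rB_2^n$. A standard Laplace-type asymptotic for this ball integral (mass concentrating near $x=r y/|y|$, computed in spherical coordinates) produces a direction-uniform bound of the form
\begin{equation*}
h_{p,K}(y)\geq r|y|- \tfrac{n+1}{2p}\log(1+|y|)- C,
\end{equation*}
with $C=C(n,p,r,|K|)$. This gives essentially exponential decay of $e^{-h_{p,K}}$ in every direction, so the integral $\int_{\R^n} e^{-h_{p,K}(y)}\,\dif y$ converges and $\M_p(K)<\infty$.

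\textbf{Necessity.} I argue the contrapositive. If $0\notin\mathrm{int}\,K$, Hahn--Banach supplies a unit vector $y_0$ with $\langle x,y_0\rangle\leq 0$ for all $x\in K$, so $h_K(y_0)\leq 0$ and therefore $h_{p,K}(ry_0)\leq h_K(ry_0)=r\,h_K(y_0)\leq 0$ for every $r\geq 0$. By \eqref{KcircpNorm} this forces $\|y_0\|_{\LpK}=0$, so the entire ray $\{ty_0:t\geq 0\}$ sits inside $\LpK$. On the other hand, sub-additivity and positive $1$-homogeneity of $\|\cdot\|_{\LpK}$ imply it is bounded on $\partial B_2^n$ (expand $y$ in the coordinate basis and bound by $\sum|y_i|\max(\|e_i\|_{\LpK},\|-e_i\|_{\LpK})$), so $\LpK$ also contains a ball $\rho B_2^n$ about the origin. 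The convex hull of this ball and the ray is an infinite cone with nonempty interior, which has infinite $n$-dimensional Lebesgue measure; hence $|\LpK|=\infty$ and $\M_p(K)=\infty$.

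\textbf{Main obstacle.} The analytic content is entirely in the sufficiency direction: the pointwise statement $h_{p,K}(ry)\to\infty$ for each $y\neq 0$ is strictly weaker than the direction-uniform linear lower bound on $h_{p,K}$ required for $e^{-h_{p,K}}\in L^1(\R^n)$, and upgrading the inclusion $rB_2^n\subset K$ into such a uniform bound via the Laplace asymptotic on the ball is the only nontrivial estimate in the proof. The necessity direction is purely convex-geometric and needs no asymptotics.
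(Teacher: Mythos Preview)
The paper does not give its own proof of this lemma; it is quoted from \cite[Lemma 4.2]{BMR} and used as a black box. There is therefore nothing in the present paper to compare your argument against.

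That said, your argument is sound in both directions. For necessity, the Hahn--Banach separation combined with $h_{p,K}\le h_K$ to produce a ray of zero near-norm, together with the elementary observation that $\|\cdot\|_{K^{\circ,p}}$ is bounded on $\partial B_2^n$, cleanly forces $|K^{\circ,p}|=\infty$. For sufficiency, your Laplace asymptotic is correct but heavier than needed: once $rB_2^n\subset K$, you can simply bound $\int_K e^{p\langle x,y\rangle}\dif x$ from below by the integral over the small ball $B\big(\tfrac{r}{2}\tfrac{y}{|y|},\tfrac{r}{4}\big)\subset rB_2^n$, on which $\langle x,y\rangle\ge \tfrac{r}{4}|y|$. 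This already yields the uniform linear bound $h_{p,K}(y)\ge \tfrac{r}{4}|y|-C$ with no logarithmic correction, and integrability of $e^{-h_{p,K}}$ is immediate.
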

Therefore,
given an appropriately misplaced convex body $K\subset \R^n$, say with $0\not\in\mathrm{int}\,K$, and $u\in\partial B_2^n$ such that $0\in\mathrm{int}\,\sigma_uK$, then
\begin{equation*}
    \infty= \M_p(K)> \M_p(\sigma_uK).
\end{equation*}

\begin{figure}[H]
    \centering
\begin{tikzpicture}[scale=1]
\draw[thick] (0,2) -- (2,1) -- (-1,1) -- cycle;
\draw[thick] (-1,0) -- (0,.5) -- (2,0) -- (0,-.5) -- cycle;
\draw[->] (-2.5,0) -- (3,0) node[anchor=north west] {x};
\draw[->] (0,-1) -- (0,2.5) node[anchor=north west] {y};
\draw[dotted, thick] (-1,1) -- (-1,0);
\draw[dotted, thick] (0,2) -- (0,-1/2);
\draw[dotted, thick] (2,1) -- (2,0);
\node[above right=1pt] at (1, 1.5){$K$};
\node[below right=1pt] at (1, -.3){$\sigma_{e_2}K$};
\end{tikzpicture}  
    \caption{Steiner symmetrization could potentially decrease $L^p$-Mahler volume.}
    \label{SteinerMahlerDecreaseFig}
\end{figure}
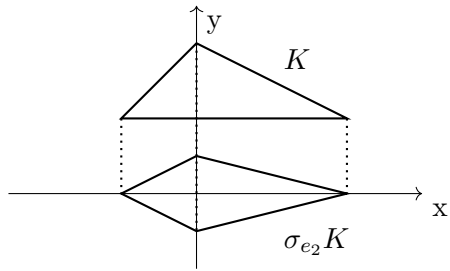

It is, thus, necessary to reposition $K$ before applying Steiner symmetrization. 
Our proof is inspired by the works of Meyer--Pajor \cite{meyer-pajor} and Meyer--Reisner \cite{meyer-reisner}. However, the construction in the proof of their main theorem of Meyer--Pajor \cite[Theorem]{meyer-pajor} cannot be entirely generalized in our setting; duality of the polar body is heavily used, but for finite $p$, $L^p$-polarity is no longer an involution on convex bodies.

Even though Steiner symmetrization does not always increase $\M_p$ (Figure \ref{SteinerMahlerDecreaseFig}), it, nonetheless, increases \eqref{tMpDef}. 
\begin{proposition}
\label{SteinerProp}
Let $p\in (0,\infty]$. For a convex body $K\subset \R^n$ and $u\in\partial B_2^n$, 
$$
    \inf_{x\in \R^n}\M_p(K-x)\leq \inf_{x\in \R^n}\M_p((\sigma_u K)-x).
$$
\end{proposition}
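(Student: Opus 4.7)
The plan is to express both sides of the desired inequality in terms of their optimal translates and then reduce to Lemmas~\ref{SeparatingLemma} and~\ref{VolumeLemma}. Since $\sigma_uK$ is symmetric with respect to the hyperplane $u^\perp$, Lemma~\ref{LpSymSP} shows that $x^\ast\defeq s_p(\sigma_uK)\in u^\perp$, so the right-hand side of Proposition~\ref{SteinerProp} equals $\M_p(\sigma_uK-x^\ast)$. I would also record the elementary translation identities $\sigma_u(K-v)=\sigma_uK-v$ for $v\in u^\perp$ and $\sigma_u(K-su)=\sigma_uK$ for $s\in\R$, both immediate from the facts that $\pi_{u^\perp}(K)$ and the section lengths $|K\cap(x+\R u)|$ are invariant under translation along $u$ and equivariant under translation along $u^\perp$.

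The goal is then to exhibit a single translate of $K$ whose $L^p$-Mahler volume is at most $\M_p(\sigma_uK-x^\ast)$. Since $\pi_{u^\perp}(\sigma_uK)=\pi_{u^\perp}(K)$ and, as a standard fact about projections of convex bodies, $\pi_{u^\perp}(\mathrm{int}\,K)=\mathrm{int}\,\pi_{u^\perp}(K)$, the point $x^\ast\in\mathrm{int}\,\sigma_uK$ projects into $\mathrm{int}\,\pi_{u^\perp}(K)$, so I can choose $s\in\R$ with $x^\ast+su\in\mathrm{int}\,K$. Setting $K'\defeq K-(x^\ast+su)$, the origin lies in $\mathrm{int}\,K'$, so Lemma~\ref{SeparatingLemma} furnishes $t\in\R$ such that $u^\perp$ $\tfrac12$-separates $(K'-tu)^{\circ,p}$. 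Lemma~\ref{VolumeLemma} then yields
\begin{equation*}
\M_p(K'-tu)\leq\M_p\bigl(\sigma_u(K'-tu)\bigr)=\M_p(\sigma_uK')=\M_p(\sigma_uK-x^\ast),
\end{equation*}
where the final two equalities apply the translation identities to the $u$-part $-(s+t)u$ and the $u^\perp$-part $-x^\ast$ respectively.

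Rewriting the left-hand side as $\M_p(K-x^\ast-(s+t)u)$ and bounding it below by $\inf_y\M_p(K-y)$, while the right-hand side equals $\inf_y\M_p(\sigma_uK-y)$ by definition of $x^\ast$, proves the proposition. The substantive work is packaged inside Lemmas~\ref{SeparatingLemma} and~\ref{VolumeLemma}, which I expect will be the main obstacles: producing the $u$-translate that puts the $L^p$-polar into $\tfrac12$-separating position, and establishing the ensuing Steiner monotonicity of $\M_p$. The outer argument is essentially bookkeeping with $s_p$ and the translation-equivariance of $\sigma_u$, together with the preparatory shift along $u$ that is forced because $x^\ast\in\mathrm{int}\,\sigma_uK$ need not lie in $\mathrm{int}\,K$ when $K$ is skewed relative to $u^\perp$.
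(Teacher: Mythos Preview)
Your argument is correct and follows the same scheme as the paper: shift $K$ by $s_p(\sigma_uK)\in u^\perp$ so that $\sigma_u$ of the translate has its $L^p$-Santal\'o point at the origin, then translate along $u$ and invoke Lemmas~\ref{SeparatingLemma} and~\ref{VolumeLemma} with $\lambda=\tfrac12$. Your preparatory shift by $su$ to force $0\in\mathrm{int}\,K'$ before applying Lemma~\ref{SeparatingLemma} is in fact a point the paper's own write-up glosses over, since $s_p(\sigma_uK)\in\mathrm{int}\,\sigma_uK$ need not lie in $\mathrm{int}\,K$.
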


The main ingredient for the proof of Proposition \ref{SteinerProp} is a generalization of a lemma of Meyer--Pajor \cite[Lemma 7]{meyer-pajor} to the $L^p$-polar (Lemma \ref{VolumeLemma} below). First, a couple of definitions. For a unit vector $u\in \partial B_2^n$, let
\begin{equation*}
    \begin{gathered}
        (u^\perp)^+\defeq \{x\in \R^n: \langle x,u\rangle\geq 0\}, \\
        (u^\perp)^-\defeq \{x\in \R^n: \langle x,u\rangle\leq 0\}, 
    \end{gathered}
\end{equation*}
the two closed half-spaces defined by $u^\perp$.

\begin{definition}\label{separatingDef}
    Let $K\subset \R^n$ be a convex body, $u\in \partial B_2^n$, and $\lambda\in (0,1)$. The hyperplane $u^\perp$ $\lambda$-separates $K$ if 
    \begin{equation*}
        \frac{|K\cap (u^\perp)^+|}{|K|} \frac{|K\cap (u^\perp)^-|}{|K|}= \lambda(1-\lambda). 
    \end{equation*}
\end{definition}

\begin{remark}
    Definition \ref{separatingDef} is equivalent to having either $|K\cap (u^\perp)^+|= \lambda |K|$ and $|K\cap (u^\perp)^-|= (1-\lambda)|K|$, or $|K\cap (u^\perp)^+|= (1-\lambda) |K|$ and $|K\cap (u^\perp)^-|= \lambda|K|$. To see why, it is enough to note that for $x, \lambda\in (0,1)$, $x(1-x)= \lambda(1-\lambda)$ if and only if $x= \lambda$ or $x=1-\lambda$.
\end{remark}

\begin{lemma}
\label{VolumeLemma}
Let $p\in (0,\infty]$.
For $u\in\partial B_2^n$ and $\lambda\in (0,1)$ such that $u^\perp$ $\lambda$-separates $K^{\circ,p}$, 
\begin{equation*}
    |(\sigma_u K)^{\circ, p}|\geq 4\lambda(1-\lambda) |K^{\circ,p}|.
\end{equation*}
\end{lemma}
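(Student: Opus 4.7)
The plan is to combine Lemma \ref{technicalLemma} with the $L^p$-Mahler volume identity $|K^{\circ,p}| = \tfrac{1}{n!}\int_{\R^n}e^{-h_{p,K}(y)}\,dy$ and the $\lambda$-separating hypothesis. Write $g(y) = e^{-h_{p,K}(y)}$ and let $y^* = y - 2\langle y, u\rangle u$ denote the reflection of $y$ in $u^\perp$. Lemma \ref{technicalLemma} supplies the pointwise bound $h_{p,\sigma_u K}(y) \leq \tfrac{1}{2}(h_{p,K}(y) + h_{p,K}(y^*))$, so exponentiating and integrating (and using the symmetry of the integrand under $y \mapsto y^*$) yields
\[
|(\sigma_u K)^{\circ,p}| \geq \frac{1}{n!}\int_{\R^n}\sqrt{g(y)g(y^*)}\,dy = \frac{2}{n!}\int_{(u^\perp)^+}\sqrt{g(y)g(y^*)}\,dy.
\]
Setting $A = n!|K^{\circ,p}\cap(u^\perp)^+|$ and $B = n!|K^{\circ,p}\cap(u^\perp)^-|$, the $\lambda$-separating condition gives $\tfrac{2AB}{A+B} = 2n!\lambda(1-\lambda)|K^{\circ,p}|$, so the lemma reduces to the integral estimate
\[
\int_{(u^\perp)^+}\sqrt{g(y)g(y^*)}\,dy \geq \frac{2AB}{A+B}.
\]

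A key tool is translation invariance: since $\sigma_u(K+tu) = \sigma_u K$ for any $t$, the integrand $g(y)g(y^*)$ is unchanged if $g$ is replaced by $g_t(y) = e^{-t\langle u, y\rangle}g(y)$, because the $e^{\pm t\langle u, y\rangle}$ factors cancel pairwise in the product. Using this, together with the substitution $y \mapsto y^*$ to rewrite $B = \int_{(u^\perp)^+}g(y^*)\,dy$, a Cauchy--Schwarz argument with the weights $e^{\pm t\langle u, y\rangle/2}$ ties the left-hand side above to the half-volumes $F^\pm(t) := n!|(K+tu)^{\circ,p}\cap(u^\perp)^\pm|$. These are log-convex in $t$ with $F^+$ decreasing and $F^-$ increasing, so by Lemma \ref{SeparatingLemma} there is a unique $t^* \in \R$ at which $F^+(t^*) = F^-(t^*)$, i.e.\ $u^\perp$ $\tfrac{1}{2}$-separates $(K+t^*u)^{\circ,p}$, giving the sharpest form of the Cauchy--Schwarz bound. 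The dimension-one case is then checked by direct calculation: for $K = [c,d]$ the quantities reduce to $F^+(t) = 1/(t+d)$, $F^-(t) = 1/(-c-t)$, and $t^* = -(c+d)/2$, after which the estimate becomes the elementary identity $4/(d-c) = 4\lambda(1-\lambda)\cdot(d-c)/(-cd)$ with equality.

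The main obstacle is bridging the $1$D identity and the full $n$-dimensional estimate, because the most natural Cauchy--Schwarz produces $\int_{(u^\perp)^+}\sqrt{gg^*}\,dy \leq \sqrt{F^+(t)F^-(t)}$, an upper rather than a lower bound on the target integral, and the desired reverse inequality fails for general log-concave integrands. The resolution must use the specific structure that $g = e^{-h_{p,K}}$ arises from a convex body: I would Fubini along the $u$-axis, treating each slice $t \mapsto g(y_0 + tu)$ (log-concave in $t$ by Prékopa--Leindler) as a $1$D model and assembling the global bound from the fiberwise identity, with the generalized Ball theorem (Theorem \ref{BallIneq}) controlling the geometry of $K^{\circ,p}$ through its near-norm. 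The central challenge is showing that the loss between the slice and integrated versions is exactly captured by the factor $4\lambda(1-\lambda)$, which mirrors how the classical Meyer--Pajor argument handles the polar case $p = \infty$.
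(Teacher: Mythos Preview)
Your reduction is correct as far as it goes: specializing Lemma \ref{technicalLemma} to $t=s$, $\xi=\xi'$, $\alpha=\beta=\gamma$ does yield the pointwise bound $h_{p,\sigma_u K}(y)\leq\tfrac12\big(h_{p,K}(y)+h_{p,K}(y^*)\big)$, and integrating $e^{-(\cdot)}$ reduces the lemma to
\[
\int_{(u^\perp)^+}\sqrt{g(y)g(y^*)}\,dy\;\geq\;\frac{2AB}{A+B}.
\]
The gap is that this last inequality is genuinely where the difficulty lies, and your proposed route---Fubini along $u$ and assemble from a one-dimensional identity---cannot work. The fibre functions $t\mapsto g(\eta+tu)$ are merely log-concave, and for a generic log-concave $\phi$ on $\R$ the inequality $\int_0^\infty\sqrt{\phi(t)\phi(-t)}\,dt\geq 2AB/(A+B)$ is \emph{false}: take $\phi=\mathbf 1_{[-1,2]}$, so that $A=2$, $B=1$, $2AB/(A+B)=4/3$, while $\sqrt{\phi(t)\phi(-t)}=\mathbf 1_{[-1,1]}(t)$ gives $\int_0^\infty\sqrt{\phi\phi^*}=1<4/3$. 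Thus no fibrewise argument using only log-concavity of the slices can succeed, and you correctly sensed this (``the desired reverse inequality fails for general log-concave integrands''); the point is that your suggested fix does not escape it.

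The underlying reason is that by taking $t=s$ you have discarded exactly the structure that produces the harmonic mean. The paper keeps the full two-parameter family $t,s>0$ with $\tfrac2r=\tfrac1t+\tfrac1s$ in Lemma \ref{technicalLemma}, feeds it through Corollary \ref{GeneralizedBall} (Ball's inequality in the radial variable, $q=n$) to obtain the near-norm inequality
\[
\|(\tfrac{s}{t+s}\xi+\tfrac{t}{t+s}\xi',r)\|_{(\sigma_u K)^{\circ,p}}\leq\tfrac{s}{t+s}\|(\xi,t)\|_{K^{\circ,p}}+\tfrac{t}{t+s}\|(\xi',-s)\|_{K^{\circ,p}},
\]
and hence the slice inclusion $\tfrac{s}{t+s}K^{\circ,p}(t)+\tfrac{t}{t+s}K^{\circ,p}(-s)\subset(\sigma_uK)^{\circ,p}(r)$. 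Brunn--Minkowski then gives $|(\sigma_uK)^{\circ,p}(r)|\geq |K^{\circ,p}(t)|^{s/(t+s)}|K^{\circ,p}(-s)|^{t/(t+s)}$, and a \emph{second} application of Ball's Theorem \ref{BallIneq}, now with $q=1$ to the slice-volume functions, yields precisely $\tfrac{2}{|(\sigma_uK)^{\circ,p}|}\leq\tfrac{1}{|K^{\circ,p}\cap(u^\perp)^+|}+\tfrac{1}{|K^{\circ,p}\cap(u^\perp)^-|}$, which is the harmonic-mean bound. The mechanism that converts the multiplicative slice estimate into $4\lambda(1-\lambda)$ is Ball's theorem applied to the \emph{bodies} $K^{\circ,p}$ and $(\sigma_uK)^{\circ,p}$, not a reverse Cauchy--Schwarz on the density $e^{-h_{p,K}}$; working at the level of the density, as you do, loses the convex-body structure that makes this step go through.
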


The proof of Lemma \ref{VolumeLemma} is rather technical and so its proof is delayed until \S \ref{SteinerMonotonicitySection}. 
Given Lemma \ref{VolumeLemma}, to prove Proposition \ref{SteinerProp} we capitalize  on the following two observations.
First, the Steiner symmetral of a convex body $K$ translated by a vector $x$ perpendicular to the direction of the symmetrization $u$, is the Steiner symmetral of $K$ with respect to $u$ translated by $x$,
\begin{equation}\label{perp}
    \sigma_u(K-x)= (\sigma_uK) -x, \quad \text{ for all } x\in u^\perp. 
\end{equation}
Indeed, it is enough to note that since $x\in u^\perp$, $\pi_{u^\perp}(K-x)= \pi_{u^\perp}(K)-x$. Therefore, $z+ tu\in \sigma_u(K-x)$ if and only if $z\in \pi_{u^\perp}(K-x)= \pi_{u^\perp}(K)-x$ and $2|t|\leq |(K-x)\cap (z+\R u)|$. Or, equivalently, $z+x\in \pi_{u^\perp}(K)$ and $2|t|\leq |K\cap (z+x+ \R u)|$, because $(K-x)\cap (z+\R u)= \big(K\cap (z+x+ \R u)\big)-x$, i.e., $z+ x+ tu\in \sigma_u K$ proving \eqref{perp}. 
\begin{figure}[H]
    \centering
\begin{tikzpicture}[scale=1]
\draw[thick, dashed] (0,2) -- (2,1) -- (-1,1) -- cycle;
\draw[thick] (2,2) -- (4,1) -- (1,1) -- cycle;
\draw[thick, dashed] (-1,0) -- (0,.5) -- (2,0) -- (0,-.5) -- cycle;
\draw[thick] (1,0) -- (2,.5) -- (4,0) -- (2,-.5) -- cycle;
\draw[->] (-2.5,0) -- (4.5,0) node[anchor=north west] {x};
\draw[->] (0,-1) -- (0,2.5) node[anchor=north west] {y};
\node[above left=1pt] at (-.5, 1.5){$K$};
\node[below left=1pt] at (-.5, -.3){$\sigma_{e_2}K$};
\node[above right=1pt] at (2.8, 1.5){$K-2e_1$};
\node[below right=1pt] at (2.5, -.3){$\sigma_{e_2}(K-2e_1)$};
\end{tikzpicture}  
    \caption{Translating $K$ in a direction normal to $u$ has the same effect on the Steiner symmetral.}
    \label{fig:SteinerPerp}
\end{figure}
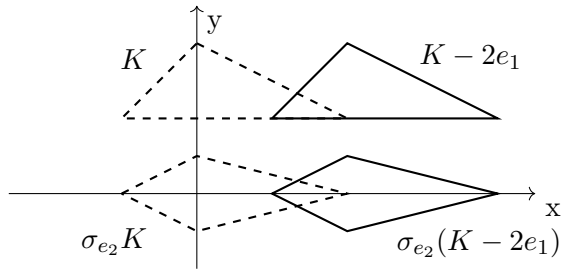

Second, the Steiner symmetral of a convex body $K$ with respect to a unit vector $u$ remains invariant under translation of the body in the direction of $u$, i.e., 
\begin{equation}\label{parallel}
    \sigma_u(K-tu)= \sigma_uK, \quad \text{ for all } t\in \R. 
\end{equation}
This follows directly from $\pi_{u^\perp}(K-tu)= \pi_{u^\perp}(K)$ and $(K-tu)\cap (x+\R u) = \big(K\cap (x+\R u)\big) - tu$. 
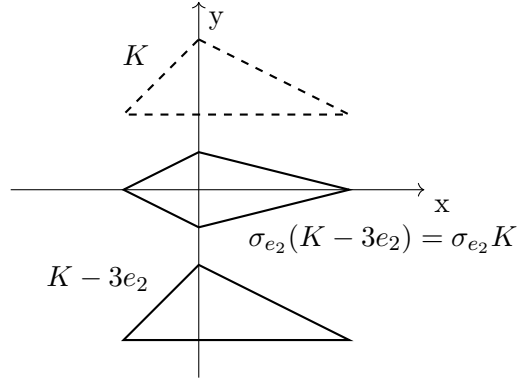
\begin{figure}[H]
    \centering
\begin{tikzpicture}[scale=1]
\draw[thick, dashed] (0,2) -- (2,1) -- (-1,1) -- cycle;
\draw[thick] (0,-1) -- (2,-2) -- (-1,-2) -- cycle;
\draw[thick] (-1,0) -- (0,.5) -- (2,0) -- (0,-.5) -- cycle;
\draw[->] (-2.5,0) -- (3,0) node[anchor=north west] {x};
\draw[->] (0,-2.5) -- (0,2.5) node[anchor=north west] {y};
\node[above left=1pt] at (-.5, 1.5){$K$};
\node[below right=1pt] at (.5, -.3){$\sigma_{e_2}(K-3e_2)=\sigma_{e_2}K$};
\node[above left=1pt] at (-.5, -1.5){$K-3e_2$};
\end{tikzpicture}  
    \caption{Translating $K$ in a direction parallel to $u$ has no effect on the Steiner symmetral.}
    \label{fig:SteinerParallel}
\end{figure}

The first observation \eqref{perp}, will be used to place $K$ in a position such that the $L^p$-Santal\'o point of the Steiner symmetral lies at the origin, $s_p(\sigma_uK)=0$, by translating $K$ in a direction perpendicular to $u$. This is always possible because the $L^p$-Santal\'o point of $\sigma_uK$ lies on $u^\perp$. The proof occupies \S \ref{pSantaloSection}.

\begin{lemma}
\label{SantaloPointLemma}
    Let $p\in(0,\infty]$. For a convex body $K\subset \R^n$ and $u\in\partial B_2^n$, $s_p(\sigma_uK)\in u^\perp$.
\end{lemma}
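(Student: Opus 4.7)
The plan is to exploit the fact that the Steiner symmetral $\sigma_u K$ is invariant under the reflection $\rho_u: \R^n\to \R^n$, $\rho_u(x) = x - 2\langle x, u\rangle u$, through the hyperplane $u^\perp$, combined with the uniqueness of the $L^p$-Santaló point from Definition \ref{LpSantaloPointDef}. The strategy is: show that $x\mapsto \M_p(\sigma_u K - x)$ is invariant under $\rho_u$, and then invoke uniqueness of the minimizer to force $s_p(\sigma_u K)$ to be a fixed point of $\rho_u$, hence an element of $u^\perp$.

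First, I would verify that $\M_p$ is invariant under orthogonal transformations. Since $\rho_u$ is orthogonal, $|\rho_u(L)| = |L|$ for any convex body $L$, and a direct computation from the defining formula \eqref{hpKdef} shows $h_{p,\rho_u(L)}(y) = h_{p,L}(\rho_u(y))$; combined with an orthogonal change of variables in the integral of \eqref{MpEq}, this gives $\M_p(\rho_u(L)) = \M_p(L)$. Second, I would observe that by construction $\rho_u(\sigma_u K) = \sigma_u K$ and that $\rho_u$ is linear and an involution, so for every $x\in \R^n$,
\begin{equation*}
\M_p(\sigma_u K - \rho_u(x)) = \M_p\bigl(\rho_u(\sigma_u K - \rho_u(x))\bigr) = \M_p\bigl(\rho_u(\sigma_u K) - x\bigr) = \M_p(\sigma_u K - x).
\end{equation*}
Thus the function $x\mapsto \M_p(\sigma_u K - x)$ is $\rho_u$-invariant. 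Third, by Definition \ref{LpSantaloPointDef} (cited from \cite[Proposition 1.5]{BMR}), this function attains its infimum at a \emph{unique} point $s_p(\sigma_u K)$. Applying $\rho_u$-invariance to that minimizer yields $\rho_u(s_p(\sigma_u K)) = s_p(\sigma_u K)$, and since the fixed-point set of $\rho_u$ is exactly $u^\perp$, the conclusion follows.

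Nothing in this argument is a serious obstacle: the only subtle point is that I rely on uniqueness of the minimizer, which is exactly what Definition \ref{LpSantaloPointDef} (and the cited proposition) provides. Alternatively, one could use the characterization in Lemma \ref{LpSPbarycenter} and check directly that $b(h_{p, \sigma_u K - x}) \in u^\perp$ for every $x\in u^\perp$ (because $h_{p,\sigma_u K - x}$ is even in the $u$-direction, so its barycenter has vanishing $u$-component), but the uniqueness-plus-symmetry route is cleaner and avoids unpacking the barycenter condition.
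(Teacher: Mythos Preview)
Your proof is correct and takes a genuinely different route from the paper. The paper first proves the more general statement (Lemma~\ref{LpSymSP}) that any body symmetric with respect to $u^\perp$ has its $L^p$-Santal\'o point on $u^\perp$, and does so by unpacking the barycenter characterization of Lemma~\ref{LpSPbarycenter}: it writes $s_p(K)=(\xi_0,t_0)$ in coordinates adapted to $u=e_n$, computes the $n$-th coordinate of $b(h_{p,K-s_p(K)})$ explicitly, uses the symmetry $h_{p,K}(\eta,-y_n)=h_{p,K}(\eta,y_n)$ to reduce the integrand to $2y_n\sinh(t_0 y_n)$, and concludes $t_0=0$. The general $u$ is then handled via a separate lemma (Lemma~\ref{LpSantaloGL}) establishing $s_p(AK)=A\,s_p(K)$ for $A\in GL(n,\R)$.

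Your argument bypasses all of this by working directly with the functional $x\mapsto \M_p(\sigma_uK-x)$: orthogonal invariance of $\M_p$ plus $\rho_u$-invariance of $\sigma_uK$ force this functional to be $\rho_u$-symmetric, and uniqueness of the minimizer pins $s_p(\sigma_uK)$ to the fixed-point set $u^\perp$. This is cleaner and more conceptual, requires no coordinate computation, and handles all $u$ at once. The paper's approach has the side benefit of recording the $GL(n,\R)$-equivariance of $s_p$ explicitly, which may be of independent interest, but for the purpose of Lemma~\ref{SantaloPointLemma} your route is the more economical one. Incidentally, the alternative you sketch at the end (checking $b(h_{p,\sigma_uK-x})\in u^\perp$ for $x\in u^\perp$ via evenness in the $u$-direction) is essentially the paper's method.
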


The second observation \eqref{parallel} is used to further translate $K$ in a direction parallel to $u$, ensuring that the $L^p$-Santal\'o point of the Steiner symmetral remains at the origin, so that, in addition, $u^\perp$ $1/2$-separates $K^{\circ,p}$. Consequently, by Lemma \ref{VolumeLemma}, Steiner symmetrization can be used to increase $\M_p$.

\begin{lemma}
\label{SeparatingLemma}
Let $p\in (0,\infty]$, and $K\subset \R^n$ a convex body with $0\in\mathrm{int}\,K$. For $u\in \partial B_2^n$ and $\lambda\in (0,1)$, there is $t\in \R$ such that $u^\perp$ $\lambda$-separates $(K-tu)^{\circ,p}$. 
\end{lemma}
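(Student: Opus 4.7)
My plan is to realize the lemma via the intermediate value theorem. Let $I := \{t \in \R : tu \in \mathrm{int}\,K\}$, a bounded open interval $(t_-, t_+)$ containing $0$. By Lemma \ref{finitenessLemma}, each $L_t := (K-tu)^{\circ,p}$ is a bounded convex body for $t \in I$. I will study the ratio
\begin{equation*}
g(t) := \frac{|L_t \cap (u^\perp)^+|}{|L_t|}, \qquad t \in I,
\end{equation*}
and note that $g(t)(1-g(t)) = \lambda(1-\lambda)$ holds precisely when $g(t) \in \{\lambda, 1-\lambda\}$. Hence it suffices to show that $g$ is continuous on $I$ with $g(t) \to 0$ as $t \to t_-^+$ and $g(t) \to 1$ as $t \to t_+^-$; then the intermediate value theorem yields $t_0 \in I$ with $g(t_0) = \lambda$, and $u^\perp$ $\lambda$-separates $L_{t_0}$.

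To compute the volumes explicitly, I will use the translation identity $h_{p,K-tu}(y) = h_{p,K}(y) - t\langle u, y\rangle$, which follows by change of variables in \eqref{hpKdef}, together with \eqref{KcircpNorm} to write
\begin{equation*}
\|\omega\|^{-n}_{L_t} = A(\omega, t) := \frac{1}{(n-1)!}\int_0^\infty r^{n-1}\,e^{-h_{p,K}(r\omega) + tr\langle u, \omega\rangle}\,dr.
\end{equation*}
Polar coordinates and the positive $1$-homogeneity of $\|\cdot\|_{L_t}$ give
\begin{equation*}
|L_t \cap (u^\perp)^\pm| = \frac{1}{n}\int_{\partial B_2^n \cap (u^\perp)^\pm} A(\omega, t)\,d\omega.
\end{equation*}
For fixed $\omega$ the integrand in $A(\omega, t)$ is monotone in $t$ (increasing if $\langle u, \omega\rangle > 0$, decreasing if $\langle u, \omega\rangle < 0$), which, on any compact subinterval of $I$, yields an integrable majorant from the endpoint values; dominated convergence then delivers continuity of both integrals, and hence of $g$.

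For the endpoint limits the key observation is a splitting argument. For $\omega \in \partial B_2^n$ with $\langle u, \omega\rangle \leq 0$ and $t \in [0, t_+)$, the exponent defining $A(\omega, t)$ is non-increasing in $t$, so $A(\omega, t) \leq A(\omega, 0)$ and therefore
\begin{equation*}
|L_t \cap (u^\perp)^-| \leq |K^{\circ,p} \cap (u^\perp)^-| < \infty.
\end{equation*}
On the other hand, by Fatou's lemma applied to $e^{-h_{p,K}(y) + t\langle u, y\rangle}$ as $t \to t_+^-$,
\begin{equation*}
\liminf_{t\to t_+^-} \frac{\M_p(K-tu)}{|K|} \geq \int_{\R^n} e^{-h_{p,K}(y) + t_+\langle u, y\rangle}\,dy = \frac{\M_p(K-t_+u)}{|K|} = \infty,
\end{equation*}
where the last equality is Lemma \ref{finitenessLemma} since $t_+u \in \partial K$ makes $0 \in \partial(K-t_+u)$. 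Consequently $|L_t| = \M_p(K-tu)/(n!|K|) \to \infty$, which combined with the previous bound forces $|L_t \cap (u^\perp)^+|/|L_t| \to 1$, i.e., $g(t) \to 1$. The limit $g(t) \to 0$ as $t \to t_-^+$ follows by the symmetric argument with $u$ replaced by $-u$. The main obstacle I anticipate is precisely this quantitative blow-up of $|L_t|$ at the endpoints of $I$ (the mere fact that $\M_p$ is infinite on the boundary of $I$ is not enough), but once Lemma \ref{finitenessLemma} is upgraded to divergence via Fatou as above, the rest is a routine application of the intermediate value theorem.
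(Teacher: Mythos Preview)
Your argument is correct, and the overall strategy coincides with the paper's: both reduce to the intermediate value theorem applied to a ratio of half-volumes of $(K-tu)^{\circ,p}$ as $t$ ranges over the interval where $tu\in\mathrm{int}\,K$. The difference lies in how the endpoint behavior is established. The paper proves (Claims~\ref{FinitenessClaim} and~\ref{FinitenessClaim2}) that even \emph{at} the endpoint $t=b$, the half-volume $|(K-be_n)^{\circ,p}\cap(e_n^\perp)^-|$ is finite; this is obtained by first handling $p=\infty$ via an explicit cone containment argument and then reducing finite $p$ to $p=\infty$ through the comparison $K^{\circ,p}\subset C_\lambda\,(K-(1-\tfrac1\lambda)b(K))^\circ$ of Corollary~\ref{InclusionLemma}. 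You sidestep all of this: working only on the open interval $I$, you observe that for $\omega$ in the lower half-sphere the integrand defining $A(\omega,t)$ is monotone decreasing in $t\geq 0$, so $|L_t\cap(u^\perp)^-|\leq|K^{\circ,p}\cap(u^\perp)^-|$ uniformly, while Fatou combined with Lemma~\ref{finitenessLemma} forces $|L_t|\to\infty$. Your route is more elementary and self-contained---it needs neither the $p=\infty$ geometry nor the $L^p$-to-$L^\infty$ comparison lemma---whereas the paper's Claims~\ref{FinitenessClaim}--\ref{FinitenessClaim2} yield the slightly stronger statement that the relevant half-volume remains finite exactly at the boundary, which may be of independent interest but is not needed for the lemma itself.
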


\begin{figure}[H]
    \centering
\begin{tikzpicture}[baseline, scale=1]
\draw[thick, dashed] (0,2) -- (2,1) -- (-1,1) -- cycle;
\draw[thick] (0,.7) -- (2,-.3) -- (-1,-.3) -- cycle;
\draw[->] (-2.5,0) -- (3,0) node[anchor=north west] {x};
\draw[->] (0,-1) -- (0,2.5) node[anchor=north west] {y};
\node[above right=1pt] at (1, 1.5){$K$};
\node[below right=1pt] at (1, -0.5){$K- te_2$};
\end{tikzpicture}  
\begin{tikzpicture}[baseline, scale=.8]
\draw[thick] (-10/7, 10/7) -- (0,-10/3) -- (5/7, 10/7) -- cycle;
\draw[->] (-2.5,0) -- (3,0) node[anchor=north west] {x};
\draw[->] (0,-4) -- (0,3) node[anchor=north west] {y};
\node[above right=1pt] at (.8, .5){$(K-te_2)^\circ\cap (e_2^\perp)^+$};
\node[above right=1pt] at (.5, -2){$(K-te_2)^\circ\cap (e_2^\perp)^-$};
\end{tikzpicture}  
    \caption{Translating $K$ in a direction parallel to $u$ so that $u^\perp$ $1/2$-separates the $L^p$-polar.}
\end{figure}
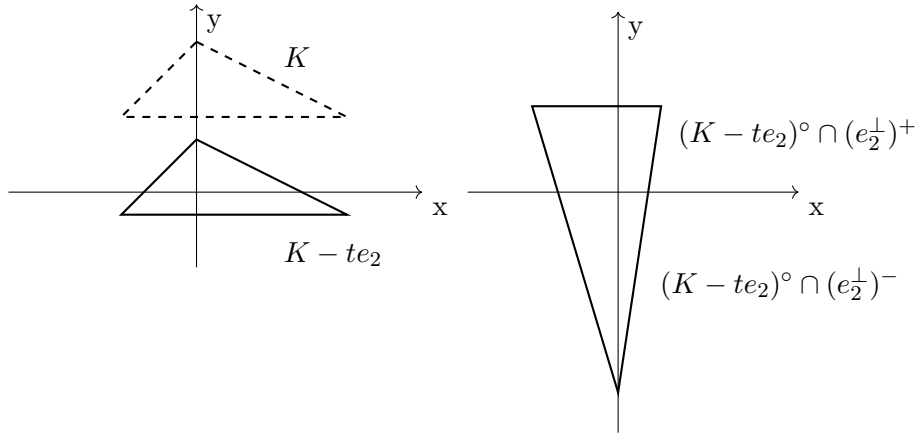

Given a direction $u\in \partial B_2^n$,
Lemmas \ref{VolumeLemma}, \ref{SantaloPointLemma}, and \ref{SeparatingLemma}, allow us to reposition $K$ in a way that achieves two objectives simultaneously: first, $u^\perp$ $1/2$-separates its $L^p$-polar, resulting in an increased $L^p$-Mahler volume of the Steiner symmetral, and second, the $L^p$-Mahler volume of its Steiner symmetral attains its minimum value over translations. This is sufficient to prove Proposition \ref{SteinerProp}. Theorem \ref{LpSantaloThm} then follows by repeated applications of Steiner symmetrization.

\subsection{Steiner symmetrization}
\label{steinerSection}

We recall a few facts about Steiner symmetrization \cite[pp. 286--287]{steiner} without proof. 
\begin{definition}
Let $u\in \partial B_2^n$.
A convex body $K\subset \R^n$ is symmetric with respect to a hyperplane $u^\perp$ if for all $x\in K$,
$
    x- 2\langle x,u\rangle u \in K.
$
\end{definition}
Steiner symmetrization produces a convex body that is symmetric with respect to $u^\perp$. 
It also preserves convexity and volume \cite[Proposition 9.1]{gruber}. 
\begin{lemma}\label{SteinerProperties}
    For a convex body $K\subset \R^n$ and $u\in\partial B_2^n$, $\sigma_u K$ is a convex body, symmetric with respect to $u^\perp$, with $|\sigma_u K|= |K|$.
\end{lemma}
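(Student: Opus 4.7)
The plan is to verify the three assertions of Lemma \ref{SteinerProperties} directly from the definition of $\sigma_u K$. Introduce the auxiliary function
\begin{equation*}
    f(x) \defeq |K\cap (x+\R u)|, \qquad x\in \pi_{u^\perp}(K),
\end{equation*}
so that $\sigma_u K= \{x+ tu: x\in \pi_{u^\perp}(K), \ |t|\leq \tfrac12 f(x)\}$. Observe that $\pi_{u^\perp}(K)$ is a convex subset of $u^\perp$ as the image of a convex set under a linear map, and that for each $x\in \pi_{u^\perp}(K)$ the slice $K\cap(x+\R u)$ is a (bounded) closed interval, so $f(x)$ is its length.

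First I would establish symmetry. If $y= x+ tu\in \sigma_u K$ with $x\in u^\perp$, then $y- 2\langle y,u\rangle u= x- tu$; since $|-t|= |t|\leq \tfrac12 f(x)$, this point also lies in $\sigma_u K$. Next I would obtain volume invariance by Fubini's theorem applied in the direction $u$:
\begin{equation*}
    |\sigma_u K|= \int_{\pi_{u^\perp}(K)} |(\sigma_u K)\cap (x+\R u)|\,\dif x= \int_{\pi_{u^\perp}(K)} f(x)\,\dif x= |K|,
\end{equation*}
where the second equality uses that $(\sigma_u K)\cap(x+\R u)$ is an interval of length $f(x)$ by construction.

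The main obstacle is convexity of $\sigma_u K$, which I expect to handle via Brunn's concavity theorem. Applied to the convex body $K$ and the family of parallel hyperplanes $x+ u^\perp$, it gives that $f^{1/1}= f$ is concave on its support $\pi_{u^\perp}(K)$ (the one-dimensional case of Brunn--Minkowski). Given $y_i= x_i+ t_i u\in \sigma_u K$ for $i= 1,2$ and $\lambda\in [0,1]$, write
\begin{equation*}
    \lambda y_1+ (1-\lambda) y_2= \bigl(\lambda x_1+ (1-\lambda)x_2\bigr)+ \bigl(\lambda t_1+ (1-\lambda)t_2\bigr)u.
\end{equation*}
Convexity of $\pi_{u^\perp}(K)$ places the $u^\perp$-component back in $\pi_{u^\perp}(K)$, while the triangle inequality together with the concavity of $f$ yields
\begin{equation*}
    |\lambda t_1+ (1-\lambda)t_2|\leq \lambda|t_1|+ (1-\lambda)|t_2|\leq \tfrac12\bigl(\lambda f(x_1)+ (1-\lambda)f(x_2)\bigr)\leq \tfrac12 f\bigl(\lambda x_1+ (1-\lambda)x_2\bigr),
\end{equation*}
so $\lambda y_1+ (1-\lambda)y_2\in \sigma_u K$. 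Finally, boundedness and closedness of $\sigma_u K$ follow from the corresponding properties of $\pi_{u^\perp}(K)$ and the boundedness of $f$, completing the claim that $\sigma_u K$ is a convex body.
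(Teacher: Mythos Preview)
The paper does not prove this lemma at all: it is stated with a citation to Gruber \cite[Proposition 9.1]{gruber} and to Steiner's original paper, and no argument is given. So there is no ``paper's approach'' to compare against; your proposal supplies what the paper deliberately omits.

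Your argument is essentially correct. Two small points are worth tightening. First, the phrase ``family of parallel hyperplanes $x+u^\perp$'' is misleading, since for $x\in u^\perp$ one has $x+u^\perp=u^\perp$ identically; what you actually need is simply that for a convex $K$ one can write $K=\{x+tu: x\in\pi_{u^\perp}(K),\ g(x)\le t\le h(x)\}$ with $h$ concave and $g$ convex, whence $f=h-g$ is concave---this is immediate from convexity of $K$ and does not require invoking Brunn--Minkowski (though your ``one-dimensional Brunn--Minkowski'' formulation via the slice inclusion $\lambda(K\cap(x_1+\R u))+(1-\lambda)(K\cap(x_2+\R u))\subset K\cap(\lambda x_1+(1-\lambda)x_2+\R u)$ is also fine). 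Second, the closing sentence about closedness is a bit quick: boundedness of $f$ alone does not give closedness of $\sigma_uK$; you need that $f$ is upper semicontinuous on $\pi_{u^\perp}(K)$ (which follows from compactness of $K$), and you should also remark that $\sigma_uK$ has nonempty interior so that it is indeed a convex \emph{body}. With those tweaks the proof is complete.
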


Since $L^p$-polarity preserves the symmetries of the convex body with respect to hyperplanes \cite[Lemma 5.17]{BMR}, by Lemma \ref{SteinerProperties}, $(\sigma_u K)^{\circ,p}$ is symmetric with respect to $u^\perp$. 
\begin{lemma}\label{LpPolarSteinerSym}
    Let $p\in (0,\infty]$. For a convex body $K\subset \R^n$ and $u\in \partial B_2^n$, $(\sigma_u K)^{\circ,p}$ is symmetric with respect to $u^\perp$. 
\end{lemma}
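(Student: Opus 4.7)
My plan is to chain two facts already in play in the paper: Lemma \ref{SteinerProperties} ensures that $\sigma_u K$ is symmetric with respect to $u^\perp$, and \cite[Lemma 5.17]{BMR} (cited in the statement) says that $L^p$-polarity preserves symmetries of a convex body under hyperplane reflections. The combination gives the conclusion at once. Since the proof collapses to invoking these two facts, what I think deserves attention is a brief verification of the second one, which I would include for completeness.

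I would denote by $\rho_u(x)\defeq x - 2\langle x,u\rangle u$ the reflection across $u^\perp$, so that a body $L\subset \R^n$ is symmetric with respect to $u^\perp$ precisely when $\rho_u(L)= L$. The two features of $\rho_u$ that I would exploit are that it has unit Jacobian and is self-adjoint, $\langle x,\rho_u(y)\rangle= \langle \rho_u(x),y\rangle$. Under the hypothesis $\rho_u(L)=L$, the substitution $x\mapsto \rho_u(x)$ inside \eqref{hpKdef} would then yield $h_{p,L}(\rho_u(y))= h_{p,L}(y)$ for every $y\in\R^n$; the case $p=\infty$ is handled identically, directly from \eqref{hKDef}.

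From here, reflection invariance of $h_{p,L}$ propagates through the integral \eqref{KcircpNorm} to give $\|\rho_u(y)\|_{L^{\circ,p}}= \|y\|_{L^{\circ,p}}$, whence $\rho_u(L^{\circ,p})= L^{\circ,p}$. Applying this with $L\defeq \sigma_u K$, which is $\rho_u$-invariant by Lemma \ref{SteinerProperties}, would give the desired symmetry of $(\sigma_u K)^{\circ,p}$ with respect to $u^\perp$.

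I do not anticipate any substantive obstacle: the only computation is a change of variables, and it works uniformly for all $p\in(0,\infty]$ since the measure used to define $h_{p,L}$ (or the set over which the supremum is taken when $p=\infty$) is $\rho_u$-invariant whenever $L$ is. If the equivariance statement from \cite{BMR} is taken as a black box, the proof is a single sentence.
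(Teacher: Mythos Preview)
Your proposal is correct and is exactly the paper's approach: the paper states the lemma without a separate proof environment, deriving it in the sentence immediately preceding the statement by combining Lemma~\ref{SteinerProperties} with \cite[Lemma~5.17]{BMR}. Your added verification of the reflection invariance of $h_{p,L}$ and of $\|\cdot\|_{L^{\circ,p}}$ is a welcome elaboration of the cited black box, but the argument is otherwise identical.
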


For a matrix $A\in GL(n,\R)$, $A^T$ denotes the transpose of $A$, and $O(n)\defeq \{A\in GL(n,\R): A^T= A^{-1}\}$ the orthogonal group.
The next lemma allows for working with $e_n$ instead of an arbitrary unit vector \cite[Lemma 5.8]{BMR}. 
\begin{lemma}\label{steiner_orthogonal}
For a convex body $K\subset \R^n$, $u\in \partial B_2^n$, and $A\in O(n)$, 
\begin{equation*}
    \sigma_{u}(K)= A^{-1}\sigma_{Au}(AK). 
\end{equation*}
In particular, $|\sigma_u(K)|= |\sigma_{Au}(AK)|$.
\end{lemma}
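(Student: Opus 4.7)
The plan is to show the stronger equivariance statement $A\sigma_u K = \sigma_{Au}(AK)$ for any $A\in O(n)$, from which the claimed identity follows by applying $A^{-1}$ to both sides. Since orthogonal transformations preserve Lebesgue measure, this immediately yields $|\sigma_u(K)| = |\sigma_{Au}(AK)|$. The entire argument is a direct unwinding of the definition of Steiner symmetrization, using the fact that an orthogonal map $A$ is an isometry of $\R^n$ that sends the triple (line $\R u$, hyperplane $u^\perp$, projection $\pi_{u^\perp}$) to the corresponding triple ($\R Au$, $(Au)^\perp$, $\pi_{(Au)^\perp}$).

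Concretely, I would first record three elementary identities valid for all $x\in\R^n$:
\begin{equation*}
A(u^\perp) = (Au)^\perp, \qquad A\pi_{u^\perp}(x) = \pi_{(Au)^\perp}(Ax), \qquad A(x+\R u) = Ax + \R (Au).
\end{equation*}
The first uses $\langle v,u\rangle = \langle Av, Au\rangle$; the second follows by applying $A$ to $x - \langle x,u\rangle u$ and using the same inner product identity; the third is linearity. From the second identity I obtain $A\pi_{u^\perp}(K) = \pi_{(Au)^\perp}(AK)$. Combined with the third,
\begin{equation*}
A\bigl(K\cap (x+\R u)\bigr) = AK \cap (Ax + \R Au),
\end{equation*}
and because $A$ restricts to an isometry of the line $x+\R u$ onto the line $Ax + \R Au$, the one-dimensional Lebesgue measures of these two sets coincide.

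Given these facts, the proof is an if-and-only-if chase. A point $y$ lies in $A\sigma_u K$ precisely when $A^{-1}y = z + tu$ with $z\in \pi_{u^\perp}(K)$ and $|t|\leq \tfrac{1}{2}|K\cap (z+\R u)|$; rewriting via $y = Az + t\,Au$ and applying the three identities above shows this is equivalent to $Az \in \pi_{(Au)^\perp}(AK)$ together with $|t|\leq \tfrac{1}{2}|AK\cap (Az + \R Au)|$, which is exactly the condition $y\in \sigma_{Au}(AK)$. The ``in particular'' statement is then immediate since $|A^{-1}S| = |S|$ for every measurable $S$.

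There is no real obstacle here; the only point requiring a moment of care is the equality of one-dimensional slice measures, which hinges on $A$ acting as an isometry between parallel lines, and this is built into $A\in O(n)$.
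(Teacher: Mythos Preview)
Your proof is correct; the paper itself does not prove this lemma but merely cites it from \cite[Lemma 5.8]{BMR}, so there is no ``paper's own proof'' to compare against. The direct equivariance argument you give---recording that $A\in O(n)$ intertwines the data $(u^\perp,\pi_{u^\perp},x+\R u)$ with $((Au)^\perp,\pi_{(Au)^\perp},Ax+\R Au)$ and preserves one-dimensional measure on lines, then chasing the definition of $\sigma_u$---is exactly the natural one and would be the expected content of the cited reference.
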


Finally, Steiner symmetrization preserves symmetries in orthogonal directions \cite[Lemma 3.1]{xi-zhao}. 
\begin{lemma}\label{SteinerOrthogonal}
    Let $K\subset \R^n$ be a convex body, and $u\in \partial B_2^n$ such that $K$ is symmetric with respect to $u^\perp$. For $v\in \partial B_2^n\cap u^\perp$, $\sigma_vK$ is symmetric with respect to $u^\perp$. 
\end{lemma}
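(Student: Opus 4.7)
The plan is to unwind the definition of Steiner symmetrization and verify directly that the reflection $R_u : x \mapsto x - 2\langle x,u\rangle u$ through $u^\perp$ maps $\sigma_v K$ to itself. Since $K$ is symmetric with respect to $u^\perp$, we have $R_u(K)=K$, and the three facts we will exploit are: (i) $R_u(v)=v$ because $v\in u^\perp$, so $R_u$ fixes every line parallel to $v$ setwise; (ii) $R_u$ preserves $v^\perp$, since $\langle R_u(x),v\rangle=\langle x,R_u(v)\rangle=\langle x,v\rangle$; and (iii) $R_u$ is a linear isometry, hence preserves one-dimensional Lebesgue measure along any line.

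From (ii) together with $R_u(K)=K$, I would first observe that
\begin{equation*}
R_u\bigl(\pi_{v^\perp}(K)\bigr)=\pi_{v^\perp}\bigl(R_u(K)\bigr)=\pi_{v^\perp}(K),
\end{equation*}
so the projection of $K$ onto $v^\perp$ is itself invariant under $R_u$. Next, for any $x\in\pi_{v^\perp}(K)$, using (i) we get $R_u(x+\R v)=R_u(x)+\R v$, and combining with $R_u(K)=K$ gives
\begin{equation*}
K\cap\bigl(R_u(x)+\R v\bigr)=R_u(K)\cap R_u(x+\R v)=R_u\bigl(K\cap(x+\R v)\bigr).
\end{equation*}
By (iii), applying $R_u$ to this segment preserves its length, i.e.\ $|K\cap(R_u(x)+\R v)|=|K\cap(x+\R v)|$.

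Now suppose $y=x+tv\in\sigma_v K$, meaning $x\in\pi_{v^\perp}(K)$ and $|t|\leq\tfrac12|K\cap(x+\R v)|$. Then
\begin{equation*}
R_u(y)=R_u(x)+tv,
\end{equation*}
and by the two identities above, $R_u(x)\in\pi_{v^\perp}(K)$ and $|t|\leq\tfrac12|K\cap(R_u(x)+\R v)|$, so $R_u(y)\in\sigma_v K$. This shows $R_u(\sigma_v K)\subseteq\sigma_v K$; since $R_u$ is an involution, equality follows, proving that $\sigma_v K$ is symmetric with respect to $u^\perp$.

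There is no real technical obstacle here; the argument is a bookkeeping exercise using that $R_u$ commutes with the projection onto $v^\perp$ and with translation along $v$ whenever $v\perp u$. The only point deserving a moment of care is checking that $R_u$ fixes $v^\perp$ as a set (so that the projection step is legitimate), which is immediate from $v\in u^\perp$.
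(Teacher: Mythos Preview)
Your proof is correct. The paper does not actually prove this lemma itself but only cites \cite[Lemma 3.1]{xi-zhao}; your direct verification via the reflection $R_u$ is a clean, self-contained argument and nothing is missing. The only step you leave slightly implicit is that $R_u$ and $\pi_{v^\perp}$ commute, but this follows immediately from your facts (i) and (ii): writing $x=\pi_{v^\perp}(x)+\langle x,v\rangle v$ and applying $R_u$ gives $R_u(x)=R_u(\pi_{v^\perp}(x))+\langle x,v\rangle v$ with $R_u(\pi_{v^\perp}(x))\in v^\perp$, so $\pi_{v^\perp}(R_u(x))=R_u(\pi_{v^\perp}(x))$.
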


\subsection{\texorpdfstring{Symmetries of the $L^p$-Santal\'o point}{Symmetries of the Lᵖ-Santaló point}}
\label{pSantaloSection}
In this section, we prove Lemma \ref{SantaloPointLemma}. 

A measure $\mu$ on $\R^n$ is called \textit{absolutely continuous} with respect to another measure $\nu$, if $\mu(A)=0$ for all measurable $A\subset \R^n$ with $\nu(A)=0$. As a consequence, if for a positive function $f: \R^n\to [0,\infty)$, 
\begin{equation*}
    \int_{\R^n}f(x)\dif\mu(x)=0, 
\end{equation*}
then $f$ is zero $\mu$-almost everywhere, and hence $f$ is zero $\nu$-almost everywhere.

\begin{lemma}\label{ACLemma}
    Let $p\in (0,\infty]$. For a convex body $K\subset \R^n$, the Lebsegue measure $\lambda$ is absolutely continuous with respect to the measure $e^{-h_{p,K}(y)}\dif y$.
\end{lemma}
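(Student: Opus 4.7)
The plan is to reduce the claim to the elementary measure-theoretic fact that a strictly positive function integrates to zero over a set only if that set has Lebesgue measure zero. Concretely, it suffices to show that $h_{p,K}(y)<\infty$ for every $y\in\R^n$, because then $e^{-h_{p,K}(y)}>0$ pointwise.

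For $p=\infty$, $h_{p,K}=h_K$ and since a convex body is compact, $h_K(y)=\sup_{x\in K}\langle x,y\rangle$ is finite for every $y$. For $p\in(0,\infty)$, boundedness of $K$ gives a uniform bound on $|\langle x,y\rangle|$ over $x\in K$ for each fixed $y$, so the integral $\int_K e^{p\langle x,y\rangle}\dif x/|K|$ is finite and strictly positive, whence $h_{p,K}(y)\in\R$ by \eqref{hpKdef}. (In fact $h_{p,K}$ is a finite convex, hence continuous, function on $\R^n$ by \cite[Lemma 2.1]{BMR}, though only finiteness is needed here.)

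Now suppose $A\subset\R^n$ is Lebesgue measurable with $\int_A e^{-h_{p,K}(y)}\dif y=0$. Setting $f\defeq e^{-h_{p,K}}$, which is strictly positive everywhere by the previous paragraph, write $A=\bigcup_{k\in\N} A_k$ with $A_k\defeq A\cap\{y\in\R^n: f(y)\geq 1/k\}$. Then
\begin{equation*}
\frac{1}{k}\,\lambda(A_k)\leq \int_{A_k} f(y)\dif y\leq \int_A f(y)\dif y=0,
\end{equation*}
so $\lambda(A_k)=0$ for every $k$, and hence $\lambda(A)=0$ by monotone convergence. This gives $\lambda\ll e^{-h_{p,K}(y)}\dif y$, as required. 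There is no serious obstacle; the only thing to verify carefully is the pointwise finiteness of $h_{p,K}$, which follows immediately from compactness of $K$.
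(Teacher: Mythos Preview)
Your proof is correct and follows essentially the same approach as the paper: both arguments rest on the pointwise finiteness of $h_{p,K}$ (equivalently, strict positivity of $e^{-h_{p,K}}$) and then exhaust $A$ by sets on which $e^{-h_{p,K}}$ is bounded below. The only cosmetic difference is that the paper phrases the exhaustion geometrically via $A\cap rK^\circ=\{h_K\leq r\}$ and reduces finite $p$ to $p=\infty$ through the inequality $h_{p,K}\leq h_K$, whereas you handle all $p$ uniformly with the abstract sets $A_k=A\cap\{e^{-h_{p,K}}\geq 1/k\}$; your version is slightly more streamlined.
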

\begin{proof}
    Let us start with $p=\infty$. Let $A\subset \R^n$ be a measurable set such that $\int_{A} e^{-h_K(y)}\dif y=0$. For $r>0$, 
    \begin{equation*}
        0= \int_{A} e^{-h_K(y)}\dif y\geq \int_{A\cap \{h_K\leq r\}} e^{-h_K(y)}\dif y\geq \lambda(A\cap (rK^\circ)) e^{-r}\geq 0,
    \end{equation*}
    because $\{h_K\leq r\}= rK^\circ$, by the homogeneity of $h_K$. Therefore, $\lambda(A\cap (rK^\circ))=0$ for all $r>0$. Taking $r\to \infty$, $\lambda(A)=0$.

    For $p\in (0,\infty)$, note
    \begin{equation*}
        h_{p,K}(y)= \frac1p\log\int_K e^{p\langle x,y\rangle}\frac{\dif x}{|K|}\leq \frac1p \log\int_K e^{p h_K(y)}\frac{\dif x}{|K|}= h_K(y). 
    \end{equation*}
    Therefore, for measurable $A\subset \R^n$ with $\int_A e^{-h_{p,K}(y)}\dif y=0$, 
    \begin{equation*}
        0= \int_{A} e^{-h_{p,K}(y)}\dif y\geq \int_A e^{-h_{K}(y)}\dif y\geq 0, 
    \end{equation*}
    and hence $\int_{A} e^{-h_K(y)}\dif y=0$. By the $p=\infty$ case, $\lambda(A)=0$.
\end{proof}

The $L^p$-support function inherits symmetries with respect to hyperplanes. 
\begin{claim}
    Let $p\in (0,\infty]$, $K\subset \R^n$ a convex body, and $u\in \R^n$. If $K$ is symmetric with respect to $u^\perp$, then $h_{p,K}$ is symmetric with respect to $u^\perp$, i.e., 
    \begin{equation*}
        h_{p,K}(y+su)= h_{p,K}(y-su), \quad \text{ for all } y\in u^\perp, s\in \R. 
    \end{equation*}
\end{claim}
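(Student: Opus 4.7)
The plan is to use the reflection across $u^\perp$ as a direct change of variables in the defining formula for $h_{p,K}$. Let $R_u\colon \R^n\to \R^n$ denote the orthogonal reflection $R_u(x)\defeq x- 2\langle x,u\rangle u$. The hypothesis that $K$ is symmetric with respect to $u^\perp$ is precisely that $R_u(K)=K$, and since $R_u\in O(n)$, it is a volume-preserving involution of $K$.

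The key identity, valid for any $y\in u^\perp$ and $s\in \R$, is
\begin{equation*}
\langle R_u(x),\, y+su\rangle = \langle x,y\rangle + s\langle R_u(x),u\rangle = \langle x,y\rangle - s\langle x,u\rangle = \langle x,\, y-su\rangle,
\end{equation*}
using $\langle R_u(x),y\rangle = \langle x,y\rangle$ (because $y\in u^\perp$) and $\langle R_u(x),u\rangle = -\langle x,u\rangle$.

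For finite $p\in (0,\infty)$, I would apply the change of variables $x\mapsto R_u(x)$ inside the integral in \eqref{hpKdef}: since $R_u$ preserves both $K$ and Lebesgue measure,
\begin{equation*}
h_{p,K}(y+su)= \frac{1}{p}\log\int_K e^{p\langle x,y+su\rangle}\frac{\dif x}{|K|}= \frac{1}{p}\log\int_K e^{p\langle R_u(x),y+su\rangle}\frac{\dif x}{|K|}= \frac{1}{p}\log\int_K e^{p\langle x,y-su\rangle}\frac{\dif x}{|K|}= h_{p,K}(y-su).
\end{equation*}
For $p=\infty$, the same substitution in \eqref{hKDef} gives $h_K(y+su)= \sup_{x\in K}\langle x,y+su\rangle = \sup_{x\in K}\langle R_u(x), y+su\rangle = \sup_{x\in K}\langle x, y-su\rangle= h_K(y-su)$.

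There is no real obstacle here: the claim reduces to the invariance of $K$ (and of Lebesgue measure on $K$) under $R_u$, combined with the fact that $R_u$ acts on the pairing $\langle \,\cdot\,, y+su\rangle$ by swapping the sign of the $u$-component. The only point worth checking explicitly, to keep the note self-contained, is that $R_u$ is measure-preserving, which is immediate from $R_u\in O(n)$.
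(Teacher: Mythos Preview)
Your proof is correct and is essentially the paper's argument in coordinate-free form: the paper parametrizes $K=\{x+tu:x\in K\cap u^\perp,\ |t|\le f(x)\}$ and substitutes $\tau=-t$, which is precisely your reflection $R_u$ written fiberwise. One minor point worth tightening: your formula $R_u(x)=x-2\langle x,u\rangle u$ is the orthogonal reflection across $u^\perp$ only when $|u|=1$ (otherwise $\langle R_u(x),u\rangle\neq -\langle x,u\rangle$), so you should normalize $u$ at the outset---the paper's definition of ``symmetric with respect to $u^\perp$'' already takes $u\in\partial B_2^n$, even though the claim is stated for $u\in\R^n$.
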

\begin{proof}
Since $K$ is symmetric with respect to $u^\perp$, there is $f: K\cap u^\perp\to [0,\infty)$ such that
\begin{equation*}
    K= \{x+tu: x\in K\cap u^\perp, |t|\leq f(x)\}. 
\end{equation*}
For $y\in u^\perp, s\in \R$, 
    \begin{equation*}
        \begin{aligned}
            h_{p,K}(y+su)&= \frac1p \log\int_{K\cap u^\perp} \int_{-f(x)}^{f(x)} e^{p\langle x+tu, y+su\rangle}\frac{\dif t\dif x}{|K|}\\
            &= \frac1p \log\int_{K\cap u^\perp}\int_{-f(x)}^{f(x)} e^{p\langle x,y\rangle} e^{pts}\frac{\dif t\dif x}{|K|}\\
            &= \frac1p\log\int_{K\cap u^\perp}\int_{-f(x)}^{f(x)} e^{p\langle x,y\rangle} e^{-p\tau s}\frac{\dif \tau\dif x}{|K|}\\
            &= \frac1p \log\int_{K\cap u^\perp} \int_{-f(x)}^{f(x)} e^{p\langle x+ \tau u, y- su\rangle}\frac{\dif \tau\dif x}{|K|} \\
            &= h_{p,K}(y-su), 
        \end{aligned}
    \end{equation*}
    by the change of variables $\tau= -t$.
\end{proof}

The $L^p$-polar behaves exactly like the polar under the action of $GL(n,\R)$ \cite[(4.7)]{BMR}, that is for a convex body $K\subset \R^n$ and $A\in GL(n,\R)$, 
\begin{equation}\label{KcircpGLnR}
    (AK)^{\circ,p}= (A^{-1})^T K^{\circ,p}.
\end{equation}
To see why, by a simple change of variables, $h_{p,AK}(y)= h_{p,K}(A^T y)$ \cite[Lemma 2.2 (iii)]{BMR}. Therefore, 
\begin{equation*}
\begin{aligned}
    \|y\|_{(AK)^{\circ,p}}\defeq &\left( \int_0^\infty r^{n-1}e^{-h_{p,AK}(ry)}\dif r\right)^{-\frac{1}{n}}\\
    = &\left( \int_0^\infty r^{n-1}e^{-h_{p,K}(rA^T y)}\dif r\right)^{-\frac{1}{n}}= \|A^T y\|_{K^{\circ,p}}, 
\end{aligned}
\end{equation*}
from which \eqref{KcircpGLnR} follows.

\begin{lemma}\label{LpSantaloGL}
    Let $p\in (0,\infty]$. For a convex body $K\subset \R^n$, $A\in GL(n,\R)$, $s_p(AK)= A s_p(K)$. 
\end{lemma}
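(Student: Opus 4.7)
The plan is to use the characterization of the $L^p$-Santal\'o point via the vanishing of a barycenter (Lemma \ref{LpSPbarycenter}) together with the transformation rule for the $L^p$-support function under the action of $GL(n,\R)$. Concretely, by Lemma \ref{LpSPbarycenter} and the uniqueness in Definition \ref{LpSantaloPointDef}, in order to identify $s_p(AK)$ with $As_p(K)$ it suffices to verify two things: first, that $As_p(K)\in\mathrm{int}\,(AK)$, which is immediate since $s_p(K)\in\mathrm{int}\,K$ and $A$ is a linear isomorphism; and second, that $b\bigl(h_{p,AK-As_p(K)}\bigr)=0$.

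For the second point, I would rewrite $AK-As_p(K)=A\bigl(K-s_p(K)\bigr)$ and invoke the identity $h_{p,AL}(y)=h_{p,L}(A^Ty)$ (already recalled in the excerpt, from \cite[Lemma 2.2 (iii)]{BMR}) to obtain
\begin{equation*}
    h_{p,AK-As_p(K)}(y)=h_{p,K-s_p(K)}(A^Ty).
\end{equation*}
Then I would compute $b\bigl(h_{p,AK-As_p(K)}\bigr)$ by the change of variables $z=A^Ty$. The Jacobian factor $|\det A|^{-1}$ cancels between the numerator and the normalizing volume $V$, and the integral of $z$ against $e^{-h_{p,K-s_p(K)}(z)}$ factors out the linear map $(A^{-1})^T$, yielding
\begin{equation*}
    b\bigl(h_{p,AK-As_p(K)}\bigr)=(A^{-1})^T\,b\bigl(h_{p,K-s_p(K)}\bigr)=0,
\end{equation*}
where the last equality is exactly the characterization of $s_p(K)$ from Lemma \ref{LpSPbarycenter}. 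Uniqueness of the $L^p$-Santal\'o point then forces $s_p(AK)=As_p(K)$.

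There is no real obstacle here; the only subtle point is making sure the integrals involved are finite so that the barycenter is well defined, but this is guaranteed by Lemma \ref{finitenessLemma} since $0\in \mathrm{int}\,(K-s_p(K))$ implies $0\in\mathrm{int}\,A(K-s_p(K))=\mathrm{int}\,(AK-As_p(K))$, so $\M_p(AK-As_p(K))<\infty$ and both the density $e^{-h_{p,AK-As_p(K)}}$ and its first moment are integrable.
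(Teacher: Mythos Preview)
Your proposal is correct and follows essentially the same approach as the paper: both invoke Lemma~\ref{LpSPbarycenter}, use the identity $h_{p,AL}(y)=h_{p,L}(A^Ty)$, and perform the change of variables $z=A^Ty$ to obtain $b(h_{p,AK-As_p(K)})=(A^{-1})^T b(h_{p,K-s_p(K)})$. If anything, your version is slightly more careful in explicitly checking $As_p(K)\in\mathrm{int}(AK)$ and the finiteness of the relevant integrals.
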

\begin{proof}
By Lemma \ref{LpSPbarycenter},
    the vanishing of the barycenter of the $L^p$-support function uniquely characterizes the $L^p$-Santal\'o point. 
    By changing variables, 
    \begin{equation*}
        h_{p, AK-As_p(K)}(y)= h_{p, A(K-s_p(K))}(y)= h_{p, K-s_p(K)}(A^T y), 
    \end{equation*}
    \cite[Lemma 2.2 (iii)]{BMR}, and
   $V(h_{p, AK-As_p(K)})= |\det A|^{-1} V(h_{p, K-s_p(K)})$. 
    For the barycenter, 
    \begin{equation*}
        \begin{aligned}
            b(h_{p, AK-As_p(K)})&= \int_{\R^n} y e^{-h_{p,AK-As_p(K)}(y)}\frac{\dif y}{V(h_{p, AK-As_p(K)})} \\
            &= \int_{\R^n} y e^{-h_{p,K-s_p(K)}(A^T y)} \frac{\dif y}{|\det A|^{-1} V(h_{p,K-s_p(K)})} \\
            &= (A^T)^{-1} \int_{\R^n} z e^{-h_{p,K-s_p(K)}(z)}\frac{|\det A|^{-1}\dif z}{|\det A|^{-1} V(h_{p, K-s_p(K)})} \\
            &= (A^T)^{-1} b(h_{p,K-s_p(K)}). 
        \end{aligned}
    \end{equation*}
    Therefore, $b(h_{p,K-s_p(K)})=0$ if and only if $b(h_{p, AK-As_p(K)})=0$, from which the claim follows.
\end{proof}

For Lemma \ref{SantaloPointLemma}, let us prove something slightly more general. 
\begin{lemma}\label{LpSymSP}
    Let $p\in (0,\infty]$, $K\subset \R^n$ a convex body, and $u\in \partial B_2^n$. If $K$ is symmetric with respect to $u^\perp$, then $s_p(K)\in u^\perp$. 
\end{lemma}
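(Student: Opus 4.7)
The plan is to prove Lemma \ref{LpSymSP} by combining the $GL(n,\R)$-equivariance of the $L^p$-Santal\'o point (Lemma \ref{LpSantaloGL}) with the observation that symmetry of $K$ with respect to the hyperplane $u^\perp$ is exactly invariance of $K$ under the orthogonal reflection across $u^\perp$.

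Concretely, I would introduce the orthogonal reflection
\[
R : \R^n \to \R^n, \qquad R x \defeq x - 2\langle x, u\rangle u.
\]
Then $R \in O(n) \subset GL(n,\R)$, and its fixed-point set is precisely $u^\perp$. The hypothesis that $K$ is symmetric with respect to $u^\perp$, in the sense of the definition at the beginning of \S \ref{steinerSection}, is exactly the statement $RK = K$. Applying Lemma \ref{LpSantaloGL} with $A = R$ yields
\[
s_p(K) \;=\; s_p(RK) \;=\; R\, s_p(K),
\]
so $s_p(K)$ is a fixed point of $R$, hence $s_p(K) \in u^\perp$.

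There is no real obstacle once Lemma \ref{LpSantaloGL} is available: the statement is a one-line consequence of $GL(n,\R)$-equivariance and uniqueness of the $L^p$-Santal\'o point. An alternative route would be to use the vanishing-barycenter characterization (Lemma \ref{LpSPbarycenter}) together with the Claim proved above that $h_{p,K}$ inherits the symmetry of $K$ across $u^\perp$. That would immediately force the $u$-component of $b(h_{p,K-x})$ to vanish whenever $x \in u^\perp$, by parity of $y \mapsto e^{-h_{p,K-x}(y)}$ in the $u$-variable; however, converting this into $s_p(K) \in u^\perp$ would still require a separate argument (e.g.\ a continuity/fixed-point step on $u^\perp\cap \mathrm{int}\,K$) to solve for the remaining $n-1$ components and invoke uniqueness. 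The reflection argument sidesteps this entirely and relies only on machinery already set up in the paper.
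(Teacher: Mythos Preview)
Your proof is correct and takes a genuinely different, more economical route than the paper's own argument. The paper first reduces to $u=e_n$ via Lemma \ref{LpSantaloGL} (applied to an $A\in O(n)$ sending $u$ to $e_n$), and then for the $e_n$ case it computes the $n$-th coordinate of $b(h_{p,K-s_p(K)})$ explicitly: using the Claim that $h_{p,K}$ inherits the hyperplane symmetry of $K$, the integral collapses to $\int y_n\sinh(t_0 y_n)\,e^{-h_{p,K-(\xi_0,0)}}$, and Lemma \ref{ACLemma} is invoked to force $t_0=0$. By contrast, you apply Lemma \ref{LpSantaloGL} directly to the reflection $R$ across $u^\perp$, obtaining $s_p(K)=R\,s_p(K)$ in one line. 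Your argument bypasses both the explicit barycenter computation and the auxiliary Lemma \ref{ACLemma} and the symmetry Claim for $h_{p,K}$; the paper's approach, while longer, makes the mechanism (parity of the density in the $u$-variable) visible at the level of the integral. Both ultimately rest on the uniqueness of $s_p(K)$ encoded in Definition \ref{LpSantaloPointDef} and Lemma \ref{LpSantaloGL}.
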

\begin{proof}
    To begin with, take $u= e_n$.
    Since $K\subset \R^n$ is a convex body symmetric with respect to $e_n^\perp$ there is $f: K\cap e_n^\perp \to [0,\infty)$ such that
    \begin{equation*}
        K= \{(\xi, x_n) \in\R^{n-1}\times \R: (\xi,0)\in K\cap u^\perp \text{ and } |x_n|\leq f(x)\}.
    \end{equation*}
    Let
    \begin{equation*}
        s_p(K)= (\xi_0, t_0), \quad (\xi_0,0)\in K\cap u^\perp, t_0 \in \R,
    \end{equation*}
    be the $L^p$-Santal\'o point of $K$. The claim is that $t_0=0$. Indeed,  by Lemma \ref{LpSPbarycenter}, the barycenter of $h_{p,K-s_p(K)}$ lies at the origin. In particular, the $n$-th coordinate is 0,
        {\allowdisplaybreaks
    \begin{align*}
            0&= \int_{\R^n} y_n e^{-h_{p,K-s_p(K)}(y)}\dif y\\
            &= \int_{\R^{n-1}} \int_\R  y_n e^{-h_{p,K}(\eta, y_n)} e^{\langle (\xi_0, t_0), (\eta, y_n)\rangle}\dif y_n\dif \eta \\
            &= \int_{\R^{n-1}}\int_{\R} y_n e^{-h_{p,K}(\eta, y_n)} e^{\langle \xi_0, \eta\rangle} e^{t_0 y_n}\dif y_n\dif \eta \\
            &= \int_{\R^{n-1}} \int_{0}^\infty y_n e^{-h_{p,K}(\eta, y_n)} e^{\langle \xi_0,\eta\rangle} e^{t_0 y_n}\dif y_n\dif \eta
            \\ &\hspace{.5cm}+\int_{\R^{n-1}} \int_{-\infty}^0 y_n e^{-h_{p,K}(\eta, y_n)} e^{\langle \xi_0,\eta\rangle} e^{t_0 y_n}\dif y_n\dif \eta\\
            &= \int_{\R^{n-1}} \int_{0}^\infty y_n e^{-h_{p,K}(\eta, y_n)} e^{\langle \xi_0,\eta\rangle} e^{t_0 y_n}\dif y_n\dif \eta
            \\ &\hspace{.5cm}-\int_{\R^{n-1}} \int_{0}^\infty y_n e^{-h_{p,K}(\eta, -y_n)} e^{\langle \xi_0,\eta\rangle} e^{-t_0 y_n}\dif y_n\dif \eta 
            \\
            &= \int_{\R^{n-1}} \int_{0}^\infty y_n e^{-h_{p,K}(\eta, y_n)} e^{\langle \xi_0,\eta\rangle} (e^{t_0 y_n}-e^{- t_0y_n})\dif y_n\dif \eta\\
            &= \int_{\R^{n-1}} \int_{0}^\infty y_n e^{-h_{p,K-(\xi_0,0)}(\eta, y_n)}  2\sinh(t_0 y_n)\dif y_n\dif \eta, 
    \end{align*}}because $K$ is symmetric with respect to $e_n$, thus by Lemma \ref{ACLemma}, $h_{p,K}(\eta, -y_n)= h_{p,K}(\eta, y_n)$.
By Lemma \ref{ACLemma}, $2 y_n \sinh(t_0 y_n)$ vanishes almost everywhere in $\R^{n-1}\times (0,\infty)$, hence $t_0=0$.

In general, for $u\in\partial B_2^n$, let $A\in O(n)$ such that $Au= e_n$. If $K$ is symmetric with respect to $u$, then $A K$ is symmetric with respect to $Au= e_n$. By Lemma \ref{LpSantaloGL}, $A s_p(K)= s_p(AK)\in e_n^\perp= (Au)^\perp$. To finish the proof, since for $A\in O(n)$, $(A^T)^{-1}= A$, is remains to show
\begin{equation}\label{uperpAffine}
    (Au)^\perp= (A^T)^{-1} u^\perp,
\end{equation}
Since $A$ is invertible, it is enough to show $(Au)^\perp\subset (A^T)^{-1}u^\perp$.
For $x\in (Au)^\perp$, $0= \langle x, A u\rangle= \langle A^T x, u\rangle$, thus $A^T x\in u^\perp$, or $x\in (A^T)^{-1} u. $
\end{proof}

\begin{proof}[Proof of Lemma \ref{SantaloPointLemma}]
By Lemma \ref{SteinerProperties}, $\sigma_uK$ is symmetric with respect to $u^\perp$, thus by Lemma \ref{LpSymSP}, $s_p(\sigma_u K)\in u^\perp$.     
\end{proof}

\subsection{\texorpdfstring{Separating the $L^p$-polar}{Separating the Lᵖ-polar}}
\label{separatingSection}
We now turn to the proof of Lemma \ref{SeparatingLemma}.

After proving Lemma \ref{VolumeLemma} for $p=\infty$, in order to show the classical Santal\'o inequality, Meyer--Pajor translate $K$ so that $u^\perp$ $1/2$-separates it. Since $(K^{\circ})^\circ= K$, they switch to working with $K^\circ$, so that $u^\perp$ 1/2-separates its polar $(K^\circ)^\circ$ \cite[pp. 88--89]{meyer-pajor}. Then, by Lemma \ref{VolumeLemma} for $\lambda=1/2$, 
\begin{equation*}
    \inf_{x\in \R^n}\M(K-x)\leq \M(K)= \M(K^\circ)\leq \M(\sigma_{u}(K^\circ)). 
\end{equation*}
Doing this $n$ times for $n$ orthogonal directions produces a symmetric convex body $S$ with $\inf_{x\in\R^n}\M(K-x)\leq \M(S)$. It is, therefore, enough to prove Santal\'o's inequality for symmetric convex bodies. 

$L^p$-polarity is not a duality operation, so a similar argument would not work in our case. However, even in the Meyer--Pajor argument, it is not necessary to switch to $K^\circ$. Let us explain why. One may translate $K$ in the direction of $u$ such that $u^\perp$ $1/2$-separates $K^\circ$. Take $u=e_n$ for a moment. 
Starting with a convex body $K\subset \R^n$ such that $0\in\mathrm{int}\,K$, the volume of the polar $K^\circ$ is finite (Lemma \ref{finitenessLemma}). In particular,
\begin{equation*}
    |K^\circ \cap (e_n^\perp)^+|<\infty \quad \text{ and } \quad |K^\circ\cap (e_n^\perp)^-|<\infty. 
\end{equation*}
Moving $K$ in the direction of $e_n$ will always keep one of the two finite, 
\begin{equation}\label{polarityInversionEq}
    |(K-te_n)^\circ\cap (e_n^\perp)^-|<\infty \quad \text{ for all } t\in \R \text{ with } K-t e_n\subset (e_n^\perp)^-,
\end{equation}
even though, by Lemma \ref{finitenessLemma}, when the origin is no longer in the interior their sum will become infinite.
In other words, if $K$ mostly lies in $(e_n^\perp)^-$, then $K^\circ$ mostly lies in $(e_n^\perp)^+$.
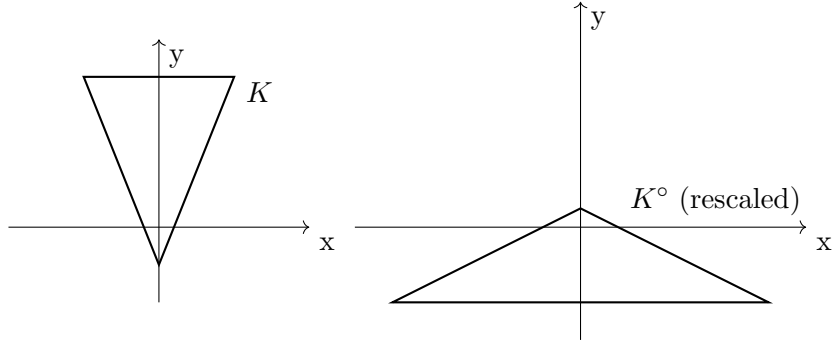
\begin{figure}[H]
    \centering
\begin{tikzpicture}[baseline, scale=1]
\draw[thick] (-1,2) -- (1,2) -- (0,-.5) -- cycle;
\draw[->] (-2,0) -- (2,0) node[anchor=north west] {x};
\draw[->] (0,-1) -- (0,2.5) node[anchor=north west] {y};
\node[above right=1pt] at (1, 1.5){$K$};
\end{tikzpicture}  
\begin{tikzpicture}[baseline, scale=.5]
\draw[thick] (0,1/2) -- (5,-2) -- (-5,-2) -- cycle;
\draw[->] (-6,0) -- (6,0) node[anchor=north west] {x};
\draw[->] (0,-3) -- (0,6) node[anchor=north west] {y};
\node[above right=1pt] at (1, 0){$K^\circ$ (rescaled)};
\end{tikzpicture}  
    \caption{If $K$ mostly lies in $(e_n^\perp)^+$, then $K^{\circ}$ mostly lies in $(e_n^\perp)^-$.}
\end{figure}

To prove \eqref{polarityInversionEq}, it is necessary to start with $K$ with $0\in\mathrm{int}\,K$, so that $|K^\circ|$ is finite (see Remark \ref{0intRemark} below). In particular, $K^\circ$ is bounded, and hence, there is $M>0$ such that
\begin{equation*}
    K^\circ\subset M B_2^n.
\end{equation*}
Then, for $t\in \R$ with $K-te_n\subset (e_n^\perp)^-$, 
\begin{equation}\label{coneSubset}
    (K-te_n)^\circ \subset \{(\eta, y_n)\in \R^{n-1}\times \R: y_n>  -1/t, |\eta|\leq M \sqrt{1+ y_n t}\},
\end{equation}
i.e., $(K-te_n)^\circ$ is contained in a cone whose intersection with $(e_n^\perp)^-$ is of finite volume. Indeed, for $(\eta, y_n)\in (K-te_n)^\circ$, 
\begin{equation}\label{polarConeEq}
    \langle (\xi,x_n-t), (\eta,y_n)\rangle= \langle (\xi,x_n), (\eta,y_n)\rangle- ty_n \leq 1, \quad \text{ for all } (\xi,x_n)\in K,
\end{equation}
or, equivalently, $\langle (\xi,x_n), (\eta, y_n)\rangle\leq 1+t y_n$. This forces $y_n> -1/t$, because if $y_n\leq -1/t$, or equivalently, $1+ty_n\leq 0$, choosing $\e>0$ such that $(0,-\e)\in K$ (such an $\e$ exists because $0\in\mathrm{int}\,K$), \eqref{polarConeEq} gives $-\e y_n\leq 1+ ty_n\leq 0$, which in turn implies $y_n\geq 0$, a contradiction to $y_n\leq -1/t$. Thus, $1+t y_n>0$. As a result, by \eqref{polarConeEq},
\begin{equation*}
    \frac{(\eta,y_n)}{1+t y_n} \in K^\circ,
\end{equation*}
which is bounded by $M$, thus
\begin{equation*}
    \frac{|\eta|}{\sqrt{1+t y_n}}\leq \frac{\sqrt{|\eta|^2+ |y_n|^2}}{\sqrt{1+t y_n}}\leq M.
\end{equation*}
proving \eqref{coneSubset}. The part of the cone of the right-hand side of \eqref{coneSubset} that lies in $(e_n^\perp)^-$ has finite volume. Indeed, it is contained in the cylinder $(M B_2^{n-1})\times [-1/t,0]$.

In sum, we have verified the following claim for $p=\infty$. 
\begin{claim}\label{FinitenessClaim}
Let $p\in (0,\infty]$. For a convex body $K\subset \R^n$ with $0\in\mathrm{int}\, K$, and $t\in \R$ such that $(K- te_n)\subset (e_n^\perp)^-$,
\begin{equation*}
    |(K-te_n)^{\circ,p} \cap (e_n^\perp)^-|< \infty. 
\end{equation*}
\end{claim}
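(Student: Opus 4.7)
The plan is to sidestep the explicit cone argument used for $p=\infty$ and handle all $p\in(0,\infty]$ at once via the integral representation of the $L^p$-polar volume. The key observation is that the global identity $|K^{\circ,p}|=\frac{1}{n!}\int_{\R^n}e^{-h_{p,K}(y)}\dif y$ built into \eqref{MpEq} admits a half-space analogue,
\begin{equation*}
|(K-te_n)^{\circ,p}\cap(e_n^\perp)^-| = \frac{1}{n!}\int_{(e_n^\perp)^-}e^{-h_{p,K-te_n}(y)}\dif y,
\end{equation*}
obtained by restricting the polar-coordinate decomposition of $(K-te_n)^{\circ,p}$ to directions $\theta\in S^{n-1}\cap(e_n^\perp)^-$ and substituting \eqref{KcircpNorm}. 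Once this reduction is in place the claim becomes an integrability statement.

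The second step is a linear change of variables in \eqref{hpKdef}, which gives the translation rule
\begin{equation*}
h_{p,K-te_n}(y) = h_{p,K}(y) - t y_n.
\end{equation*}
The hypothesis $K-te_n\subset(e_n^\perp)^-$ amounts to $t\geq h_K(e_n)$, and $0\in\mathrm{int}\,K$ forces $h_K(e_n)>0$, so $t>0$. For $y\in(e_n^\perp)^-$ one has $y_n\leq 0$, hence $ty_n\leq 0$ and
\begin{equation*}
e^{-h_{p,K-te_n}(y)} = e^{t y_n}\,e^{-h_{p,K}(y)} \leq e^{-h_{p,K}(y)}.
\end{equation*}

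Combining both ingredients,
\begin{equation*}
|(K-te_n)^{\circ,p}\cap(e_n^\perp)^-| \leq \frac{1}{n!}\int_{\R^n}e^{-h_{p,K}(y)}\dif y = |K^{\circ,p}|,
\end{equation*}
which is finite by Lemma \ref{finitenessLemma} since $0\in\mathrm{int}\,K$. The only delicate point I anticipate is justifying the polar-coordinate identity in the regime where $(K-te_n)^{\circ,p}$ is unbounded (which occurs when $0\notin K-te_n$). This is harmless: since $-te_n\in K-te_n$, one has $h_{K-te_n}(\theta)\geq -t\theta_n>0$ on the open lower hemisphere $\{\theta_n<0\}$, so $\|\theta\|_{(K-te_n)^{\circ,p}}>0$ off a spherical null set, and the half-space identity goes through without modification.
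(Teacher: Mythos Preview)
Your argument is correct and considerably more direct than the paper's. The paper first establishes the case $p=\infty$ by an explicit cone containment, showing that $(K-te_n)^\circ$ lies inside a cone whose intersection with $(e_n^\perp)^-$ has finite volume, and then for finite $p$ invokes an approximation result (Corollary~\ref{InclusionLemma}, derived from Lemma~\ref{hKhpKApprox}) to trap $(K-te_n)^{\circ,p}$ inside a dilate of a classical polar, thereby reducing to $p=\infty$. Your route bypasses both the cone geometry and the approximation lemma: you use the half-space version of the integral identity in \eqref{MpEq}, the translation rule $h_{p,K-te_n}(y)=h_{p,K}(y)-ty_n$, and the elementary observation that $ty_n\leq 0$ on $(e_n^\perp)^-$ once $t>0$. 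This handles all $p\in(0,\infty]$ uniformly and in fact yields the quantitative bound $|(K-te_n)^{\circ,p}\cap(e_n^\perp)^-|\leq |K^{\circ,p}|$, which the paper's argument does not produce.

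One small remark on your closing paragraph: the step ``$h_{K-te_n}(\theta)>0$, so $\|\theta\|_{(K-te_n)^{\circ,p}}>0$'' is not justified as written, since $h_{p,K-te_n}\leq h_{K-te_n}$ points the wrong way for that conclusion. This does not matter, however. The polar-coordinate identity you invoke holds as an equality in $[0,\infty]$ for any positively homogeneous gauge, so the chain of inequalities in your main argument already proves finiteness without needing positivity of the gauge in advance. (If you do want positivity on $\{\theta_n<0\}$, it follows immediately from your own pointwise bound $e^{-h_{p,K-te_n}(r\theta)}\leq e^{-h_{p,K}(r\theta)}$ together with the boundedness of $K^{\circ,p}$.)
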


To prove Claim \ref{FinitenessClaim} for finite $p$, we need an approximation of $K^{\circ,p}$ by $K^\circ$. 

\begin{lemma}{\upshape\cite[Lemma 2.6]{BMR}}
\label{hKhpKApprox}
    Let $p\in (0,\infty)$. For a convex body $K\subset \R^n$ with $b(K)=0$, and $\lambda\in (0,1)$, 
    \begin{equation*}
        h_K(y)\leq h_{p,K}(y/\lambda)-\frac{n}{p}\log(1-\lambda), 
    \end{equation*}
    for all $y\in \R^n$.
\end{lemma}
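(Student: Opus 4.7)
The plan is to rewrite the desired inequality in multiplicative form. Exponentiating both sides (after multiplying by $p$), the claim is equivalent to showing
\[
\int_K e^{p\langle x,y\rangle/\lambda}\frac{dx}{|K|}\geq (1-\lambda)^n e^{p\,h_K(y)}.
\]
So fix $y\in \R^n$ and pick $x^*\in K$ with $\langle x^*,y\rangle = h_K(y)$ (which exists since $K$ is compact). The idea is to exploit the convexity of $K$ by restricting the integral to a well-chosen affine image of $K$ lying inside $K$.

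Specifically, I would consider the set $C_\lambda\defeq \lambda x^* + (1-\lambda)K$, which is contained in $K$ by convexity and has volume $(1-\lambda)^n|K|$. Performing the change of variables $x= \lambda x^* + (1-\lambda) z$ with $z\in K$ in the restricted integral yields
\[
\int_{C_\lambda} e^{p\langle x,y\rangle/\lambda}dx = (1-\lambda)^n e^{p\langle x^*,y\rangle}\int_K e^{p(1-\lambda)\langle z,y\rangle/\lambda}dz = (1-\lambda)^n e^{p\,h_K(y)}\int_K e^{p(1-\lambda)\langle z,y\rangle/\lambda}dz.
\]
Since $C_\lambda\subset K$, the left-hand side is bounded above by $\int_K e^{p\langle x,y\rangle/\lambda}dx$, so the desired inequality will follow once I show that $\int_K e^{p(1-\lambda)\langle z,y\rangle/\lambda}dz\geq |K|$.

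That last step is exactly where the assumption $b(K)=0$ enters: applying Jensen's inequality to the convex function $t\mapsto e^t$ with respect to the uniform probability measure on $K$ gives
\[
\int_K e^{p(1-\lambda)\langle z,y\rangle/\lambda}\frac{dz}{|K|}\geq \exp\!\left( \frac{p(1-\lambda)}{\lambda}\left\langle b(K),y\right\rangle\right)= 1.
\]
Combining the three displayed inequalities and taking $\frac1p\log$ of both sides delivers the stated bound. The only conceptual step is the convex combination trick $C_\lambda\subset K$; the rest is a change of variables and one application of Jensen, so I do not anticipate any real obstacle.
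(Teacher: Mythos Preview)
Your argument is correct. The convex-combination trick $C_\lambda=\lambda x^*+(1-\lambda)K\subset K$ together with the change of variables gives exactly the factor $(1-\lambda)^n e^{p\,h_K(y)}$, and Jensen's inequality with $b(K)=0$ handles the remaining integral; there is no gap.

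Note that the present paper does not actually prove this lemma: it is quoted verbatim from \cite[Lemma~2.6]{BMR} and used as a black box, so there is no in-paper proof to compare against. Your argument is the standard one for this type of estimate (restrict to a scaled copy anchored at a maximizer of the linear functional, then use Jensen on the leftover factor), and it is essentially the proof given in \cite{BMR}.
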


\begin{corollary}\label{InclusionLemma}
    Let $p\in(0,\infty]$. For a convex body $K\subset \R^n$, 
    \begin{equation*}
        K^{\circ,p}\subset \frac{1}{\lambda (1-\lambda)^p} \left(K-\Big( 1-\frac{1}{\lambda}\Big) b(K) \right)^\circ,
    \end{equation*}
    for all $\lambda\in (0,1)$.
\end{corollary}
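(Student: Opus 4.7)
The plan is to deduce the inclusion directly from Lemma \ref{hKhpKApprox} together with the integral definition \eqref{KcircpNorm} of the near-norm of $K^{\circ,p}$. The strategy has two ingredients: first, absorb $b(K)$ by translation; second, convert the resulting pointwise bound on support functions into a bound on the near-norm via a gamma integral.

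For the first step, I would apply Lemma \ref{hKhpKApprox} to $K-b(K)$, which has vanishing barycenter. Using the translation rules $h_{K-c}(y) = h_K(y) - \langle c,y\rangle$ and $h_{p,K-c}(y) = h_{p,K}(y) - \langle c,y\rangle$ (both of which follow by a change of variables in the defining integrals), the lemma's conclusion for $K-b(K)$ rearranges to
\[
h_{K-c_\lambda}(y) \leq h_{p,K}(y/\lambda) - \frac{n}{p}\log(1-\lambda), \qquad c_\lambda \defeq \Big(1-\frac{1}{\lambda}\Big)\,b(K),
\]
since the scaling $y\mapsto y/\lambda$ in the lemma's argument mismatches with the linear shift of the support function; this mismatch is exactly the translation $c_\lambda$ appearing in the statement.

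For the second step, I would substitute $y \mapsto \lambda r y$ for $r>0$, invoke the positive $1$-homogeneity of $h_{K-c_\lambda}$, and exponentiate to obtain
\[
e^{-h_{p,K}(ry)} \leq (1-\lambda)^{-n/p}\, e^{-\lambda r\, h_{K-c_\lambda}(y)}.
\]
Multiplying by $r^{n-1}/(n-1)!$ and integrating over $(0,\infty)$, the identity $\int_0^\infty r^{n-1} e^{-ar}\,dr = \Gamma(n)/a^n$ yields
\[
\|y\|_{K^{\circ,p}}^{-n} \leq \frac{(1-\lambda)^{-n/p}}{\lambda^n\, h_{K-c_\lambda}(y)^n},
\]
which upon taking $(-1/n)$-th powers gives a lower bound of the form $\|y\|_{K^{\circ,p}} \geq c(\lambda,p)\,h_{K-c_\lambda}(y)$. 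For $y\in K^{\circ,p}$, the condition $\|y\|_{K^{\circ,p}}\leq 1$ then produces an upper bound on $h_{K-c_\lambda}(y)$, and by positive $1$-homogeneity of the support function this is precisely the statement that $y$ lies in a dilate of $(K-c_\lambda)^\circ$, giving the claimed inclusion.

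The argument is essentially a bookkeeping exercise; the only real care point is tracking how translation interacts with the rescaling $y\mapsto y/\lambda$ in Lemma \ref{hKhpKApprox}, which is what forces the specific shift $(1-1/\lambda)b(K)$ rather than $b(K)$ itself. There is no genuine analytic obstacle beyond the two manipulations above.
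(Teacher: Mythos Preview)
Your proposal is correct and follows essentially the same route as the paper's proof: apply Lemma~\ref{hKhpKApprox} to $K-b(K)$, use the translation rules to rewrite the resulting inequality as $h_{K-c_\lambda}(y)\le h_{p,K}(y/\lambda)-\frac{n}{p}\log(1-\lambda)$ with $c_\lambda=(1-\tfrac1\lambda)b(K)$, and then pass to the near-norm~\eqref{KcircpNorm}. The only difference is that the paper simply asserts the resulting norm comparison $\|y\|_{(K-c_\lambda)^\circ}\le \frac{1}{\lambda(1-\lambda)^p}\|y\|_{K^{\circ,p}}$, whereas you spell out the gamma-integral computation behind it; one small caveat is that your integral $\int_0^\infty r^{n-1}e^{-ar}\,dr=\Gamma(n)/a^n$ requires $a=\lambda h_{K-c_\lambda}(y)>0$, but when $h_{K-c_\lambda}(y)\le 0$ the inclusion is automatic.
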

\begin{proof}[Proof of Corollary \ref{InclusionLemma}]
    Let $\lambda\in (0,1)$. By Lemma \ref{hKhpKApprox}, 
    \begin{equation}\label{hKhpKineq}
        h_{K-b(K)}(y)\leq h_{p, K-b(K)}(y/\lambda) -\frac{n}{p}\log(1-\lambda), \quad \text{ for all } y\in\R^n. 
    \end{equation}
    Since $h_{p,K-b(K)}(y)= h_{p,K}(y)- \langle y,b(K)\rangle$ and $h_{K-b(K)}(y)= h_K(y)-\langle y, b(K)\rangle$ \cite[Lemma 2.2 (ii)]{BMR}, \eqref{hKhpKineq} gives
    \begin{equation*}
       h_{K-(1-\frac{1}{\lambda})b(K)}(y)= h_{K}(y)-\left( 1-\frac{1}{\lambda}\right)\langle y,b(K)\rangle \leq h_{p,K}(y/\lambda) -\frac{n}{p}\log(1-\lambda). 
    \end{equation*}
    So, for the norm \eqref{KcircpNorm}, 
    \begin{equation*}
        \|y\|_{(K-(1-\frac1\lambda) b(K))^{\circ}}\leq \frac{1}{\lambda (1-\lambda)^p}\|y\|_{K^{\circ,p}},
    \end{equation*}
    from which the claim follows. 
\end{proof}

\begin{proof}[Proof of Claim \ref{FinitenessClaim}]
The $p=\infty$ case follows from \eqref{coneSubset}. Let $p\in (0,\infty)$, and 
\begin{equation*}
\begin{gathered}
    a\defeq \max\{t\in \R: K-te_n\subset (e_n^\perp)^+\},\\
    b \defeq \min\{t\in \R: K-te_n\subset (e_n^\perp)^-\}.
\end{gathered}
\end{equation*}
Since $0\in\mathrm{int}\,K$, $a<0$ and $b>0$.
\begin{figure}[H]
    \centering
   \begin{tikzpicture}[scale=1]
\draw[thick] (-1+1/7,0-3/7) -- (0+1/7,1-3/7) -- (2+1/7,-3/7) -- cycle;
\draw[dashed] (-1.5, 1-3/7) -- (2.7, 1-3/7);
\draw[dashed] (-1.5, -3/7) -- (2.7, -3/7);
\draw[->] (-2.5,0) -- (3,0) node[anchor=north west] {x};
\draw[->] (0,-1) -- (0,1.5) node[anchor=north west] {y};
\node[below right=1pt] at (0, -.5){$a$};
\node[above right=1pt] at (0, .6){$b$};
\node[above right=1pt] at (-1, 0){$K$};
\fill (0, -3/7)  circle[radius= 1pt];
\fill (0, 1-3/7)  circle[radius= 1pt];
\end{tikzpicture}  
    \caption{$a$ and $b$.}
    \label{fig:ab}
\end{figure}
For $t\in (a,b)$, $te_n \in\mathrm{int}\,K$, thus $ 0\in\mathrm{int}\,(K-te_n)$.
By Lemma \ref{finitenessLemma}, $\M_p(K-te_n)<\infty$ for all $t\in (a,b)$. In particular, $|(K-te_n)^{\circ,p}\cap (e_n^\perp)^-|<\infty$ for all $t\in (a,b)$. By continuity, it is then enough to prove the claim for $t>b$. In that case, since $t>b$, for $\lambda_0\in(0,1)$ close enough to $1$, if
\begin{equation*}
    L\defeq (K-te_n)- \left( 1-\frac{1}{\lambda_0}\right) b(K-te_n), 
\end{equation*}
then $L\subset (e_n^\perp)^-$. 
By the $p=\infty$ case, $|L^\circ\cap (e_n^\perp)^-|<\infty$. 
By Corollary \ref{InclusionLemma},
 \begin{equation*}
     (K-t e_n)^{\circ,p}\cap (e_n^\perp)^-\subset \frac{L^\circ\cap (e_n^\perp)^-}{\lambda_0 (1-\lambda_0)^{p}}, 
 \end{equation*}
 from which the claim follows.
\end{proof}

Replacing $e_n$ by $-e_n$ proves the following. 
\begin{claim}\label{FinitenessClaim2}
    Let $p\in (0,\infty]$. For a convex body $K\subset \R^n$ with $0\in\mathrm{int}\,K$, and $t\in \R$ such that $(K-te_n)\subset (e_n^\perp)^+$, 
    \begin{equation*}
        |(K-te_n)^{\circ,p}\cap (e_n^\perp)^+|<\infty. 
    \end{equation*}
\end{claim}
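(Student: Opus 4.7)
The plan is to deduce Claim \ref{FinitenessClaim2} from Claim \ref{FinitenessClaim} by a simple reflection argument, using the transformation rule \eqref{KcircpGLnR} of the $L^p$-polar under $GL(n,\R)$. The two statements differ only by a swap of half-spaces, so applying the antipodal map $A = -I \in O(n)$ should reduce one to the other.

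Concretely, suppose $K \subset \R^n$ is a convex body with $0 \in \mathrm{int}\,K$ and $t \in \R$ with $K - t e_n \subset (e_n^\perp)^+$. I would introduce $\tilde K \defeq -K$ and $\tilde t \defeq -t$. Then $\tilde K$ is a convex body with $0 \in \mathrm{int}\,\tilde K$, and
\[
\tilde K - \tilde t\, e_n \;=\; -K + t e_n \;=\; -(K - t e_n) \;\subset\; -(e_n^\perp)^+ \;=\; (e_n^\perp)^-,
\]
so the hypotheses of Claim \ref{FinitenessClaim} are satisfied for the pair $(\tilde K, \tilde t)$.

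Applying Claim \ref{FinitenessClaim} yields $|(\tilde K - \tilde t\, e_n)^{\circ,p} \cap (e_n^\perp)^-| < \infty$. On the other hand, the transformation rule \eqref{KcircpGLnR} with $A = -I$ (so that $(A^{-1})^T = -I$) gives $(\tilde K - \tilde t\, e_n)^{\circ,p} = -(K - t e_n)^{\circ,p}$. Since the antipodal map is volume-preserving and sends $(e_n^\perp)^-$ onto $(e_n^\perp)^+$, I obtain
\[
|(K - t e_n)^{\circ,p} \cap (e_n^\perp)^+| \;=\; |(\tilde K - \tilde t\, e_n)^{\circ,p} \cap (e_n^\perp)^-| \;<\; \infty,
\]
which is the desired conclusion. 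There is no substantive obstacle here: the real analytic work is already carried out in the proof of Claim \ref{FinitenessClaim}, and the present claim is a formal consequence of the $O(n)$-equivariance of $L^p$-polarity together with the fact that negation commutes with translation in the sense $-(K - t e_n) = \tilde K - \tilde t\, e_n$.
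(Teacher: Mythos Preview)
Your argument is correct and essentially the same as the paper's: the paper simply writes ``Replacing $e_n$ by $-e_n$ proves the following,'' which is the same symmetry you exploit via the antipodal map $A=-I$ and the transformation rule \eqref{KcircpGLnR}. Your version has the minor advantage of invoking Claim \ref{FinitenessClaim} as a black box rather than re-running its proof, but the underlying idea is identical.
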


\begin{remark}\label{0intRemark}
For Claims \ref{FinitenessClaim} and \ref{FinitenessClaim2}, it is important to start with a convex body $K$ with $0\in \mathrm{int}\,K$. Otherwise, the claim does not hold. For example, the polar body of the standard simplex in $\R^2$, 
\begin{equation*}
    \Delta_2\defeq \{(x,y)\in[0,\infty)^2: x+y\leq 1\}, 
\end{equation*}
translated in the direction of the $y$-axis, for $t\in (0,1)$,
\begin{equation*}
    (\Delta_2- te_2)^\circ= \left\{(x,y)\in\R^2: -\frac1t\leq y\leq \frac{1}{1-t} \text{ and } x-ty\leq 1\right\}.
\end{equation*}
Both $(\Delta_2-te_2)^\circ \cap (e_2^\perp)^+$ and $(\Delta_2- te_2)^\circ\cap (e_2^\perp)^-$ has infinite volume.
\begin{figure}[H]
    \centering
    \begin{tikzpicture}[scale=1.2, baseline]
\draw[thick] (0,-1/2) -- (1,-1/2) -- (0,1/2) -- cycle;
\draw[fill=gray!50!white] plot[smooth,samples=100,domain=1:0] (\x,-1/2) -- 
plot[smooth,samples=100,domain=0:1] (\x,1/2-\x);
\draw[->] (-1,0) -- (2,0) node[anchor=north west] {x};
\draw[->] (0,-1) -- (0,3/2) node[anchor=north west] {y};
\node[above right=1pt] at (.3, .05){$\Delta_2-1/2 e_2$};
\end{tikzpicture}  
    \begin{tikzpicture}[scale=1.2, baseline]
\draw[thick] (-2,1/2) -- (5/4,1/2) -- (3/4,-1/2) -- (-2, -1/2);
\fill[gray!50]      (-2,1/2) -| (5/4,1/2) -| (3/4,-1/2) -| (-2, -1/2) -- cycle;
\draw[fill=gray!50!white] plot[smooth,samples=100,domain=3/4:5/4] (\x,2*\x-2) -- 
plot[smooth,samples=100,domain=5/4:3/4] (\x,1/2);
\draw[->] (-2,0) -- (2,0) node[anchor=north west] {x};
\draw[->] (0,-1) -- (0,1.5) node[anchor=north west] {y};
\node[above right=1pt] at (-.9, .55){$(\Delta_2-1/2 e_2)^\circ$};
\end{tikzpicture}  
    \caption{$\Delta_2-1/2 e_2$ and its polar $(\Delta_2-1/2 e_2)^\circ$.}
\end{figure}
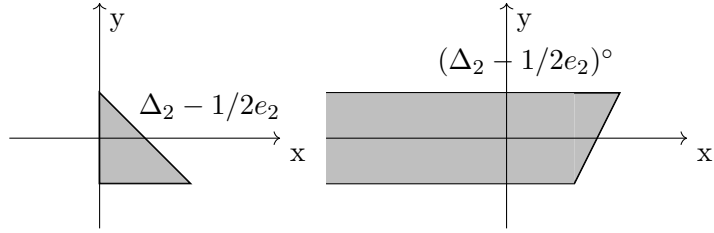
\end{remark}

\begin{proof}[Proof of Lemma \ref{SeparatingLemma}]
Let
\begin{equation*}
    \begin{gathered}
    a\defeq \max\{t<0: K-te_n\subset (e_n^\perp)^+\}, \\
    b\defeq \min\{t>0: K-te_n\subset (e_n^\perp)^-\}
    \end{gathered}
\end{equation*}
(Figure \ref{fig:ab}). 
Consider
\begin{equation*}
    f(t)\defeq \frac{|(K-te_n)^{\circ,p}\cap (e_n^\perp)^+|}{|(K-te_n)^{\circ,p}\cap (e_n^\perp)^-|}, \quad t\in [a,b]. 
\end{equation*}
By \cite[Lemma 4.3]{BMR}, $f$ is finite valued and $C^2$ for $t\in (a,b)$. By Lemma \ref{finitenessLemma}, $|(K-ae_n)^{\circ,p}|=\infty$ and $|(K-be_n)^{\circ,p}|=\infty$.
In addition, by Claim \ref{FinitenessClaim2}, $|(K-a e_n)^{\circ,p}\cap (e_n^\perp)^+|<\infty$, forcing $|(K-ae_n)^{\circ,p}\cap (e_n^\perp)^-|=\infty$ thus $f(a)=0$. Similarly, by Claim \ref{FinitenessClaim}, $|(K-b e_n)^{\circ,p}\cap (e_n^\perp)^-|<\infty$ and $|(K-be_n)^{\circ,p}\cap (e_n^\perp)^+|=\infty$, thus $f(b)= \infty$. By the intermediate valued theorem, there is $t_0\in (a,b)$ such that $f(t_0)= \frac{\lambda}{1-\lambda}$ (could also choose $\frac{1-\lambda}{\lambda}$), i.e., $e_n^\perp$ $\lambda$-separates $(K-t_0e_n)^{\circ,p}$. 
\end{proof}

\subsection{A generalization Ball's Brunn--Minkowski inequality for the harmonic mean}
\label{BallSection}
For $p=\infty$, $\|\cdot\|_{K^\circ}= h_K$, thus going from inequalities on $h_K$ to inclusions on $K^\circ$ is immediate. A similar equality does not hold for finite $p$, so a little more work is required.
The following theorem due to Ball is very useful for this purpose \cite[Theorem 4.10]{ball}. 
\begin{theorem}\label{BallIneq}
    Let $F,G,H: (0,\infty)\to [0,\infty)$ be measurable functions, not almost everywhere $0$, so that
    \begin{equation*}
        H(r)\geq F(t)^{\frac{s}{t+s}} G(s)^{\frac{t}{t+s}}, \quad \text{ for all } \quad \frac{2}{r}=\frac{1}{t}+\frac{1}{s}.
    \end{equation*}
    Then, for $q\geq 1$, 
    \begin{equation*}
        \left(\int_{0}^\infty r^{q-1} H(r) \dif r \right)^{-\frac1q}\leq \frac12\left(\int_0^\infty t^{q-1} F(t) \dif t \right)^{-\frac1q}+\frac12 \left( \int_0^\infty s^{q-1} G(s)\dif s\right)^{-\frac1q}.
    \end{equation*}
\end{theorem}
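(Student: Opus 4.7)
The plan is to prove this inequality, due to Ball, by reducing it to the classical Prékopa--Leindler inequality on $\R^q$ via a change of variables that converts the harmonic-mean relation into an arithmetic-mean relation. By a standard truncation and mollification argument, I would first assume that $F$ and $G$ (and hence $H$) are continuous and compactly supported in $(0,\infty)$, so that all integrals are finite and positive and the pointwise hypothesis holds everywhere.

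Next, perform the substitution $u = 1/t$, $v = 1/s$, $w = 1/r$. The harmonic-mean constraint $2/r = 1/t + 1/s$ becomes the midpoint relation $w = (u+v)/2$, and the exponents transform as $s/(t+s) = u/(u+v)$ and $t/(t+s) = v/(u+v)$. Setting $\widetilde F(u) := u^{-(q+1)} F(1/u)$, and analogously $\widetilde G, \widetilde H$, the Jacobian $dr = -dw/w^{2}$ combined with the weight $r^{q-1}$ gives $\int_0^\infty r^{q-1}F(r)\,dr = \int_0^\infty \widetilde F(u)\,du$, and likewise for $G, H$. The target inequality then becomes the harmonic-mean statement
\begin{equation*}
\Big(\int_0^\infty \widetilde H(w)\,dw\Big)^{-1/q} \le \tfrac12\Big(\int_0^\infty \widetilde F(u)\,du\Big)^{-1/q} + \tfrac12\Big(\int_0^\infty \widetilde G(v)\,dv\Big)^{-1/q}.
\end{equation*}

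To close the argument I would lift $\widetilde F, \widetilde G, \widetilde H$ to rotationally symmetric (or otherwise suitably symmetrized) functions $\bar F, \bar G, \bar H$ on $\R^q$, chosen so that $\int_{\R^q}\bar \phi$ is proportional to $\int_0^\infty \widetilde \phi$ and so that the transformed pointwise hypothesis translates into the Prékopa--Leindler midpoint condition $\bar H((x+y)/2) \ge \sqrt{\bar F(x)\bar G(y)}$ on $\R^q$. The $q$-dimensional Prékopa--Leindler inequality (equivalently the functional Brunn--Minkowski inequality) then gives $(\int_{\R^q}\bar H)^{1/q} \ge \tfrac12(\int_{\R^q}\bar F)^{1/q} + \tfrac12(\int_{\R^q}\bar G)^{1/q}$, the arithmetic-mean bound, which a fortiori implies the desired harmonic-mean inequality by $\mathrm{AM}\ge \mathrm{HM}$.

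The main obstacle is precisely the asymmetry introduced by the weights $u/(u+v), v/(u+v)$ in the transformed hypothesis, which do not match the symmetric weights $1/2, 1/2$ demanded at the midpoint by standard Prékopa--Leindler. The technical heart of the argument is to design the radial lifts carefully so that the algebraic mismatch between these barycentric weights, the polar-coordinate Jacobian $|x|^{q-1}$, and the substitution factor $u^{-(q+1)}$ all cancel to produce the clean midpoint condition on $\R^q$. Once this bookkeeping is in place, a weighted AM--GM inequality absorbs the remaining factors, and the classical Prékopa--Leindler on $\R^q$ delivers the result with the correct $-1/q$ exponent arising naturally from the ambient dimension.
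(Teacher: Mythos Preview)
The paper does not prove this theorem: it is quoted verbatim from Ball's thesis \cite[Theorem~4.10]{ball} and used as a black box (the paper only derives Corollary~\ref{GeneralizedBall} from it). So there is no in-paper argument to compare your sketch against.

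That said, your outline has a real gap at exactly the point you flag as ``the technical heart.'' After the inversion $u=1/t,\ v=1/s,\ w=1/r$ and the definition $\widetilde F(u)=u^{-(q+1)}F(1/u)$, the hypothesis becomes
\[
\widetilde H\Big(\tfrac{u+v}{2}\Big)\ \ge\ \widetilde F(u)^{\,u/(u+v)}\,\widetilde G(v)^{\,v/(u+v)}\cdot\Big[\tfrac{u^{\,u/(u+v)}v^{\,v/(u+v)}}{(u+v)/2}\Big]^{q+1},
\]
and while the bracketed factor is indeed $\ge 1$ by weighted AM--GM, the exponents on $\widetilde F,\widetilde G$ remain the position-dependent pair $u/(u+v),\,v/(u+v)$. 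A rotationally symmetric lift $\bar F(x)=\widetilde F(|x|)/|x|^{q-1}$ to $\R^q$ does \emph{not} convert these into the symmetric exponents $\tfrac12,\tfrac12$ that Pr\'ekopa--Leindler requires: for collinear $x,y$ one still lands on $\bar H\big(\tfrac{x+y}{2}\big)\ge \bar F(x)^{|x|/(|x|+|y|)}\bar G(y)^{|y|/(|x|+|y|)}$, and for non-collinear $x,y$ the quantity $|x+y|/2$ is not $(|x|+|y|)/2$ at all, so the hypothesis cannot even be invoked. No Jacobian bookkeeping fixes this; the asymmetry of the weights is precisely the content of Ball's theorem, and its resolution (in Ball's argument, a one-dimensional Borell--Brascamp--Lieb step together with a specific normalisation) is not the ``clean midpoint condition on $\R^q$'' you describe. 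As written, the proposal asserts that the cancellation happens without showing it, and for the natural radial lift it does not.

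A smaller issue: Pr\'ekopa--Leindler with the midpoint hypothesis yields only the geometric-mean bound $\int\bar H\ge(\int\bar F)^{1/2}(\int\bar G)^{1/2}$, not the arithmetic bound $(\int\bar H)^{1/q}\ge\tfrac12(\int\bar F)^{1/q}+\tfrac12(\int\bar G)^{1/q}$ you state. This is harmless for the conclusion (GM\,$\ge$\,HM already gives the desired harmonic-mean inequality), but it confirms that the difficulty lies entirely in establishing the midpoint condition, which your sketch does not do.
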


As a corollary we obtain the following. 
\begin{corollary}\label{GeneralizedBall}
    Let $\lambda\in [0,1]$, and $F,G,H: (0,\infty)\to [0,\infty)$ be measurable functions, not almost everywhere $0$, so that
    \begin{equation*}
        H(r)\geq F(t)^{\frac{(1-\lambda)s}{\lambda t+ (1-\lambda)s}} G(s)^{\frac{\lambda t}{\lambda t+ (1-\lambda)s}}, \quad \text{ for all } \quad \frac{1}{r}= \frac{1-\lambda}{t}+ \frac{\lambda}{s}.
    \end{equation*}
    Then, 
    \begin{equation*}
        \left(\int_{0}^\infty r^{q-1} H(r) \dif r \right)^{-\frac1q}\leq (1-\lambda)\left(\int_0^\infty t^{q-1} F(t) \dif t \right)^{-\frac1q}+\lambda \left( \int_0^\infty s^{q-1} G(s)\dif s\right)^{-\frac1q}.
    \end{equation*}
\end{corollary}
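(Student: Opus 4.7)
The plan is to reduce Corollary \ref{GeneralizedBall} to Theorem \ref{BallIneq} by a change of variables that converts the $\lambda$-weighted harmonic mean condition into the symmetric one. Specifically, for $\lambda\in(0,1)$, I would set $\tilde t\defeq t/(2(1-\lambda))$ and $\tilde s\defeq s/(2\lambda)$, so that
\begin{equation*}
    \frac{1}{r}=\frac{1-\lambda}{t}+\frac{\lambda}{s}=\frac{1}{2\tilde t}+\frac{1}{2\tilde s}, \qquad \text{i.e.}\qquad \frac{2}{r}=\frac{1}{\tilde t}+\frac{1}{\tilde s}.
\end{equation*}
A direct computation then gives $\frac{(1-\lambda)s}{\lambda t+(1-\lambda)s}=\frac{\tilde s}{\tilde t+\tilde s}$ and $\frac{\lambda t}{\lambda t+(1-\lambda)s}=\frac{\tilde t}{\tilde t+\tilde s}$, so the rescaling precisely matches the exponents as well.

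Next I would define rescaled functions $\tilde F(\tilde t)\defeq F(2(1-\lambda)\tilde t)$, $\tilde G(\tilde s)\defeq G(2\lambda \tilde s)$, and $\tilde H(\tilde r)\defeq H(\tilde r)$. The hypothesis of the corollary transforms into
\begin{equation*}
    \tilde H(\tilde r)\geq \tilde F(\tilde t)^{\tilde s/(\tilde t+\tilde s)}\tilde G(\tilde s)^{\tilde t/(\tilde t+\tilde s)}\quad\text{whenever}\quad \tfrac{2}{\tilde r}=\tfrac{1}{\tilde t}+\tfrac{1}{\tilde s},
\end{equation*}
so Theorem \ref{BallIneq} applies to $\tilde F, \tilde G, \tilde H$.

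To finish, I would apply the change of variable $t=2(1-\lambda)\tilde t$ in the $F$-integral and $s=2\lambda\tilde s$ in the $G$-integral, each producing a scalar factor: the identities
\begin{equation*}
    \Bigl(\int_0^\infty \tilde t^{q-1}\tilde F(\tilde t)\,\dif\tilde t\Bigr)^{-1/q}=2(1-\lambda)\Bigl(\int_0^\infty t^{q-1}F(t)\,\dif t\Bigr)^{-1/q}
\end{equation*}
and its $\lambda$-analogue for $G$, combined with $\tilde H=H$, turn the factor of $1/2$ on the right-hand side of Theorem \ref{BallIneq} into weights $(1-\lambda)$ and $\lambda$, which is exactly the claimed inequality. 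The degenerate cases $\lambda\in\{0,1\}$ should be handled separately, but there the hypothesis collapses to a pointwise inequality $H\geq F$ (respectively $H\geq G$) and the conclusion is immediate from monotonicity of the integral.

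I do not anticipate any serious obstacle: the content is entirely in the arithmetic of the rescaling, which is forced by matching both the harmonic mean condition and the weighted geometric mean simultaneously. The mildest subtlety is verifying that the exponents transform correctly (the denominator $\lambda t+(1-\lambda)s=2\lambda(1-\lambda)(\tilde t+\tilde s)$ cancels uniformly), and making sure the $q$-th power scaling $(2(1-\lambda))^q$ coming out of the change of variables combines cleanly with the $-1/q$ exponent to restore the weight $2(1-\lambda)$.
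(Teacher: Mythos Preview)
Your proposal is correct and essentially identical to the paper's own proof: the paper makes exactly the same substitution $\tau=t/(2(1-\lambda))$, $\sigma=s/(2\lambda)$, defines the rescaled functions $f(\tau)=F(2(1-\lambda)\tau)$ and $g(\sigma)=G(2\lambda\sigma)$, applies Theorem~\ref{BallIneq}, and unwinds the change of variables; the endpoint cases $\lambda\in\{0,1\}$ are likewise dispatched by monotonicity.
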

\begin{proof}
When $\lambda=0$ or $1$, the statement follows from the monotonicity of the integral. For $\lambda\in (0,1)$, 
let
\begin{equation*}
    \tau= \frac{t}{2(1-\lambda)}, \quad \text{ and } \quad \sigma= \frac{s}{2\lambda}. 
\end{equation*}
Then, 
\begin{equation*}
    \frac{(1-\lambda)s}{\lambda t+ (1-\lambda) s}= \frac{(1-\lambda) 2\lambda \sigma}{\lambda 2 (1-\lambda) \tau+ (1-\lambda) 2\lambda \sigma}= \frac{\sigma}{\tau+ \sigma}, 
\end{equation*}
and
\begin{equation*}
    \frac{\lambda t}{\lambda t+ (1-\lambda)s}= \frac{\lambda 2 (1-\lambda)\tau}{\lambda 2 (1-\lambda)\tau+ (1-\lambda) 2\lambda \sigma}= \frac{\tau}{\tau+ \sigma}.
\end{equation*}
Therefore, if
\begin{equation*}
    f(\tau)\defeq F(2(1-\lambda)\tau), \quad \text{ and } \quad g(\sigma)\defeq G(2\lambda \sigma), 
\end{equation*}
then
\begin{equation*}
    H(r)\geq f(\tau)^{\frac{\sigma}{\tau+\sigma}} g(\sigma)^{\frac{\tau}{\tau+\sigma}}, \text{ for all } \quad \frac{2}{r}= \frac{1}{\tau}+\frac{1}{\sigma}.  
\end{equation*}
By Theorem \ref{BallIneq}, for all $q\geq 1$, 
\begin{equation*}
\begin{aligned}
    &\left(\int_0^\infty r^{q-1} H(r)\dif r \right)^{-\frac1q}\\
    &\leq \frac12    \left(\int_0^\infty \tau^{q-1} f(\tau)\dif \tau \right)^{-\frac1q}+ \frac12    \left(\int_0^\infty \sigma^{q-1} g(\sigma)\dif \sigma \right)^{-\frac1q}\\
    &= \frac12   \left(\int_0^\infty \tau^{q-1} F(2(1-\lambda)\tau)\dif \tau \right)^{-\frac1q}+ \frac12    \left(\int_0^\infty \sigma^{q-1} G(2\lambda \sigma)\dif \sigma \right)^{-\frac1q}\\
    &= \frac12   \left(\int_0^\infty \left( \frac{t}{2(1-\lambda)}\right)^{q-1} F(t)\frac{\dif t}{2(1-\lambda)} \right)^{-\frac1q} + \frac12    \left(\int_0^\infty \left(\frac{s}{2\lambda}\right)^{q-1} G(s)\frac{\dif s}{2\lambda} \right)^{-\frac1q}\\
    &=  (1-\lambda) \left(\int_0^\infty t^{q-1} F(t)\dif t \right)^{-\frac1q}+ \lambda \left(\int_0^\infty s^{q-1} G(s)\dif s \right)^{-\frac1q},
\end{aligned}
\end{equation*}
concluding the proof.  
\end{proof}

\subsection{\texorpdfstring{Monotonicity of the volume of the slices of the $L^p$-polar under Steiner symmetrization}{Monotonicity of the volume of the slices of the Lᵖ-polar under Steiner symmetrization}}
\label{SteinerMonotonicitySection}
The method of proof for Lemma \ref{VolumeLemma} is standard: compare the volume of the slices of $K^\circ$ and $(\sigma_uK)^{\circ,p}$ normal to the direction of the symmetrization. For a convex body $K\subset \R^n$ let
\begin{equation}\label{sliceDef}
    K(x_n)\defeq \{\xi\in \R^{n-1}: (\xi, x_n)\in K\}, 
\end{equation}
the `slice' of $K$ at height $x_n$. By Tonelli's theorem \cite[\S 2.37]{folland}, 
\begin{equation}\label{sliceVolumeIntegral}
    |K|= \int_K \dif x= \int_{\{(\xi,x_n)\in \R^{n-1}\times \R: \xi\in K(x_n)\}}\dif \xi\dif x_n= \int_{-\infty}^{\infty} |K(x_n)|\dif x_n. 
\end{equation}
Therefore, to obtain Lemma \ref{VolumeLemma}, we will use the following. 
\begin{lemma}
\label{volumeSliceComparison}
Let $p\in [0,\infty)$. Let, also, $\lambda\in (0,1)$ and $K\subset \R^n$ a convex body for which $e_n^\perp$ $\lambda$-separates $K^{\circ,p}$. Then, 
\begin{equation*}
    |(\sigma_{e_n}K)^{\circ,p}(r)|\geq |K^{\circ,p}(t)|^{\frac{s}{t+s}} |K^{\circ,p}(-s)|^{\frac{t}{t+s}}, 
\end{equation*}
for all $r,t,s>0$ with $\frac{2}{r}=\frac{1}{t}+\frac{1}{s}$. 
\end{lemma}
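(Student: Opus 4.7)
\emph{Plan.} My proof combines three ingredients: (i) a set-inclusion for slices of the $L^p$-polars, (ii) the Brunn--Minkowski inequality in $\R^{n-1}$, and (iii) the weighted AM--GM inequality. The slice-inclusion is extracted from a pointwise near-norm bound, which itself comes from Lemma \ref{technicalLemma}'s pointwise $L^p$-support-function inequality by applying Ball's Theorem \ref{BallIneq} (with $q=n$) to the integrals in the near-norm formula \eqref{KcircpNorm}.

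\emph{Step 1 (Slice-inclusion via a near-norm bound).} Set $\alpha:=s/(t+s)$, so that $1-\alpha = t/(t+s)$ and $r = 2st/(s+t)$. The target inclusion is
$$\alpha\,K^{\circ,p}(t) + (1-\alpha)\,K^{\circ,p}(-s)\subset (\sigma_{e_n}K)^{\circ,p}(r),$$
which follows immediately, via the definition of the $L^p$-polar as the unit sub-level set of its near-norm, from the pointwise near-norm bound
$$\bigl\|\bigl(\alpha\eta_1+(1-\alpha)\eta_2,\,r\bigr)\bigr\|_{(\sigma_{e_n}K)^{\circ,p}} \leq \tfrac{1}{2}\|(\eta_1,t)\|_{K^{\circ,p}}+\tfrac{1}{2}\|(\eta_2,-s)\|_{K^{\circ,p}}$$
for all $\eta_1,\eta_2\in\R^{n-1}$. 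To prove this bound I would apply Ball's Theorem \ref{BallIneq} with $q=n$ to the functions $F(\rho)=e^{-h_{p,K}(\rho\eta_1,\,\rho t)}$, $G(\rho)=e^{-h_{p,K}(\rho\eta_2,\,-\rho s)}$, and $H(\rho)=e^{-h_{p,\sigma_{e_n}K}(\rho(\alpha\eta_1+(1-\alpha)\eta_2),\,\rho r)}$; the three resulting integrals $\int_0^\infty\rho^{n-1}F,\int_0^\infty\rho^{n-1}G,\int_0^\infty\rho^{n-1}H$ are precisely $(n-1)!$ times the $-n$-th powers of the three near-norms appearing above. The hypothesis $H(r')\geq F(t')^{s'/(t'+s')}G(s')^{t'/(t'+s')}$ whenever $2/r'=1/t'+1/s'$ translates into a pointwise inequality on $h_{p,\sigma_{e_n}K}$ against a weighted combination of $h_{p,K}$-values along a harmonic-mean triple, and this is exactly what Lemma \ref{technicalLemma} is designed to supply.

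\emph{Step 2 (Brunn--Minkowski and AM--GM).} Given the slice-inclusion, Brunn--Minkowski in $\R^{n-1}$ yields
$$|(\sigma_{e_n}K)^{\circ,p}(r)|^{1/(n-1)}\geq \alpha|K^{\circ,p}(t)|^{1/(n-1)}+(1-\alpha)|K^{\circ,p}(-s)|^{1/(n-1)},$$
and weighted AM--GM with weights $(\alpha,1-\alpha)$ converts the right-hand side to $|K^{\circ,p}(t)|^{\alpha/(n-1)}|K^{\circ,p}(-s)|^{(1-\alpha)/(n-1)}$. Raising to the $(n-1)$-th power recovers the claim, with exponents $s/(t+s)$ and $t/(t+s)$.

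\emph{Sanity check and main obstacle.} For $p=\infty$ the near-norm equals $h_K$, and the slice-inclusion can be verified by a direct support-function computation: writing $K$ via its upper and lower envelopes $a(\xi),b(\xi)$ over $\pi_{e_n^\perp}K$, one reads off $K^\circ(t)=\{\eta:\langle\xi,\eta\rangle\leq 1-ta(\xi)\}$, $K^\circ(-s)=\{\eta:\langle\xi,\eta\rangle\leq 1-sb(\xi)\}$, and $(\sigma_{e_n}K)^\circ(r)=\{\eta:\langle\xi,\eta\rangle\leq 1-(r/2)(a(\xi)+b(\xi))\}$; the inclusion is then the identity $\alpha t\,a(\xi)+(1-\alpha)s\,b(\xi)=(r/2)(a(\xi)+b(\xi))$. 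The main obstacle is establishing the finite-$p$ pointwise support-function bound: because $h_{p,K}$ is not homogeneous, the harmonic-mean scaling built into Ball's theorem must be carefully aligned with the radial integration in \eqref{KcircpNorm}, and verifying the bound uniformly over every harmonic-mean triple is the delicate computation deferred to Lemma \ref{technicalLemma} in \S\ref{SupportIneqSection}.
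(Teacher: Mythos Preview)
Your overall strategy---prove a near-norm inequality, deduce the slice inclusion \eqref{SetInclusion}, then apply Brunn--Minkowski and AM--GM---matches the paper exactly. The gap is in Step~1: the near-norm inequality you write down, with coefficients $\tfrac12,\tfrac12$ on the right, is \emph{false} in general, and Lemma~\ref{technicalLemma} does not supply the hypothesis of Theorem~\ref{BallIneq} for your choice of $F,G,H$.

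Concretely, take $p=\infty$ and $K=[-1,1]\times[0,1]\subset\R^2$, so $\sigma_{e_2}K=[-1,1]\times[-\tfrac12,\tfrac12]$. With $\eta_1=\eta_2=0$, $t=1$, $s=3$ (hence $r=3/2$), your inequality reads $h_{\sigma_{e_2}K}(0,r)\le\tfrac12 h_K(0,t)+\tfrac12 h_K(0,-s)$, i.e.\ $r/2\le t/2$, i.e.\ $3/4\le 1/2$, which fails. The reason is that the hypothesis of Theorem~\ref{BallIneq} asks for
\[
h_{p,\sigma_{e_n}K}\bigl(r'(\alpha\eta_1+(1-\alpha)\eta_2,r)\bigr)\le \tfrac{s'}{t'+s'}h_{p,K}(t'(\eta_1,t))+\tfrac{t'}{t'+s'}h_{p,K}(s'(\eta_2,-s))
\]
whenever $2/r'=1/t'+1/s'$, whereas Lemma~\ref{technicalLemma} gives the analogous bound only along the \emph{tilted} harmonic-mean relation $1/\gamma=(1-\tau)/\alpha'+\tau/\beta'$ with $\tau=t/(t+s)$, and with correspondingly tilted weights $\tfrac{(1-\tau)\beta'}{\tau\alpha'+(1-\tau)\beta'},\tfrac{\tau\alpha'}{\tau\alpha'+(1-\tau)\beta'}$. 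The two conditions coincide only when $\tau=1/2$, i.e.\ $t=s$.

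The fix is exactly what the paper does: feed Lemma~\ref{technicalLemma} into Corollary~\ref{GeneralizedBall} (the weighted extension of Ball's theorem) with $\lambda=\tau$, which yields the correct near-norm bound
\[
\bigl\|(\alpha\eta_1+(1-\alpha)\eta_2,\,r)\bigr\|_{(\sigma_{e_n}K)^{\circ,p}}\le \alpha\,\|(\eta_1,t)\|_{K^{\circ,p}}+(1-\alpha)\,\|(\eta_2,-s)\|_{K^{\circ,p}},
\]
with $\alpha=s/(t+s)$. This inequality is true (and tight in the example above), and it still gives the slice inclusion you need. Your Step~2 then goes through unchanged.
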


To prove Lemma \ref{volumeSliceComparison} we apply the multiplicative Brunn--Minkowski inequality on
\begin{equation}
\label{SetInclusion}
    \frac{s}{t+s} K^{\circ,p}(t)+ \frac{t}{t+s} K^{\circ,p}(-s)\subset (\sigma_{e_n}K)^{\circ,p}(r), \quad \text{ for all } \quad \frac{2}{r}=\frac{1}{t}+\frac1s,
\end{equation}
Before proving \eqref{SetInclusion}, note that the
symmetry of $((\sigma_{e_n}K)^{\circ,p}$ with respect to $e_n^\perp$ (Lemma \ref{LpPolarSteinerSym}), allows for 
the right-hand side of \eqref{SetInclusion} to be written as 
\begin{equation*}
    (\sigma_{e_nK})^{\circ,p}(r)=\frac{s}{t+s} (\sigma_{e_n}K)^{\circ,p}(t)+ \frac{t}{t+s} (\sigma_{e_n}K)^{\circ,p}(-s),
\end{equation*}
because $(\sigma_{e_n}K)^{\circ,p}(-s)= (\sigma_{e_n}K)^{\circ,p}(s)$, and $r=\frac{2ts}{t+s}$. Thus, \eqref{SetInclusion} is a monotonicity property for the slices of $K^{\circ,p}$ under Steiner symmetrization. 

The proof of \eqref{SetInclusion} follows from an inequality of norms
\begin{equation}\label{normInclusion}
    \left\|\left( \frac{s}{t+s}\xi+\frac{t}{t+s}\xi', r\right)\right\|_{(\sigma_{e_n}K)^{\circ,p}}\leq \frac{s}{t+s}\|(\xi,t)\|_{K^{\circ,p}}+ \frac{t}{t+s}\|(\xi', -s)\|_{K^{\circ,p}},
\end{equation}
for all $r,t,s>0$ with $\frac2r= \frac1t+\frac1s$.
Now, to prove \eqref{normInclusion}, we use Corollary \ref{GeneralizedBall} on the $L^p$-support functions, hence we require the next lemma proved in \S \ref{SupportIneqSection} below.  

\begin{lemma}
\label{technicalLemma}
    Let $p\in (0,\infty]$, $\xi,\xi'\in \R^{n-1}$, $t,s,r>0$ with $\frac2r=\frac1t+\frac1s$, and $\tau\defeq \frac{t}{t+s}$. For all $\alpha,\beta,\gamma>0$ with $\frac{1}{\gamma}=\frac{1-\tau}{\alpha}+\frac{\tau}{\beta}$, 
    \begin{equation*}
        \begin{aligned}
            &h_{p,\sigma_{e_n}K}\Big( \gamma((1-\tau) \xi +\tau \xi', r)\Big)\\
            &\leq \frac{(1-\tau)\beta}{\tau \alpha+ (1-\tau)\beta} h_{p, K}(\alpha(\xi, t))
            + \frac{\tau \alpha}{\tau\alpha+ (1-\tau)\beta} h_{p,K}(\beta(\xi', -s)).
        \end{aligned}
    \end{equation*}
\end{lemma}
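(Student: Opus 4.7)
The plan is to recast the inequality as an integral inequality and apply H\"older's inequality twice: once slice-wise along $e_n$, giving log-convexity of a moment-generating type integral, and once on the outer integral over $\pi_{e_n^\perp}K$. Since $|\sigma_{e_n}K|=|K|$ (Lemma \ref{SteinerProperties}), applying $\exp(p\,\cdot)$ to both sides reduces the claim to
\[ \int_{\sigma_{e_n}K}e^{p\gamma\langle x,((1-\tau)\xi+\tau\xi',r)\rangle}\dif x\leq\left(\int_K e^{p\alpha\langle x,(\xi,t)\rangle}\dif x\right)^{1-\mu}\left(\int_K e^{p\beta\langle x,(\xi',-s)\rangle}\dif x\right)^{\mu}, \]
where $\mu := \tau\alpha/(\tau\alpha+(1-\tau)\beta)$. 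For $z\in\pi_{e_n^\perp}K$, write $K\cap(z+\R e_n)=\{(z,v):v\in[a(z),b(z)]\}$ with midpoint $m(z)$ and half-length $f(z)$, so that $\sigma_{e_n}K\cap(z+\R e_n)=\{(z,u):u\in[-f(z),f(z)]\}$. Setting $\Lambda(z,c):=\int_{-f(z)}^{f(z)}e^{pcu}\dif u$ (even in $c$), Fubini expresses the three integrals above as $\int e^{p\gamma\langle z,(1-\tau)\xi+\tau\xi'\rangle}\Lambda(z,\gamma r)\dif z$, $\int e^{p\alpha\langle z,\xi\rangle+p\alpha tm(z)}\Lambda(z,\alpha t)\dif z$, and $\int e^{p\beta\langle z,\xi'\rangle-p\beta sm(z)}\Lambda(z,\beta s)\dif z$, all over $\pi_{e_n^\perp}K$.

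Two arithmetic identities, both immediate from $\tau=t/(t+s)$, $2/r=1/t+1/s$, and $1/\gamma=(1-\tau)/\alpha+\tau/\beta$, drive the proof: $(1-\mu)\alpha\xi+\mu\beta\xi' = \gamma((1-\tau)\xi+\tau\xi')$ and $(1-\mu)\alpha t = \mu\beta s = \gamma r/2$. Defining $F(z) := e^{p\alpha\langle z,\xi\rangle+p\alpha tm(z)}\Lambda(z,\alpha t)$ and $G(z) := e^{p\beta\langle z,\xi'\rangle-p\beta sm(z)}\Lambda(z,\beta s)$, these identities produce
\[ F(z)^{1-\mu}G(z)^{\mu}=e^{p\gamma\langle z,(1-\tau)\xi+\tau\xi'\rangle}\Lambda(z,\alpha t)^{1-\mu}\Lambda(z,\beta s)^{\mu}, \]
the $m(z)$-contributions canceling exactly. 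H\"older's inequality on the outer integral then gives $\int F^{1-\mu}G^{\mu}\dif z \leq(\int F\,\dif z)^{1-\mu}(\int G\,\dif z)^{\mu}$, matching the RHS of the integral inequality.

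It remains to establish the slice-wise log-convexity $\Lambda(z,\gamma r)\leq\Lambda(z,\alpha t)^{1-\mu}\Lambda(z,\beta s)^{\mu}$. Since $\gamma r = (1-\mu)\alpha t + \mu\beta s$, this is log-convexity of $c\mapsto\Lambda(z,c)$, which follows from H\"older applied to $e^{p(1-\mu)\alpha tu}\cdot e^{p\mu\beta su}$ on $[-f(z),f(z)]$ (equivalently, cumulant generating functions of the uniform distribution are convex). Chaining these inequalities yields the integral inequality, and hence the lemma for finite $p$. For $p=\infty$, the $1$-homogeneity of $h_K$ reduces the claim to $h_{\sigma_{e_n}K}(\eta+\eta',2A)\leq h_K(\eta,A)+h_K(\eta',-A)$ for all $\eta,\eta'\in\R^{n-1}$ and $A>0$, which is pointwise: for any $(z,u)\in\sigma_{e_n}K$, both $(z,m(z)+u)$ and $(z,m(z)-u)$ lie in $K$, so $\langle z,\eta+\eta'\rangle+2Au = \langle(z,m(z)+u),(\eta,A)\rangle + \langle(z,m(z)-u),(\eta',-A)\rangle\leq h_K(\eta,A)+h_K(\eta',-A)$. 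The main obstacle is the arithmetic bookkeeping ensuring the $m(z)$ offsets cancel and the spectral parameter $\gamma r$ equals $(1-\mu)\alpha t+\mu\beta s$; once those identities are verified, the proof is just two H\"older estimates.
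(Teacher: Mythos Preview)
Your proof is correct and follows essentially the same route as the paper's: both express the integrals via Fubini, with the slice integral $\Lambda(z,c)=\int_{-f(z)}^{f(z)}e^{pcu}\dif u$ (which the paper writes in closed form as $J(\eta,c)=2\sinh(pcf)/(pc)$), apply H\"older to the outer integral over $\pi_{e_n^\perp}K$, and rely on the identity $(1-\mu)\alpha t=\mu\beta s$ to make the midpoint $m(z)$ cancel. The one substantive difference is how the slice log-convexity $\Lambda(z,\gamma r)\leq\Lambda(z,\alpha t)^{1-\mu}\Lambda(z,\beta s)^{\mu}$ is established: you invoke H\"older directly on $e^{p(1-\mu)\alpha tu}\cdot e^{p\mu\beta su}$ (i.e., convexity of the cumulant generating function), whereas the paper computes the second derivative of $t\mapsto\log(\sinh(tx)/t)$ by hand; your argument is cleaner and computation-free. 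You also handle $p=\infty$ explicitly, which the paper's proof does not.
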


Indeed, applying Corollary \ref{GeneralizedBall} with $q=n$ on Lemma \ref{technicalLemma} gives
\begin{equation*}
    \begin{aligned}
       \Big(&(n-1)!\Big)^{-\frac1n}\|((1-\tau)\xi+\tau\xi', r)\|_{(\sigma_{e_n}K)^{\circ,p}}\\
       &=\left(\int_{0}^\infty \gamma^{n-1} e^{-h_{p,\sigma_{e_n}K} (\gamma ((1-\tau)\xi+\tau\xi', r))}\dif \gamma \right)^{-\frac1n} \\
       &\leq (1-\tau) \left(\int_{0}^\infty \alpha^{n-1} e^{-h_{p,K}(\alpha(\xi,t))}\dif \alpha \right)^{-\frac1n}+ \tau \left( \int_{0}^\infty\beta^{n-1} e^{-h_{p,K}(\beta(\xi',s))}\dif \beta\right)^{-\frac1n} \\
       &= \Big((n-1)!\Big)^{-\frac1n} (1-\tau)\|(\xi,t)\|_{K^{\circ,p}}+ \Big((n-1)!\Big)^{-\frac1n} \tau \|(\xi',s)\|_{K^{\circ,p}}, 
    \end{aligned}
\end{equation*}
proving \eqref{normInclusion}, because $\tau= \frac{t}{t+s}$.

For \eqref{SetInclusion}, given $\xi\in K^{\circ,p}(t)$ and $\xi'\in K^{\circ,p}(-s)$, by definition \eqref{sliceDef}, $(\xi, t)\in K^{\circ,p}$ and $(\xi',-s)\in K^{\circ,p}$, i.e., $\|(\xi,t)\|_{K^{\circ,p}}\leq 1$ and $\|(\xi', -s)\|_{K^{\circ,p}}\leq 1$. By \eqref{normInclusion}, since $r=\frac{2ts}{t+s}$,
\begin{equation*}
\begin{aligned}
    \left\| \left( \frac{s}{t+s}\xi+ \frac{t}{t+s}\xi', r\right)\right\|_{(\sigma_{e_n}K)^{\circ,p}}&\leq \frac{s}{t+s} \|(\xi, t)\|_{K^{\circ,p}}+ \frac{t}{t+s}\|(\xi',-s)\|_{K^{\circ,p}}\\
    &\leq \frac{s}{t+s}+ \frac{t}{t+s}=1
\end{aligned}
\end{equation*}
thus $(\frac{s}{t+s}\xi+ \frac{t}{t+s}\xi', r)\in (\sigma_{e_n}K)^{\circ,p}$ and $\frac{s}{t+s}\xi+ \frac{t}{t+s}\xi'\in (\sigma_{e_n}K)^{\circ,p}(r)$.

\begin{proof}[Proof of Lemma \ref{volumeSliceComparison}]
    By the multiplicative Brunn--Minkowski inequality applied on \eqref{SetInclusion}, 
    \begin{equation*}
    \begin{aligned}
        |(\sigma_{e_n}K)^{\circ,p}(r)|\geq \left| \frac{s}{t+s} K^{\circ,p}(t)+ \frac{t}{t+s} K^{\circ,p}(-s)\right|\geq |K^{\circ,p}(t)|^{\frac{s}{t+s}} |K^{\circ,p}(-s)|^{\frac{t}{t+s}}.
    \end{aligned}
    \end{equation*}
\end{proof}

\begin{proof}[Proof of Lemma \ref{VolumeLemma}]
To begin with, take $u=e_n$.
Let
\begin{equation*}
    \begin{gathered}
        F(t)\defeq |K^{\circ,p}(t)|, \quad t\in (0,\infty), \\
        G(s)\defeq |K^{\circ,p}(-s)|, \quad s\in (0,\infty), \\
        H(r)\defeq |(\sigma_{e_n}K)^{\circ,p}(r)|, \quad r\in (0,\infty).
    \end{gathered}
\end{equation*}
By Lemma \ref{volumeSliceComparison},  $H(r)\geq F(t)^{\frac{s}{t+s}} G(s)^{\frac{t}{t+s}}$, for all $t,s,r>0$ with $\frac2r=\frac1t+\frac1s$. Therefore, by Theorem \ref{BallIneq} for $q=1$, and \eqref{sliceVolumeIntegral},
{\allowdisplaybreaks
\begin{align*}
    \frac{4}{|(\sigma_{e_n}K)^{\circ,p}|}=
    \frac{2}{\int_0^\infty |(\sigma_{e_n}K)^{\circ,p}(r)|\dif r}&\leq \frac{1}{\int_0^\infty |K^{\circ,p}(t)|\dif t}+ \frac{1}{\int_0^\infty |K^{\circ,p}(-s)|\dif s} \\
    &= \frac{1}{\int_0^\infty |K^{\circ,p}(t)|\dif t}+ \frac{1}{\int_{-\infty}^0 |K^{\circ,p}(s)|\dif s} \\
    &= \frac{1}{|K^{\circ,p}\cap (e_n^\perp)^+|}+ \frac{1}{|K^{\circ,p}\cap (e_n^\perp)^-|}\\
    &= \frac{|K^{\circ,p}\cap (e_n^\perp)^+|+ |K^{\circ,p}\cap (e_n^\perp)^-|}{|K^{\circ,p}\cap (e_n^\perp)^+||K^{\circ,p}\cap (e_n^\perp)^-|} \\
    &= \frac{|K^{\circ,p}|}{|K^{\circ,p}\cap (e_n^\perp)^+||K^{\circ,p}\cap (e_n^\perp)^-|} \\
    &= \frac{1}{\lambda(1-\lambda)|K^{\circ,p}|}, 
\end{align*}
}
the last equality is because, by assumption, $e_n^\perp$ $\lambda$-separates $K^{\circ,p}$.

In general, let $A\in O(n)$ such that $u= Ae_n$. If $u$ $\lambda$-separates $K^{\circ,p}$, then $Au= e_n$ $\lambda$-separates $(AK)^{\circ,p}$. Indeed, if $|K^{\circ,p}\cap (u^\perp)^+|= \lambda |K^{\circ,p}|$, then, by \eqref{KcircpGLnR} and \eqref{uperpAffine},
\begin{equation*}
    (AK)^{\circ,p}\cap ((Au)^\perp)^+= \Big( (A^{-1})^T K^{\circ,p}\Big)\cap \Big((A^{-1})^T (u^\perp)^+\Big)= (A^{-1})^T (K^{\circ,p}\cap (u^\perp)^+).
\end{equation*}
Thus, $|(AK)^{\circ,p}\cap ((Au)^\perp)^+|= |\det A| |K^{\circ,p} \cap (u^\perp)^+|= |\det A| \lambda |K^{\circ,p}|= \lambda |(AK)^{\circ,p}|$, i.e., $Au$ $\lambda$-separates $(AK)^{\circ,p}$. By Lemma \ref{steiner_orthogonal}, since $Au= e_n$, and $e_n$ $\lambda$-separates $(AK)^{\circ,p}$,
\begin{equation*}
\begin{aligned}
    |(\sigma_{u}K)^{\circ,p}|&= |(A^{-1}\sigma_{e_n}(AK))^{\circ,p}|\\
    &= |A^T (\sigma_{e_n}(AK))^{\circ,p}|\\
    &= |\det A| |(\sigma_{e_n}(AK))^{\circ,p}| \\
    &\geq 4\lambda(1-\lambda) |\det A| |(AK)^{\circ,p}|\\
    &= 4\lambda (1-\lambda) |K^{\circ,p}|,
\end{aligned}
\end{equation*}
because by \eqref{KcircpGLnR}, $|\det A||(AK)^{\circ,p}|= |\det A||(A^{-1})^T K^{\circ,p}|= |K^{\circ,p}|$. 
\end{proof}

\subsection{\texorpdfstring{An inequality for the $L^p$-support functions}{An inequality for the Lp-support function}}
\label{SupportIneqSection}

\begin{proof}[Proof of Lemma \ref{technicalLemma}]
    Let $f,g: \pi_{e_n^\perp}(K)\to \R, g\leq f,$ such that
    \begin{equation*}
        K= \{(\xi,x_n)\in \R^{n-1}\times \R: (\xi,0)\in \pi_{e_n^\perp}(K) \text{ and } g(\xi)\leq x_n\leq f(\xi)\}. 
    \end{equation*}
    Then,
    \begin{equation*}
        \sigma_{e_n^\perp}K= \left\{(\xi,x_n)\in \R^{n-1}\times \R: (\xi,0)\in \pi_{e_n^\perp}(K)\text{ and } |x_n|\leq \frac{f(\xi)-g(\xi)}{2} \right\}.
    \end{equation*}
    Let
\begin{equation*}
    J(\eta, y_n)\defeq \frac{2\sinh(p y_n\frac{f(\eta)-g(\eta)}{2})}{p y_n}, \quad \eta\in \R^{n-1}, y_n\in \R. 
\end{equation*}
    Compute,
    \begin{equation}\label{Eq1}
        \begin{aligned}
            &h_{p, \sigma_{e_n}K}(\gamma((1-\tau)\xi+\tau \xi', r))\\
            &=\frac1p \log\int_{\pi_{e_n^\perp}(K)}\int_{-\frac{f(\eta)-g(\eta)}{2}}^{\frac{f(\eta)-g(\eta)}{2}} e^{p\gamma\langle (1-\tau)\xi+ \tau \xi', \eta\rangle} e^{p\gamma y_n r}\frac{\dif y_n\dif \eta}{|\sigma_{e_n}K|}\\
            &= \frac1p \log\int_{\pi_{e_n^\perp}(K)} e^{p\gamma (1-\tau)\langle \xi, \eta\rangle+ p\gamma\tau \langle \xi',\eta\rangle} \frac{1}{p\gamma r} (e^{p\gamma r \frac{f(\eta)-g(\eta)}{2}}- e^{-p\gamma r\frac{f(\eta)-g(\eta)}{2}})\frac{\dif \eta}{|K|} \\
            &= \frac1p \log\int_{\pi_{e_n^\perp}(K)} e^{p\gamma (1-\tau)\langle \xi, \eta\rangle+ p\gamma\tau \langle \xi',\eta\rangle} \frac{2\sinh(p\gamma r\frac{f(\eta)-g(\eta)}{2})}{p\gamma r} \frac{\dif \eta}{|K|}\\
            &= \frac1p \log\int_{\pi_{e_n^\perp}(K)} e^{p\gamma (1-\tau)\langle \xi, \eta\rangle+ p\gamma\tau \langle \xi',\eta\rangle} J(\eta, \gamma r) \frac{\dif \eta}{|K|}.
        \end{aligned}
    \end{equation}
Also, 
\begin{equation}\label{Eq2}
    \begin{aligned}
        h_{p,K}(\alpha(\xi,t))&= \frac1p\log\int_{\pi_{e_n^\perp}(K)}\int_{g(\eta)}^{f(\eta)} e^{p\alpha\langle \xi,\eta\rangle} e^{p\alpha ty_n}\frac{\dif y_n\dif\eta}{|K|} \\
        &= \frac1p \log\int_{\pi_{e_n^\perp}(K)} e^{p\alpha\langle \xi,\eta\rangle} \frac{1}{p\alpha t} (e^{p\alpha t f(\eta)}- e^{p\alpha t g(\eta)})\frac{\dif \eta}{|K|} \\
        &= \frac1p \log\int_{\pi_{e_n^\perp}(K)} e^{p\alpha\langle \xi,\eta\rangle} e^{p\alpha t\frac{f(\eta)+g(\eta)}{2}} \frac{2\sinh(p\alpha t\frac{f(\eta)-g(\eta)}{2})}{p\alpha t} \frac{\dif \eta}{|K|}  \\
        &= \frac1p \log\int_{\pi_{e_n^\perp}(K)} e^{p\alpha\langle \xi,\eta\rangle} e^{p\alpha t\frac{f(\eta)+g(\eta)}{2}} J(\eta, \alpha t)\frac{\dif \eta}{|K|}.
    \end{aligned}
\end{equation}
Similarly, 
\begin{equation}\label{Eq3}
    h_{p,K}(\beta(\xi',-s))=\frac1p \log\int_{\pi_{e_n^\perp}(K)} e^{p\beta\langle \xi',\eta\rangle} e^{-p\beta s\frac{f(\eta)+g(\eta)}{2}} J(\eta, \beta s) \frac{\dif \eta}{|K|}, 
\end{equation}
because $J$ is even in the second variable. By \eqref{Eq2}, \eqref{Eq3}, and H\"older's inequality, 
\begin{equation}\label{weirdBigIneq}
\begin{aligned}
        &\frac{(1-\tau)\beta}{\tau\alpha+ (1-\tau)\beta} h_{p,K}(\alpha(\xi,t))+ \frac{\tau\alpha}{\tau\alpha+(1-\tau)\beta}h_{p,K}(\beta(\xi',-s)) \\
        &= \frac1p \log\left[ \left( \int_{\pi_{e_n^\perp}(K)} e^{p\alpha\langle \xi,\eta\rangle} e^{p\alpha t\frac{f(\eta)+g(\eta)}{2}} J(\eta, \alpha t)\frac{\dif \eta}{|K|}\right)^\frac{(1-\tau)\beta}{\tau\alpha+ (1-\tau)\beta} \right.
        \\
        &\left.
        \hspace{1.6cm}
        \left( \int_{\pi_{e_n^\perp}(K)} e^{p\beta\langle \xi',\eta\rangle} e^{-p\beta s\frac{f(\eta)+g(\eta)}{2}} J(\eta, \beta s) \frac{\dif \eta}{|K|}\right)^{\frac{\tau\alpha}{\tau\alpha+(1-\tau)\beta}}
        \right] \\
        &\geq \frac1p \log \int_{\pi_{e_n^\perp}(K)} e^{p\frac{(1-\tau)\alpha \beta}{\tau\alpha+ (1-\tau)\beta}\langle \xi,\eta\rangle} e^{p\frac{(1-\tau)\alpha\beta}{\tau\alpha+ (1-\tau)\beta} t\frac{f(\eta)+g(\eta)}{2}} J(\eta, \alpha t)^{\frac{(1-\tau)\beta}{\tau\alpha+ (1-\tau)\beta}} \\
        &\hspace{2.75cm} e^{p{\frac{\tau\alpha\beta}{\tau\alpha+(1-\tau)\beta}}\langle \xi',\eta\rangle} e^{-p{\frac{\tau\alpha\beta}{\tau\alpha+(1-\tau)\beta}} s\frac{f(\eta)+g(\eta)}{2}} J(\eta, \beta s)^{{\frac{\tau\alpha}{\tau\alpha+(1-\tau)\beta}}}\frac{\dif\eta}{|K|} \\
        &= \frac1p \log \int_{\pi_{e_n^\perp}(K)} e^{p\gamma (1-\tau)\langle \xi,\eta\rangle} e^{p\gamma(1-\tau) t\frac{f(\eta)+g(\eta)}{2}} J(\eta, \alpha t)^{\frac{(1-\tau)\beta}{\tau\alpha+ (1-\tau)\beta}}\\
        &\hspace{2.75cm} e^{p\gamma\tau\langle \xi',\eta\rangle} e^{-p \gamma\tau s\frac{f(\eta)+g(\eta)}{2}} J(\eta, \beta s)^{\frac{\tau\alpha}{\tau\alpha+(1-\tau)\beta}}\frac{\dif\eta}{|K|} \\
        &= \frac1p \log \int_{\pi_{e_n^\perp}(K)} e^{p\gamma (1-\tau)\langle \xi,\eta\rangle+p\gamma\tau\langle \xi',\eta\rangle} e^{p\gamma((1-\tau) t-\tau s)\frac{f(\eta)+g(\eta)}{2}} J(\eta, \alpha t)^{\frac{(1-\tau)\beta}{\tau\alpha+ (1-\tau)\beta}} J(\eta, \beta s)^{\frac{\tau\alpha}{\tau\alpha+(1-\tau)\beta}}\frac{\dif\eta}{|K|} \\
        &=  \frac1p \log \int_{\pi_{e_n^\perp}(K)} e^{p\gamma (1-\tau)\langle \xi,\eta\rangle+p\gamma\tau\langle \xi',\eta\rangle} J(\eta, \alpha t)^{\frac{(1-\tau)\beta}{\tau\alpha+ (1-\tau)\beta}} J(\eta, \beta s)^{\frac{\tau\alpha}{\tau\alpha+(1-\tau)\beta}}\frac{\dif\eta}{|K|}
\end{aligned}
\end{equation}
because
\begin{equation}\label{gammaEq}
    \gamma= \frac{\alpha\beta}{\tau\alpha+(1-\tau)\beta}, 
\end{equation}
and
\begin{equation*}
    (1-\tau)t- \tau s= \frac{s}{t+s} t- \frac{t}{t+s}s=0. 
\end{equation*}
By the log-concavity of $J$ in the second variable (Claim \ref{sinhLogConvex} below), and \eqref{gammaEq},
\begin{equation}\label{Jineq}
    \begin{aligned}
         J(\eta, \alpha t)^{\frac{(1-\tau)\beta}{\tau\alpha+ (1-\tau)\beta}} J(\eta, \beta s)^{\frac{\tau\alpha}{\tau\alpha+(1-\tau)\beta}}&\geq J\left( \eta, \frac{(1-\tau)\alpha\beta}{\tau\alpha+(1-\tau)\beta}t + \frac{\tau \alpha \beta}{\tau\alpha+(1-\tau)\beta}s\right) \\
         &= J(\eta, (1-\tau)\gamma t+ \tau \gamma s) \\
         &= J\left( \eta, \gamma \frac{ts}{t+s}+ \gamma\frac{ts}{t+s}\right) \\
         &= J\left( \eta, \gamma\frac{2ts}{t+s}\right) \\
         &= J(\eta, \gamma r), 
    \end{aligned}
\end{equation}
because $(1-\tau)= \frac{s}{t+s}, \tau=\frac{t}{t+s}$ and $r=\frac{2ts}{t+s}$. Finally, by \eqref{weirdBigIneq}, \eqref{Jineq}, and \eqref{Eq1}, 
\begin{equation*}
    \begin{aligned}
        &\frac{(1-\tau)\beta}{\tau\alpha+ (1-\tau)\beta} h_{p,K}(\alpha(\xi,t))+ \frac{\tau\alpha}{\tau\alpha+(1-\tau)\beta}h_{p,K}(\beta(\xi',-s)) \\
        &\geq \frac1p \log \int_{\pi_{e_n^\perp}(K)} e^{p\gamma (1-\tau)\langle \xi,\eta\rangle+p\gamma\tau\langle \xi',\eta\rangle} J(\eta, \gamma r)\frac{\dif\eta}{|K|} \\
        &= h_{p,\sigma_{e_n}K} (\gamma((1-\tau)\xi+\tau\xi', r)), 
    \end{aligned}
\end{equation*}
as desired. 
\end{proof}

\begin{claim}\label{sinhLogConvex}
    For any $x>0$, $t\mapsto \log\left(\frac1t \sinh(tx)\right), t>0,$ is convex.
\end{claim}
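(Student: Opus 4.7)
The cleanest route, in my view, is a direct second-derivative computation. Write $F(t) \defeq \log\bigl(\tfrac{1}{t}\sinh(tx)\bigr) = \log\sinh(tx) - \log t$, which is smooth on $(0,\infty)$ since $\sinh(tx)>0$ there. Differentiating once gives $F'(t) = x\coth(tx) - 1/t$. A short computation using $\tfrac{d}{dt}\coth(tx) = -x/\sinh^2(tx)$ yields
\begin{equation*}
F''(t) = \frac{1}{t^2} - \frac{x^2}{\sinh^2(tx)}.
\end{equation*}
So convexity of $F$ on $(0,\infty)$ is equivalent to $\sinh^2(tx) \geq (tx)^2$ for all $t,x>0$, i.e., to the elementary inequality $\sinh u \geq u$ for $u \geq 0$.

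This inequality is immediate from the Taylor expansion $\sinh u = \sum_{k\geq 0} \tfrac{u^{2k+1}}{(2k+1)!}$, whose terms are all non-negative for $u\geq 0$ and whose leading term is $u$; alternatively, $g(u)\defeq \sinh u - u$ satisfies $g(0)=0$ and $g'(u) = \cosh u - 1 \geq 0$.

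An alternative (and perhaps more conceptual) plan, should one prefer to avoid the derivative computation, is to use the identity
\begin{equation*}
\frac{\sinh(tx)}{t} = \int_0^x \cosh(ts)\,ds,
\end{equation*}
together with the observation that for each fixed $s$, the function $t\mapsto \cosh(ts) = \tfrac12(e^{ts}+e^{-ts})$ is log-convex in $t$ (as a sum of log-linear functions). Log-convexity is preserved under integration against a non-negative measure — a direct consequence of Hölder's inequality — so $\tfrac{\sinh(tx)}{t}$ is log-convex in $t$, which is exactly the claim.

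I do not anticipate any real obstacle; the only thing to be slightly careful about is the limit $t \to 0$, but the statement is restricted to $t>0$, so this is not an issue. Either approach above gives the required convexity cleanly.
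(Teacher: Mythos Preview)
Your primary argument is correct and essentially identical to the paper's own proof: both compute $F''(t)=\tfrac{1}{t^2}-\tfrac{x^2}{\sinh^2(tx)}$ and reduce non-negativity to the elementary inequality $\sinh u\geq u$ for $u\geq 0$. Your alternative integral-representation argument is a nice bonus, but the main line matches the paper exactly.
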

\begin{proof}
    Write 
    $$f(t)\defeq \log\left(\frac1t \sinh(tx)\right)= \log(\sinh(tx))-\log t.
    $$ 
    Compute the derivatives, $f'(t)= x\frac{\cosh(tx)}{\sinh(tx)}-\frac1t$ and 
    \begin{equation*}
    \begin{aligned}
        f''(t)&= x^2\frac{\sinh(tx)}{\sinh(tx)}- x^2\frac{(\cosh(tx))^2}{(\sinh(tx))^2}+\frac{1}{t^2}= x^2\left(1- \frac{(\cosh(tx))^2}{(\sinh(tx))^2}+\frac{1}{(tx)^2}\right)\\
        &= x^2\left(1- \frac{1+(\sinh(tx))^2}{(\sinh(tx))^2}+ \frac{1}{(tx)^2}\right)= x^2\left(\frac{1}{(tx)^2}- \frac{1}{(\sinh(tx))^2}\right)\geq 0,
    \end{aligned}
    \end{equation*}
    because $\sinh(y)\geq y$, for all $y\geq 0$.
\end{proof}

\subsection{Finishing the proof}
\label{FinishingSection}

\begin{proof}[Proof of Proposition \ref{SteinerProp}]
Let
\begin{equation*}
    L\defeq K- s_p(\sigma_uK). 
\end{equation*}
By Lemma \ref{SantaloPointLemma}, 
$s_p(\sigma_uK)\in u^\perp$, thus,
by \eqref{perp},
$
    \sigma_u L= (\sigma_u K)- s_p(\sigma_uK). 
$
Therefore, $s_p(\sigma_u L)=0$, and hence, by \eqref{tMpDef}, 
\begin{equation}\label{tMMsigmauL}
    \M_p(\sigma_uL)=\inf_{x\in\R^n}\M_p((\sigma_uL)-x).
\end{equation}
By Lemma \ref{SeparatingLemma}, there is $t\in \R$ such that $u^\perp$ $1/2$-separates $(L-tu)^{\circ,p}$. By Lemma \ref{VolumeLemma} for $\lambda=1/2$, 
\begin{equation}\label{sigmaLeq3}
    |(\sigma_u(L-tu))^{\circ,p}|\geq |(L-tu)^{\circ,p}|. 
\end{equation}
By \eqref{parallel}, $\sigma_u(L-tu)= \sigma_uL$, and hence,
\begin{equation}\label{sigmaLeq2}
(\sigma_u(L-tu))^{\circ,p}= (\sigma_uL)^{\circ,p}.
\end{equation}
By \eqref{MpEq}, \eqref{sigmaLeq3}, and \eqref{sigmaLeq2}, 
\begin{equation}\label{LSLineq}
    \begin{aligned}
       \M_p(L-tu)&= n! |L| |(L-tu)^{\circ,p}|\leq n! |L| |(\sigma_u(L-tu))^{\circ,p}|\\
       &= n! |L||(\sigma_u L)^{\circ,p}|= \M_p(\sigma_uL),
    \end{aligned}
\end{equation}
because $|L|=|\sigma_uL|$ (Lemma \ref{SteinerProperties} below).
Note that $L-tu= K-tu-s_p(\sigma_uK)$ is a translate of $K$, thus
\begin{equation}\label{tMKML}
    \inf_{x\in\R^n}\M_p(K-x)\leq \M_p(L-tu).
\end{equation}
Combining \eqref{tMKML}, \eqref{LSLineq}, and \eqref{tMMsigmauL} proves the claim.
\end{proof}

\begin{proof}[Proof of Theorem \ref{LpSantaloThm}]
  Denote by $e_1, \ldots, e_n$ the standard basis in $\R^n$. Let $K_0\defeq K-s_p(K)$, and
  \begin{equation*}
      K_i\defeq (\sigma_{e_i}K_{i-1})- s_p(K_{i-1})
  \end{equation*}
  for $i=1, \ldots, n$. By \eqref{tMpDef} and Proposition \ref{SteinerProp}, 
  \begin{equation}\label{recursiveEq}
      \inf_{x\in \R^n} \M_p(K-x)= \M_p(K_0)\leq \M_p(K_1)\leq \ldots\leq \M_p(K_n).
  \end{equation}
  By Lemma \ref{SteinerOrthogonal} below, since $e_1, \ldots, e_n$ are all orthogonal to each other, $\sigma_{e_i}K_{i-1}$ is symmetric with respect to $e_1, \ldots, e_i$. By Lemma \ref{SantaloPointLemma}, $s_p(\sigma_{e_i}K_{i-1})\in e_1^\perp\cap\ldots\cap e_i^\perp$, thus $(\sigma_{e_i}K_{i-1})- s_p(K_{i-1})$ is also symmetric with respect to $e_1, \ldots, e_i$. Therefore, $K_n$ is symmetric with respect to all $e_1, \ldots, e_n$, and hence it is a symmetric convex body. By \eqref{recursiveEq} and Theorem \ref{SantaloSym}, $\inf_{x\in \R^n}\M_p(K)\leq \M_p(K_n)\leq \M_p(B_2^n)$.
\end{proof}

\begin{remark}  
For a short proof of 
    Theorem \ref{SantaloSym}, note that for a symmetric convex body $K$, $K^{\circ,p}$ is also symmetric \cite[Theorem 1.2]{BMR}, thus $u^\perp$ $1/2$-separates it. By Lemma \ref{VolumeLemma}, Steiner symmetrization increases its $L^p$-Mahler volume, $\M_p(K)\leq \M_p(\sigma_uK)$. Repeated applications of Steiner symmetrization converge to $B_2^n$ \cite[Theorem 9.1]{gruber}, and hence, by the continuity of $\M_p$ \cite[Lemma 5.11]{BMR}, $\M_p(K)\leq \M_p(B_2^n)$.
\end{remark}

\bigskip
 
{\sc University of Maryland}

{\tt vmastr@umd.edu}

\end{document}